\documentclass[12pt,a4paper,english]{article}
\usepackage[utf8]{inputenc}
\usepackage{babel, fullpage, amsmath, amsthm, amsfonts, amssymb, xcolor, dsfont}
\usepackage{graphicx}
\usepackage{caption}
\newtheorem{theorem}{Theorem}[section]
\newtheorem{corollary}[theorem]{Corollary}
\newtheorem{proposition}[theorem]{Proposition}
\newtheorem{lemma}[theorem]{Lemma}
\newtheorem{lemma-annexe}{Lemma}
\theoremstyle{definition}

\newtheorem{remark}{Remark}[section]
\newcommand{\keywords}{\\

{\bf Keywords : }}
\newcommand{\subclass}{\\

{\bf Mathematics Subject Classification (2000) : }}
\newcommand{\dsp}{\displaystyle}

\newcommand{\Lb}{\mathbb{L}}

\newcommand{\Pz}{\left[\begin{matrix}p_0 \\ p_1\end{matrix}\right]}

\newcommand{\T}{\mathbb{T}}

\newcommand{\D}{\mathcal{D}\left(A\right)}
\newcommand{\Ds}{\mathcal{D}\left(A^2\right)}
\newcommand{\Dz}{\mathcal{D}\left(A_0\right)}
\newcommand{\Dzs}{\mathcal{D}\left(A_0^2\right)}
\newcommand{\Dzdemi}{\mathcal{D}\left(A_0^\frac{1}{2}\right)}
\newcommand{\Dztdemi}{\mathcal{D}\left(A_0^\frac{3}{2}\right)}
\newcommand{\FORALL} {{\hbox{$\hskip 11mm \forall \;$}}}
\author{Ghislain HAINE\\
{\small Universit\'e Henri Poincar\'e (Institut \'Elie Cartan)}\\
{\small B.P. 70239, 54506 Vandoeuvre-les-Nancy, France}\\
{\small Ghislain.Haine@iecn.u-nancy.fr}
\and Karim RAMDANI\\
{\small INRIA Nancy Grand-Est (CORIDA)}\\
{\small 615 rue du Jardin Botanique, 54600 Villers-les-Nancy, France}\\
{\small Karim.Ramdani@inria.fr}
}
\date{}
\title{\textbf{Reconstructing initial data using observers : error analysis of the semi-discrete and fully discrete approximations.}}
\makeindex
\begin{document}\maketitle

\begin{abstract}
A new iterative algorithm for solving initial data inverse problems from partial observations has been recently proposed in Ramdani, Tucsnak and Weiss \cite{RamTucWei10}. Based on the concept of observers  (also called Luenberger observers), this algorithm covers a large class of abstract evolution PDE's. In this paper, we are concerned with the convergence analysis of this algorithm. More precisely, we provide a complete numerical analysis for semi-discrete (in space) and fully discrete approximations derived using finite elements in space and finite differences in time. The analysis is carried out for abstract Schr\"odinger and wave conservative systems with bounded observation (locally distributed).
\keywords Control theory - Observers - Inverse problems - Finite element and finite difference discretization - Convergence analysis - Schr\"odinger equation - Wave equation
\subclass Primary : 35Q93 \\ Secondary : 35L05 - 35J10 - 65M22
\end{abstract}
%
\section{Introduction}
\setcounter{equation}{0}
\label{Sect_Intro}
%
The goal of this paper is to present a convergence analysis for the iterative algorithm recently proposed in Ramdani, Tucsnak and Weiss \cite{RamTucWei10} for solving initial state inverse problems from measurements over a time interval. This algorithm is based on the use back and forth in time of observers (sometimes called Luenberger observers or Kalman observers; see for instance Curtain and Zwart \cite{CurZwa95}). Let us emphasize that during the last decade, observers have been designed for linear and nonlinear infinite-dimensional systems in many works, among which we can mention for instance Auroux and Blum \cite{AurBlu08} in the context of data assimilation, Deguenon, Sallet and Xu \cite{DegSalXu06}, Guo and Guo \cite{GuoGuo09}, Guo and Shao \cite{GuoSha09} in the context of wave-type systems, Lasiecka and Triggiani \cite{LasTri00}, Smyshlyaev and Krstic \cite{SmyKrs05} for parabolic systems and Krstic, Magnis and Vazquez \cite{KrsMagVaz09} for the non linear viscous Burgers equation.

Let us first briefly describe the principle of the reconstruction method proposed in \cite{RamTucWei10} in the simplified context of skew-adjoint generators and bounded observation operator. We will always work under these assumptions throughout the paper. Given two Hilbert spaces $X$ and $Y$ (called \emph{state} and \emph{output} spaces respectively), let $A : \D \rightarrow X$ be skew-adjoint operator generating a $C_0$-group $\T$ of isometries on $X$ and let $C \in \mathcal{L}(X,Y)$ be a bounded observation operator. Consider the infinite dimensional linear system given by 
\begin{equation}\label{eq-sys-initial}
\left\lbrace \begin{array}{ll}
\dot{z}(t) = Az(t), &\quad \forall t \geqslant 0, \\
y(t) = Cz(t), &\quad \forall t \in [0,\tau].
\end{array} \right.
\end{equation}
where $z$ is the state and $y$ the output function (throughout the paper, the dot symbol is used to denote the time derivative).  Such systems are often used as models of vibrating systems (e.g., the wave equation, the beam equation,...), electromagnetic phenomena (Maxwell's equations) or in quantum mechanics (Schr\"odinger's equation).
\begin{center}
	\includegraphics[scale=0.5]{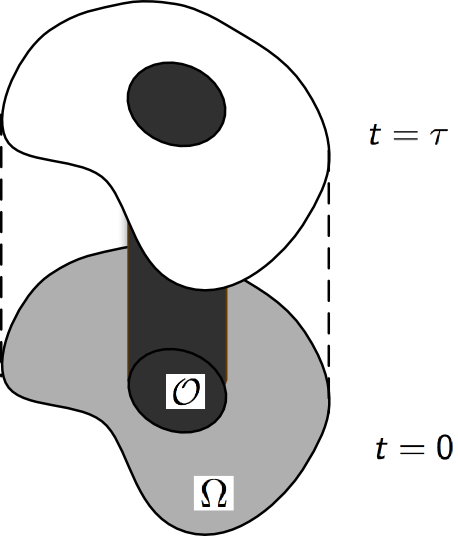}
	\captionof{figure}{An initial data inverse problem for evolution PDE's : How to reconstruct the initial state (light grey) for a PDE set on a domain $\Omega$ from partial observation on ${\cal O}\times [0,\tau]$ (dark grey)?}
	\label{Fig1}
\end{center}

The inverse problem considered here is to reconstruct the initial state $z_0 =z(0)$ of system \eqref{eq-sys-initial} knowing (\emph{the observation}) $y(t)$ on the time interval $[0, \tau]$ (see Fig. \ref{Fig1}). Such inverse problems arise in many applications, like thermoacoustic tomography Kuchment and Kunyansky \cite{KucKun08} or data assimilation Puel \cite{Pue09}. To solve this inverse problem, we assume here that it is well-posed, \emph{i.e.} that $(A,C)$ is exactly observable in time $\tau > 0$, i.e. that there exists $k_\tau>0$ such that
$$ 
\int_0^\tau \|y(t)\|^2 dt\geq k_\tau^2 \|z_0\|^2, \FORALL z_0\in{\cal D}(A).
$$
Following Liu \cite[Theorem 2.3.]{Liu97}, we know that $A^+ = A-C^*C$ (respectively $A^- = -A-C^*C$) generate an exponentially stable $C_0$-semigroup $\T^+$ (respectively $\T^-$) on $X$. Then, we introduce the following initial and final Cauchy problems, called respectively \emph{forward} and \emph{backward observers} of \eqref{eq-sys-initial}
\begin{equation}\label{eq-sys-obs-forward}
\left\lbrace \begin{array}{ll}
\dot{z}^+(t) = A^+z^+(t) + C^*y(t), \quad \forall t \in [0,\tau], \\
z^+(0) = 0,
\end{array} \right.
\end{equation}
\begin{equation}\label{eq-sys-obs-backward}
\left\lbrace \begin{array}{ll}
\dot{z}^-(t) = -A^-z^-(t) - C^*y(t), \quad \forall t \in [0,\tau], \\
z^-(\tau) = z^+(\tau).
\end{array} \right.
\end{equation}
Note that the states $z^+$ and $z^-$ of the forward and backward observers are completely determined by the knowledge of the output $y$. If we set $\Lb_\tau = \T_\tau^-\T_\tau^+$, then by \cite[Proposition 3.7]{RamTucWei10}, we have $\eta:=\|\Lb_\tau\|_{\mathcal{L}(X)} < 1$ and by \cite[Proposition 3.3]{RamTucWei10}, the following remarkable relation holds true
\begin{equation}\label{eq-z0-Ltau}
z_0 = (I-\Lb_\tau)^{-1}z^-(0).
\end{equation}
In particular, one can invert the operator $(I-\Lb_\tau)$ using a Neumann series and get the following expression for the initial state
\begin{equation}\label{eq-z0-neumann}
z_0 = \sum_{n=0}^\infty \Lb_\tau^{n}z^-(0).
\end{equation}
Thus, at least theoretically, the reconstruction of the initial state is given by the above formula. Note that  the computation of each term in the above sum requires to solve the two non-homogeneous systems \eqref{eq-sys-obs-forward} and \eqref{eq-sys-obs-backward}. In practice, the reconstruction procedure requires the discretization of these two systems and the truncation of the infinite sum in \eqref{eq-z0-neumann} to keep only a finite number of back and forth iterations. For instance, if we consider a space semi-discretization corresponding to a mesh size $h$ (typically a finite element approximation), one can only compute 
\begin{equation}\label{eq-z0h-neumann}
z_{0,h} = \sum_{n=0}^{N_h} \Lb_{h,\tau}^{n}z_h^-(0),
\end{equation}
where
\begin{itemize}
\item
$\Lb_{h,\tau}=\T_{h,\tau}^-\T_{h,\tau}^+$, where $\T_{h,\tau}^\pm\in {\cal L}(X)$ are suitable space discretizations of $\T_{\tau}^\pm$,
\item
$z_h^-(0)\in X_h$ is an approximation of $z^-(0)$ in a suitable finite dimensional subspace $X_h$ of $X$,
\item
$N_h$ is a suitable truncation parameter.
\end{itemize}
Similarly, if a full discretization described by a mesh size $h$ and a time step $\Delta t$ is considered, one can compute 
\begin{equation}\label{eq-z0hdelta-neumann}
z_{0,h,\Delta t} = \sum_{n=0}^{N_{h,\Delta t}} \Lb_{h,\Delta t,K}^{n} \left(z^-_{h}\right)^0.
\end{equation}
where
\begin{itemize}
\item
$ \Lb_{h,\Delta t,K}=\T_{h,\Delta t,K}^-\T_{h,\Delta t,K}^+$, where $\T_{h,\Delta t,K}^\pm$ are suitable space and time discretizations of $\T_{\tau}^\pm$,
\item
$\left(z^-_h\right)^0\in X_h$ is an approximation of $z^-(0)$,
\item
$N_{h,\Delta t}$ is a suitable truncation parameter.
\end{itemize}
For the sake of clarity, the precise definitions of the spaces and discretizations used will be given later in the paper.

Our objective in this work is to propose a convergence analysis of $z_{0,h}$ and $z_{0,h,\Delta t} $ towards $z_0$. A particular attention will be devoted to the optimal choice of the truncation parameters $N_{h}$ and $N_{h,\Delta t}$ for given discretization parameters (mesh size $h$ and time step $\Delta t$). Let us emphasize that our error estimates (see \eqref{eqz0z0h}, \eqref{eqz0z0hk}, \eqref{eq-w0w0h-w1w1h} and \eqref{eq-w0w0hk-w1w1hk}) provide in particular an upper bound for the maximum admissible noise under which convergence of the algorithm is guaranteed. As usually in approximation error theory of PDE's, some regularity assumptions are needed to obtain our error estimates. Namely, our result allows us to reconstruct only initial data contained in some subspace of $X$ (namely $\Ds$). Moreover, our analysis only holds for locally distributed observation (leading to bounded observation operators).

Throughout the paper, we denote by $M$ a constant independent of $\tau$, of the initial state $z_0$ and of the discretization parameters $h$ and $\Delta t$, but which may differ from line to line in the computations. 

The paper is organized as follows: in Section \ref{Sect_Schr} we provide a convergence analysis of the algorithm for an abstract Schr\"odinger type system, by considering successively the semi-discretization (Subsection \ref{Subsct_Schr1}) and the full discretization (Subsection \ref{Subsct_Schr2}). In Section \ref{Sect_Wave}, similar results are given for an abstract wave system. Once again, we tackle successively the semi-discretization (Subsection \ref{Subsct_Wave1}) and the full discretization (Subsection \ref{Subsct_Wave2}). Finally, the Appendix is devoted to the proof of two technical lemmas which are used several times troughout the paper.

\section{Schr\"odinger equation}
\setcounter{equation}{0}
\label{Sect_Schr}
Let $X$ be a Hilbert space endowed with the inner product $\left\langle\cdot,\cdot\right\rangle$. Let $A_0 : \Dz \rightarrow X$ be a strictly positive self-adjoint operator and $C \in \mathcal{L}(X,Y)$ a bounded observation operator, where $Y$ is an other Hilbert space. The norm in ${\cal D}(A_0^\alpha)$ will be denoted by $\|\cdot\|_\alpha$.
We assume that there exists some $\tau>0$ such that $(iA_0,C)$ is exactly observable in time $\tau$. Thus by Liu \cite[Theorem 2.3.]{Liu97}, $A^+=iA_0-C^*C$ (resp. $A^-=-iA_0-C^*C$) is the generator of an exponentially stable $C_0$-semigroup $\T^+$ (resp. $\T^-$). We want to reconstruct the initial value $z_0$ of the following system 
\begin{equation}\label{eq-sys-initial-schrodinger}
\left\lbrace \begin{array}{ll}
\dot{z}(t) = iA_0 z(t), \quad \forall t \geqslant 0, \\
y(t) = C z(t), \quad \forall t \in [0,\tau].
\end{array} \right.
\end{equation}
Throughout this section we always assume that $z_0 \in \Dzs$. Thus by applying Theorem 4.1.6 of Tucsnak and Weiss \cite{TucWei09}, we have $$z \in C\left([0,\tau],\Dzs\right) \cap C^1\left([0,\tau],\Dz\right).$$
The forward and backward observers \eqref{eq-sys-obs-forward} and \eqref{eq-sys-obs-backward} read then as follows 
\begin{equation}\label{eq-sys-obs-forward-schrodinger}
\left\lbrace \begin{array}{ll}
\dot{z}^+(t) = iA_0 z^+(t) - C^*C z^+(t) + C^*y(t), \quad \forall t \in [0,\tau], \\
z^+(0) = 0,
\end{array} \right.
\end{equation}
\begin{equation}\label{eq-sys-obs-backward-schrodinger}
\left\lbrace \begin{array}{ll}
\dot{z}^-(t) = iA_0 z^-(t) + C^*C z^-(t) - C^*y(t), \quad \forall t \in [0,\tau], \\
z^-(\tau) = z^+(\tau).
\end{array} \right.
\end{equation}
Clearly, the above systems can be rewritten in the general form of an initial value Cauchy problem (simply by using a time reversal for the second system)
\begin{equation}\label{eq-general-q-sch}
\left\{\begin{array}{ll}
\dot{q}(t) = \pm i A_0q(t) - C^*C q(t) + F(t), \qquad \forall t \in [0,\tau],\\
q(0) = q_0,
\end{array}\right.
\end{equation}
where we have set 
\begin{itemize}
\item for the forward observer \eqref{eq-sys-obs-forward-schrodinger} : $F(t) = C^*y (t)= C^*C z(t)$ and $q_0=0$,
\item for the backward observer \eqref{eq-sys-obs-backward-schrodinger} : $F(t) = C^*y(\tau-t) = C^*C z(\tau-t)$ and $q_0=z^+(\tau)\in \Dzs$.
\end{itemize}


\subsection{Space Semi-Discretization}
\label{Subsct_Schr1}

\subsubsection{Statement of the main result}
We use a Galerkin method to approximate system \eqref{eq-general-q-sch}. More precisely, consider a family $(X_h)_{h>0}$ of finite-dimensional subspaces of $\Dzdemi$ endowed with the norm in $X$. We denote $\pi_h$ the orthogonal projection from $\Dzdemi$ onto $X_h$. We assume that there exist $M>0$, $\theta>0$ and $h^*>0$ such that we have for all $h\in(0,h^*)$ 
\begin{equation}\label{eq-est-X}
\left\| \pi_h\varphi-\varphi \right\| \leq Mh^\theta \left\| \varphi \right\|_\frac{1}{2}, \qquad \forall \varphi \in \Dzdemi.
\end{equation}
Given $q_0 \in \Dzs$, the variational formulation of \eqref{eq-general-q-sch} reads for all $t\in[0,\tau]$ and all $\varphi \in \Dzdemi$ as follows
\begin{equation}\label{eq-general-q-FV-sch}
\left\{\begin{array}{ll}
\left\langle \dot{q}(t),\varphi \right\rangle = \pm i\left\langle q(t),\varphi \right\rangle_\frac{1}{2} - \left\langle C^*Cq(t),\varphi \right\rangle + \left\langle F(t),\varphi \right\rangle,\\
q(0)=q_0.
\end{array}\right.
\end{equation}
Suppose that $q_{0,h}\in X_h$ and $F_h$ are given approximations of $q_{0}$ and $F$ respectively in the spaces $X$ and $L^1\left([0,\tau],X\right)$. For all $t\in [0,\tau]$, we define $q_h(t)\in X_h$ as the unique solution of the variational problem 
\begin{equation}\label{eq-general-qh-FV-sch}
\left\{\begin{array}{ll}
\left\langle \dot{q}_h(t),\varphi_h\right\rangle = \pm i\left\langle q_h(t),\varphi_h \right\rangle_\frac{1}{2} - \left\langle C^*Cq_h(t),\varphi_h \right\rangle + \left\langle F_h(t),\varphi_h \right\rangle,\\
q_h(0)=q_{0,h}.
\end{array}\right.
\end{equation}
for all $\varphi_h \in X_h$.

The above approximation procedure leads in particular to the definition of the semi-discretized versions $\T^\pm_h$ of the semigroups $\T^\pm$ that we will use. Indeed, we simply set 
$$\T^+_t q_0\simeq \T^+_{h,t} q_0=q_h(t)\qquad \qquad 
\T^-_t q_0\simeq \T^-_{h,t} q_0=q_h(\tau-t)$$
where $q_h$ is the solution of equation \eqref{eq-general-qh-FV-sch} with the corresponding sign and for $F_h=0$ and $q_{0,h}=\pi_hq_0$.  The approximation of $\Lb_\tau = \T_\tau^-\T_\tau^+$ follows immediately by setting
$$
\Lb_{h,\tau} = \T^-_{h,\tau}\T^+_{h,\tau}.
$$ 

Assume that $y_h$ is an approximation of the output $y$ in $L^1([0,\tau],Y)$ and let ${z}_h^+$ and ${z}_h^-$ denote the Galerkin approximations of the solutions of systems \eqref{eq-sys-obs-forward-schrodinger} and \eqref{eq-sys-obs-backward-schrodinger}, satisfying for all $t \in [0,\tau]$ and all $\varphi_h \in X_h$
$$
\left\{\begin{array}{ll}
\left\langle \dot{z}_h^+(t),\varphi_h\right\rangle = i\left\langle {z}_h^+(t),\varphi_h \right\rangle_\frac{1}{2} - \left\langle C^*C{z}_h^+(t),\varphi_h \right\rangle + \left\langle C^*y_h(t),\varphi_h \right\rangle,\\
 {z}_h^+(0)=0.
\end{array}\right.
$$
$$
\left\{\begin{array}{ll}
\left\langle \dot{z}_h^-(t),\varphi_h\right\rangle = i\left\langle {z}_h^-(t),\varphi_h \right\rangle_\frac{1}{2} + \left\langle C^*C{z}_h^-(t),\varphi_h \right\rangle - \left\langle C^*y_h(t),\varphi_h \right\rangle,\\
 {z}_h^-(\tau)= {z}_h^+(\tau).
\end{array}\right.
$$

Thus, our main result in this subsection reads as follows.
\begin{theorem}\label{th-main-sch}
Let $A_0 : \Dz \rightarrow X$ be a strictly positive self-adjoint operator and $C \in \mathcal{L}(X,Y)$ such that $C^*C \in \mathcal{L}\left(\Dzs\right)\cap\mathcal{L}\left(\Dz\right)$. Assume that the pair $(iA_0,C)$ is exactly observable in time $\tau>0$ and set $\eta:=\|\Lb_\tau\|_{\mathcal{L}(X)} < 1$. Let $z_0\in \Dzs$ be the initial value of \eqref{eq-sys-initial-schrodinger} and $z_{0,h}$ be defined by \eqref{eq-z0h-neumann}. 

Then there exist $M > 0$ and $h^* > 0$ such that for all $h \in (0, h^*)$ 
$$
\| z_0 - z_{0,h} \| \leq M \left[ \left(\frac{\eta^{N_h+1}}{1-\eta} + h^\theta \tau N_h^2 \right)\| z_0 \|_2 + N_h \int_0^\tau \| C^*\left(y(s)-y_h(s)\right) \|ds \right].
$$
\end{theorem}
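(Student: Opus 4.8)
\emph{Proof sketch.} The plan is to start from the exact reconstruction formula \eqref{eq-z0-neumann}, $z_0=\sum_{n=0}^\infty\Lb_\tau^n z^-(0)$, and the definition \eqref{eq-z0h-neumann} of $z_{0,h}$. Subtracting and inserting the intermediate quantity $\Lb_{h,\tau}^n z^-(0)$, I would split the error as
\[
z_0-z_{0,h}=\underbrace{\sum_{n=N_h+1}^\infty\Lb_\tau^n z^-(0)}_{\text{(I)}}+\sum_{n=0}^{N_h}\Big[\underbrace{(\Lb_\tau^n-\Lb_{h,\tau}^n)z^-(0)}_{\text{(II)}}+\underbrace{\Lb_{h,\tau}^n\big(z^-(0)-z_h^-(0)\big)}_{\text{(III)}}\Big].
\]
Term (I) is immediate: since $\|\Lb_\tau\|_{\mathcal L(X)}=\eta<1$, a geometric bound gives $\|\text{(I)}\|\le\frac{\eta^{N_h+1}}{1-\eta}\|z^-(0)\|$, and by \eqref{eq-z0-Ltau} one has $\|z^-(0)\|=\|(I-\Lb_\tau)z_0\|\le(1+\eta)\|z_0\|\le M\|z_0\|_2$, which produces the first term of the claimed bound.

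The heart of the argument is term (II). First, a uniform (in $h$) stability fact: testing \eqref{eq-general-qh-FV-sch} with $F_h=0$ against $\varphi_h=q_h(t)$ and taking real parts kills the skew-symmetric part $\pm i\langle\cdot,\cdot\rangle_{1/2}$ and leaves $\frac{d}{dt}\|q_h\|^2=-2\|Cq_h\|^2\le0$, so each $\T_{h,\tau}^\pm$ is a contraction on $X$ and $\|\Lb_{h,\tau}^n\|_{\mathcal L(X)}\le1$ for all $n,h$. I would then telescope
\[
\Lb_\tau^n-\Lb_{h,\tau}^n=\sum_{k=0}^{n-1}\Lb_{h,\tau}^{\,k}\,(\Lb_\tau-\Lb_{h,\tau})\,\Lb_\tau^{\,n-1-k},
\]
so that the operator difference always acts on the \emph{continuous} iterates $\Lb_\tau^{\,n-1-k}z^-(0)\in\Dzs$ while the discrete contractions sit harmlessly on the left. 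The single-step estimate $\|(\Lb_\tau-\Lb_{h,\tau})v\|\le M h^\theta\tau\|v\|_2$ for $v\in\Dzs$ I would obtain from the decomposition $\Lb_\tau-\Lb_{h,\tau}=(\T_\tau^--\T_{h,\tau}^-)\T_\tau^++\T_{h,\tau}^-(\T_\tau^+-\T_{h,\tau}^+)$ — arranged precisely so that each semigroup difference hits either the smooth vector $\T_\tau^+v$ or $v$ itself, never a discrete vector — together with the contraction bound on $\T_{h,\tau}^-$, the boundedness of $\T_\tau^\pm$ on $\Dzs$, and a Galerkin semigroup estimate $\|(\T_\tau^\pm-\T_{h,\tau}^\pm)w\|\le Mh^\theta\tau\|w\|_2$ resting on \eqref{eq-est-X}. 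The crucial point is that on $\Dzs$ the iterates enjoy only \emph{uniform power-boundedness} $\|\Lb_\tau^{\,j}z^-(0)\|_2\le M\|z_0\|_2$ (no geometric decay, since dissipativity holds in $X$ but not in the graph norm of $A_0^2$); summing the telescope then gives $\|\text{(II)}_n\|\le n\,Mh^\theta\tau\|z_0\|_2$, and $\sum_{n=0}^{N_h}n\sim N_h^2/2$ yields exactly the quadratic factor $h^\theta\tau N_h^2\|z_0\|_2$.

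For term (III), the contraction bound gives $\|\text{(III)}\|\le(N_h+1)\|z^-(0)-z_h^-(0)\|$, so it remains to estimate the observer initialization error. Both observers are instances of the abstract Cauchy problem \eqref{eq-general-q-sch} and its Galerkin discretization \eqref{eq-general-qh-FV-sch}, so I would invoke the nonhomogeneous Galerkin error lemma of the Appendix, of the form $\|q(\tau)-q_h(\tau)\|\le M\big(\|q_0-q_{0,h}\|+h^\theta\tau\|q_0\|_2+\int_0^\tau\|F(s)-F_h(s)\|ds\big)$, applied first to the forward observer (with $q_0=q_{0,h}=0$, $F=C^*y$, $F_h=C^*y_h$) to bound $\|z^+(\tau)-z_h^+(\tau)\|$, and then to the backward observer initialized at $z^+(\tau)$ resp.\ $z_h^+(\tau)$. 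This gives $\|z^-(0)-z_h^-(0)\|\le M\big(h^\theta\tau\|z_0\|_2+\int_0^\tau\|C^*(y-y_h)\|ds\big)$; multiplying by $(N_h+1)$ produces the last term of the theorem, the spurious $N_h h^\theta\tau\|z_0\|_2$ contribution being absorbed into the $N_h^2$ term.

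\emph{Main obstacle.} The delicate part is entirely term (II): one must arrange every operator difference so that it acts on a smooth continuous vector rather than a discrete one (whose $\Dzs$-norm is not uniformly controlled), and one must establish the two supporting facts — the $h$-uniform contractivity $\|\Lb_{h,\tau}\|_{\mathcal L(X)}\le1$ and, above all, the uniform power-boundedness of $\Lb_\tau$ on $\Dzs$ — with constants independent of $\tau$, $z_0$, $h$ and the iteration index. It is precisely the loss of geometric decay in the stronger norm that forces the $N_h^2$ growth and makes the optimal balancing of $N_h$ against $h$ nontrivial.
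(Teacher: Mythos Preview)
Your proof is correct and follows the same three-term decomposition as the paper's own argument (called $S_1,S_2,S_3$ there). The differences are cosmetic: for term (II) you telescope $\Lb_\tau^n-\Lb_{h,\tau}^n$ whereas the paper proceeds by induction on $n$ via an intermediate $\pi_h\Lb_\tau^n$ term (Proposition~\ref{prop-L-tau-h-sch}), and for the discrete contraction $\|\Lb_{h,\tau}\|_{\mathcal L(X)}\le1$ you give a direct energy argument while the paper deduces it from the convergence estimate itself for $h$ small enough --- your route is in fact cleaner. One minor correction: your parenthetical remark that ``dissipativity holds in $X$ but not in the graph norm of $A_0^2$'' is wrong --- Lemma~\ref{lem-contraction} in the Appendix shows that $A^\pm$ generate contraction semigroups on $\Dzs$ as well, so $\|\Lb_\tau\|_{\mathcal L(\Dzs)}\le1$; but since you only use uniform power-boundedness, this misstatement is harmless. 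Also, the ``nonhomogeneous Galerkin error lemma'' you invoke is Proposition~\ref{prop-pihq-qh-sch} in the main text (not the Appendix), and its full statement carries additional source terms $\|F\|_{1,\infty}$, $\|F\|_{2,\infty}$ that you suppressed; in the application these become $\|C^*y\|_{\alpha,\infty}\le M\|z_0\|_2$ via the hypothesis $C^*C\in\mathcal L(\Dzs)\cap\mathcal L(\Dz)$ and contribute a harmless $h^\theta\tau^2\|z_0\|_2$ that the paper absorbs by shrinking $h^*$.
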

A particular choice of $ N_h $ leads to an explicit error estimate (with respect to $h$) as shown in the next Corollary (the proof is left to the reader because of its simplicity) 
\begin{corollary}
Under the assumptions of Theorem \ref{th-main-sch}, we set 
$$
N_h = \theta \frac{\ln h}{\ln \eta}.
$$
Then, there exist $ M_\tau > 0 $ and $ h^* > 0 $ such that for all $h \in (0, h^*)$
\begin{equation}\label{eqz0z0h}
\| z_0 - z_{0,h} \| \leq M_\tau \left(h^\theta \ln^2 h\, \| z_0 \|_2 + |\ln h| \int_0^\tau \| C^*\left(y(s)-y_h(s)\right) \|ds \right).
\end{equation}
\end{corollary}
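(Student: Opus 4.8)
The plan is to compare the exact Neumann series \eqref{eq-z0-neumann} with its truncated, discretized counterpart \eqref{eq-z0h-neumann}. Writing $w:=z^-(0)$ and $w_h:=z_h^-(0)$, and recalling $\|\Lb_\tau\|_{\mathcal L(X)}=\eta<1$, I would first split the error into a tail and a discretization part,
$$z_0-z_{0,h}=\sum_{n=N_h+1}^{\infty}\Lb_\tau^n w\;+\;\sum_{n=0}^{N_h}\bigl(\Lb_\tau^n-\Lb_{h,\tau}^n\bigr)w\;+\;\sum_{n=0}^{N_h}\Lb_{h,\tau}^n\,(w-w_h).$$
The first sum is controlled at once: by \eqref{eq-z0-Ltau} one has $w=(I-\Lb_\tau)z_0$, so $\|w\|\le 2\|z_0\|\le M\|z_0\|_2$, and the geometric series yields the term $\tfrac{\eta^{N_h+1}}{1-\eta}\|z_0\|_2$. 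The remaining two sums carry the discretization error, and the whole point is to show that they grow only polynomially in $N_h$.

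Both of these sums rest on the \emph{dissipativity} of the Galerkin scheme. Testing \eqref{eq-general-qh-FV-sch} (with $F_h=0$) against $\varphi_h=q_h(t)$ and taking real parts, the skew term $\pm i\langle q_h,q_h\rangle_{\frac12}$ drops out and one gets $\tfrac12\tfrac{d}{dt}\|q_h(t)\|^2=-\|Cq_h(t)\|^2\le 0$; hence each discrete propagator $\T_{h,\tau}^\pm$ is a contraction on $X$ and $\|\Lb_{h,\tau}\|_{\mathcal L(X)}\le 1$. This disposes of the third sum, $\bigl\|\sum_{n=0}^{N_h}\Lb_{h,\tau}^n(w-w_h)\bigr\|\le(N_h+1)\|w-w_h\|$, and it remains to bound the observer error $\|z^-(0)-z_h^-(0)\|$. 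Propagating the two inhomogeneous observer errors by Duhamel's formula and using the projection estimate \eqref{eq-est-X} (the group governing \eqref{eq-sys-initial-schrodinger} is isometric and commutes with $A_0$, so the source $F=C^*Cz$ satisfies $\|F(t)\|_2\le M\|z_0\|_2$), I expect $\|w-w_h\|\le M\bigl(h^\theta\tau\|z_0\|_2+\int_0^\tau\|C^*(y-y_h)(s)\|\,ds\bigr)$. Multiplying by $N_h+1$ produces exactly the data term $N_h\int_0^\tau\|C^*(y-y_h)\|\,ds$ and a contribution absorbed into the $h^\theta$ term.

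The heart of the matter is the second sum. I would use the telescoping identity
$$\Lb_\tau^n-\Lb_{h,\tau}^n=\sum_{k=0}^{n-1}\Lb_\tau^{\,n-1-k}\bigl(\Lb_\tau-\Lb_{h,\tau}\bigr)\Lb_{h,\tau}^k,$$
bound $\|\Lb_\tau\|_{\mathcal L(X)}\le 1$, split $\Lb_\tau-\Lb_{h,\tau}=\T_\tau^-(\T_\tau^+-\T_{h,\tau}^+)+(\T_\tau^--\T_{h,\tau}^-)\T_{h,\tau}^+$, and invoke the single-step semigroup error estimate $\|(\T_\tau^\pm-\T_{h,\tau}^\pm)v\|\le M h^\theta\tau\|v\|_2$ (one of the Appendix lemmas, proved via Duhamel and \eqref{eq-est-X}). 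Each of the $n$ telescoping terms then costs $Mh^\theta\tau\|\Lb_{h,\tau}^k w\|_2$, so $\|(\Lb_\tau^n-\Lb_{h,\tau}^n)w\|\le M n\,h^\theta\tau\|z_0\|_2$, and summing the arithmetic series $\sum_{n\le N_h}n\sim N_h^2$ gives precisely the $h^\theta\tau N_h^2\|z_0\|_2$ term.

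The step I expect to be the genuine obstacle is the uniform bound $\|\Lb_{h,\tau}^k w\|_2\le M\|z_0\|_2$, valid for all $k\le N_h$ with a constant independent of both $h$ and the number of factors, used silently above (and likewise for the compositions $\T_{h,\tau}^+\Lb_{h,\tau}^k$). Since $X_h$ is finite-dimensional, the norms $\|\cdot\|$ and $\|\cdot\|_2$ are equivalent on it but \emph{not uniformly in $h$}, so this cannot be obtained by soft arguments; one must show that the discrete observer dynamics are stable in the graph norm of $A_0^2$, uniformly in $h$ and in the iteration count. This is exactly where the structural hypothesis $C^*C\in\mathcal L(\Dzs)\cap\mathcal L(\Dz)$ is indispensable: it keeps the commutators $[A_0^2,C^*C]$ and $[A_0,C^*C]$ of lower order, so that the higher-norm energy identity loses only lower-order terms, which can be reabsorbed using the $X$-contractivity $\eta<1$ to prevent accumulation over iterations. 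Establishing this uniform higher-norm stability (together with the single-step estimate) is the technical crux; once it is granted, everything else is bookkeeping with the triangle inequality. Finally, the displayed Corollary is immediate: substituting $N_h=\theta\ln h/\ln\eta$ gives $\eta^{N_h}=h^\theta$ and $N_h\sim|\ln h|$, turning the three terms into $h^\theta\ln^2 h\,\|z_0\|_2$ and $|\ln h|\int_0^\tau\|C^*(y-y_h)\|\,ds$.
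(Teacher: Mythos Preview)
Your overall architecture---the three-way split into tail, operator error, and first-iterate error, followed by the substitution $N_h=\theta\ln h/\ln\eta$---matches the paper exactly (and the paper in fact declares the Corollary itself trivial once Theorem~\ref{th-main-sch} is in hand). The discrepancy is in how you handle the middle sum $\sum_{n\le N_h}(\Lb_\tau^n-\Lb_{h,\tau}^n)w$.

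You telescope with the \emph{discrete} operator on the right,
\[
\Lb_\tau^n-\Lb_{h,\tau}^n=\sum_{k=0}^{n-1}\Lb_\tau^{\,n-1-k}\bigl(\Lb_\tau-\Lb_{h,\tau}\bigr)\Lb_{h,\tau}^k,
\]
which forces you to control $\|\Lb_{h,\tau}^k w\|_2$ uniformly in $h$ and $k$. You are right that this is a genuine obstacle: $X_h\subset\Dzdemi$, not $\Dzs$, so the $\|\cdot\|_2$-norm of a Galerkin iterate is in general $h$-dependent, and your proposed commutator argument does not obviously survive projection onto $X_h$. The paper never confronts this issue because it telescopes the \emph{other way} (via the induction in Proposition~\ref{prop-L-tau-h-sch}): it writes
\[
\pi_h\Lb_\tau^n q_0-\Lb_{h,\tau}^n q_0
=(\pi_h\Lb_\tau-\Lb_{h,\tau})\,\Lb_\tau^{\,n-1}q_0
+\Lb_{h,\tau}\bigl(\Lb_\tau^{\,n-1}q_0-\Lb_{h,\tau}^{\,n-1}q_0\bigr),
\]
so the single-step error always hits a \emph{continuous} iterate $\Lb_\tau^{\,n-1}q_0$, whose $\Dzs$-norm is controlled by the contraction property of $\T^\pm$ on $\Ds$ (Lemma~\ref{lem-contraction}). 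The outer factor $\Lb_{h,\tau}$ then only needs to be bounded in $\mathcal L(X)$, which is immediate (your dissipativity computation gives this cleanly). Reversing your telescoping eliminates the obstacle entirely and makes the hypothesis $C^*C\in\mathcal L(\Dzs)\cap\mathcal L(\Dz)$ enter only through Lemma~\ref{lem-contraction}, not through any discrete higher-norm stability.
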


\begin{remark}
In fact, Theorem \ref{th-main-sch} still holds true for $z_0\in\Dztdemi$ (with the same proofs and slightly adapting the spaces). Nevertheless, we have not been able to carry out this analysis for the fully discrete approximation in this case. This is why we restricted our analysis to the case of an initial data $z_0\in\Dzs$.
\end{remark}

\subsubsection{Proof of Theorem \ref{th-main-sch}}
Before proving Theorem \ref{th-main-sch}, we first need to prove some auxiliary results. The next Proposition, which constitutes one of the main ingredients of the proof, provides the error estimate for the approximation in space of the initial value problem \eqref{eq-general-q-FV-sch} by using the Galerkin scheme \eqref{eq-general-qh-FV-sch}.
\begin{proposition}\label{prop-pihq-qh-sch}
Given $q_0 \in \Dzs$ and $q_{0,h}\in X_h$, let $q$ and $q_h$ be the solutions of \eqref{eq-general-q-FV-sch} and \eqref{eq-general-qh-FV-sch} respectively. Assume that $C^*C \in \mathcal{L}\left(\Dz\right)$. Then, there exist $M>0$ and $h^*>0$ such that for all $t\in[0,\tau]$ and all $h\in(0,h^*)$ 
\begin{multline*}
\| \pi_hq(t)-q_h(t) \| \leq \| \pi_hq_0-q_{0,h} \| + Mh^\theta \Big[ t\left( \| q_0 \|_2 + \| F \|_{1,\infty} \right) + t^2 \| F \|_{2,\infty} \Big]\\
+ \int_0^t \| F(s)-F_h(s) \| ds.
\end{multline*}
\end{proposition}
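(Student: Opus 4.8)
The plan is to run the standard Galerkin energy argument, splitting the error through the projection $\pi_h$. Write $q(t)-q_h(t)=\rho(t)+\xi(t)$, where $\rho(t)=q(t)-\pi_hq(t)$ is the projection (consistency) error and $\xi(t)=\pi_hq(t)-q_h(t)\in X_h$ is the purely discrete error that we actually want to bound, since $\|\xi(t)\|=\|\pi_hq(t)-q_h(t)\|$ is the quantity in the statement. Reading $\pi_h$ as the orthogonal projection onto $X_h$ for the inner product $\langle\cdot,\cdot\rangle_\frac12$ (this is what makes the argument close), the assumption \eqref{eq-est-X} immediately gives $\|\rho(t)\|\leq Mh^\theta\|q(t)\|_\frac12$, and because $\pi_h$ commutes with $\partial_t$ the same bound holds for $\dot\rho(t)=(I-\pi_h)\dot q(t)$, i.e. $\|\dot\rho(t)\|\leq Mh^\theta\|\dot q(t)\|_\frac12$.

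First I would derive the equation for $\xi$. Testing \eqref{eq-general-q-FV-sch} against $\varphi_h\in X_h$, subtracting \eqref{eq-general-qh-FV-sch}, inserting $q-q_h=\rho+\xi$, and using that the $\frac12$-orthogonality of $\pi_h$ kills $\langle\rho,\varphi_h\rangle_\frac12$ while $\langle\pi_h\dot q,\varphi_h\rangle=\langle\dot q,\varphi_h\rangle-\langle\dot\rho,\varphi_h\rangle$, one obtains for all $\varphi_h\in X_h$
\[
\langle\dot\xi,\varphi_h\rangle = \pm i\langle\xi,\varphi_h\rangle_\frac12 - \langle C^*C(\rho+\xi),\varphi_h\rangle + \langle F-F_h,\varphi_h\rangle - \langle\dot\rho,\varphi_h\rangle.
\]
Taking $\varphi_h=\xi$ and real parts, the skew term $\pm i\|\xi\|_\frac12^2$ vanishes and the dissipative contribution $-\langle C^*C\xi,\xi\rangle=-\|C\xi\|^2\leq 0$ can be discarded (so no Gronwall factor is incurred at the $X$ level, which is what keeps $M$ independent of $\tau$). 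With $\tfrac12\frac{d}{dt}\|\xi\|^2=\|\xi\|\frac{d}{dt}\|\xi\|$ and Cauchy–Schwarz this yields $\frac{d}{dt}\|\xi\|\leq \|C^*C\|_{\mathcal{L}(X)}\|\rho\|+\|\dot\rho\|+\|F-F_h\|$. Integrating on $[0,t]$ and recognising $\|\xi(0)\|=\|\pi_hq_0-q_{0,h}\|$ reproduces the terms $\|\pi_hq_0-q_{0,h}\|$ and $\int_0^t\|F-F_h\|\,ds$ of the claim exactly, leaving $\int_0^t(\|C^*C\|\,\|\rho\|+\|\dot\rho\|)\,ds$ to be estimated.

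It then remains to convert $\int_0^t\|\rho\|\,ds$ and $\int_0^t\|\dot\rho\|\,ds$, i.e. $h^\theta\int_0^t\|q\|_\frac12\,ds$ and $h^\theta\int_0^t\|\dot q\|_\frac12\,ds$, into the claimed $h^\theta[\,t(\|q_0\|_2+\|F\|_{1,\infty})+t^2\|F\|_{2,\infty}\,]$. For this I would use the Duhamel representation $q(t)=\T^\pm_tq_0+\int_0^t\T^\pm_{t-s}F(s)\,ds$ together with uniform-in-time boundedness of $\T^\pm$ on $\Dzdemi$, which gives $\|q(s)\|_\frac12\lesssim\|q_0\|_\frac12+\int_0^s\|F\|_\frac12$ and hence a $t$-term plus a $t^2$-term after integrating in $s$; the quadratic power comes precisely from this double time integral. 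For $\dot q$ I would apply the same reasoning to $w=\dot q$, which solves \eqref{eq-general-q-sch} with source $\dot F$ and datum $w(0)=\pm iA_0q_0-C^*Cq_0+F(0)$, bounding $\|w(0)\|_\frac12\lesssim\|q_0\|_2+\|F(0)\|_\frac12$ (using $C^*C\in\mathcal{L}(\Dz)$ and interpolation to get $C^*C\in\mathcal{L}(\Dzdemi)$) for the $t$-term and the Duhamel integral of $\dot F$ for the $t^2$-term.

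The main obstacle is exactly this regularity input: I must control the solution in the $\Dzdemi$ norm \emph{uniformly in $t$}, even though the damping $C^*C$ does not commute with $A_0$, so $\T^\pm$ is not manifestly bounded on $\Dzdemi$ — a naive energy estimate at the $\frac12$ level produces an unwanted exponential factor that would destroy the clean polynomial-in-$t$ dependence and the $\tau$-independence of $M$. Establishing the uniform bounds $\|\T^\pm_t\|_{\mathcal{L}(\mathcal{D}(A_0^\alpha))}\leq M$ for $\alpha=\tfrac12,1$ (presumably the technical lemmas proved in the Appendix, exploiting $C^*C\in\mathcal{L}(\Ds)\cap\mathcal{L}(\Dz)$) is the crux; once they are available, matching the result to the precise definitions of $\|F\|_{1,\infty}$ and $\|F\|_{2,\infty}$ is only bookkeeping.
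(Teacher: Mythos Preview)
Your energy argument for $\xi=\pi_hq-q_h$ is exactly the paper's: subtract the two weak formulations, use the $\frac12$-orthogonality $\langle\rho,\varphi_h\rangle_{1/2}=0$, test with $\varphi_h=\xi$, discard the dissipative term $-\|C\xi\|_Y^2$, and integrate. This correctly produces $\|\pi_hq_0-q_{0,h}\|+\int_0^t\|F-F_h\|\,ds$ together with the residual $\int_0^t\big(\|C^*C\|\,\|\rho\|+\|\dot\rho\|\big)\,ds$.

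The gap is in how you convert $\int_0^t\|\dot\rho\|\,ds$ into the stated norms. The paper does \emph{not} stay at the $\frac12$-level, nor does it differentiate the equation. It immediately embeds $\|\dot q\|_{1/2}\le M\|\dot q\|_1$ and $\|q\|_{1/2}\le M\|q\|_2$ (harmless, since the target bound is already phrased in terms of $\|q_0\|_2$, $\|F\|_{1,\infty}$, $\|F\|_{2,\infty}$), and then bounds $\|\dot q(t)\|_1$ \emph{pointwise from the equation}: $\dot q=\pm iA_0q-C^*Cq+F$ gives $\|\dot q\|_1\le M\|q\|_2+\|F\|_1$, while $\|q(t)\|_2\le\|q_0\|_2+t\|F\|_{2,\infty}$ by Duhamel and contraction of $\T^\pm$ on $\Dzs$ (Lemmas~\ref{lem-contraction} and~\ref{lem-annexe2}). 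Integrating in $t$ then yields precisely $t(\|q_0\|_2+\|F\|_{1,\infty})+t^2\|F\|_{2,\infty}$.

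Your route, running Duhamel on $w=\dot q$ with source $\dot F$, leaves you with a $t^2\|\dot F\|_{1/2,\infty}$ term, and $\|\dot F\|_{1/2,\infty}$ is \emph{not} dominated by $\|F\|_{2,\infty}$ or $\|F\|_{1,\infty}$: spatial regularity of $F$ says nothing about its time derivative. So the inequality \emph{as stated} does not follow; this is a genuine norm mismatch, not bookkeeping. As a side benefit, the obstacle you single out --- uniform boundedness of $\T^\pm$ on $\Dzdemi$ --- evaporates once you embed to integer levels first: Lemma~\ref{lem-contraction} gives contraction on $\Dz$ and $\Dzs$ directly, and no half-integer semigroup bound is ever invoked.
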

\begin{proof}
First, we substract \eqref{eq-general-qh-FV-sch} from \eqref{eq-general-q-FV-sch} and obtain (we omit the time dependence for the sake of clarity) for all $\varphi_h\in X_h$ 
$$
\left\langle \dot{q}-\dot{q}_h,\varphi_h \right\rangle = \pm i\left\langle q-q_h,\varphi_h \right\rangle_\frac{1}{2} - \left\langle C^*C(q-q_h),\varphi_h \right\rangle + \left\langle F-F_h,\varphi_h \right\rangle.
$$
Noting that $\left\langle \pi_hq-q,\varphi_h \right\rangle_\frac{1}{2} = 0$ for all $\varphi_h\in X_h$ and that $\pi_h\dot{q}$ makes sense by the regularity of $q$ (see \eqref{eq-reg-q}), we obtain from the above equality that for all $\varphi_h\in X_h$ 
\begin{multline}\label{eq-tech-pihq-qh-sch}
\left\langle \pi_h\dot{q}-\dot{q}_h,\varphi_h \right\rangle = \left\langle \pi_h\dot{q}-\dot{q},\varphi_h \right\rangle \pm i\left\langle \pi_hq-q_h,\varphi_h \right\rangle_\frac{1}{2}\\
-\left\langle C^*C\left(q-q_h\right),\varphi_h \right\rangle + \left\langle F-F_h,\varphi_h \right\rangle.
\end{multline}
On the other hand, setting 
$$
\mathcal{E}_h = \frac{1}{2} \|\pi_hq-q_h\|^2,
$$
we have 
$$
\dot{\mathcal{E}}_h = \mbox{Re} \, \left\langle \pi_h\dot{q}-\dot{q}_h,\pi_hq-q_h\right\rangle.
$$
Applying \eqref{eq-tech-pihq-qh-sch} with $\varphi_h=\pi_hq-q_h$ and substituting the result in the above relation, we obtain by using Cauchy-Schwarz inequality and the boundedness of $C$ that there exists $M>0$ such that 
$$
\dot{\mathcal{E}}_h \leq \left( \|\pi_h\dot{q}-\dot{q}\| + M \|\pi_hq-q\| + \|F-F_h\| \right) \underbrace{\|\pi_hq-q_h\|}_{=\sqrt{2\mathcal{E}_h}}.
$$
Since $ \dfrac{\dot{\mathcal{E}}_h}{\sqrt{2\mathcal{E}_h}}=\dfrac{d}{dt}\sqrt{2\mathcal{E}_h}$, the integration of the above inequality from $0$ to $t$ yields 
\begin{multline}\label{eq-intermediate-sch}
\|\pi_hq(t)-q_h(t)\| \leq \|\pi_hq_0-q_{0,h}\| + \int_0^t \left(\|\pi_h\dot{q}(s)-\dot{q}(s)\| + M \|\pi_hq(s)-q(s)\|\right)ds \\
+\int_0^t\|F(s)-F_h(s)\|ds.
\end{multline}
Thus, it remains to bound $\|\pi_h\dot{q}(t)-\dot{q}(t)\|$ and $\|\pi_hq(t)-q(t)\|$ for all $t\in [0,\tau]$. Using \eqref{eq-est-X} and the classical continuous embedding from ${\cal D}(A^\alpha)$ to ${\cal D}(A^\beta)$ for $\alpha>\beta$, we get that 
$$
\left\{
\begin{array}{l}
\|\pi_h\dot{q}(t)-\dot{q}(t)\| \leq M h^\theta \|\dot{q}(t)\|_\frac{1}{2} \leq M h^\theta \|\dot{q}(t)\|_1,\\
\|\pi_hq(t)-q(t)\| \leq M h^\theta \|q(t)\|_\frac{1}{2} \leq M h^\theta \|q(t)\|_2,
\end{array}
\right.
\qquad \forall t \in [0,\tau], \; h \in (0,h^*).
$$
Using relations \eqref{eq-q-norm-1-2} and \eqref{eq-dotq-norm-1} proved in Lemma \ref{lem-annexe2} of the Appendix, we get for all $t \in [0,\tau]$ and all $h \in (0,h^*)$ 
$$
\|\pi_h\dot{q}(t)-\dot{q}(t)\| + \|\pi_hq(t)-q(t)\| \leq M h^\theta \left( \|q_0\|_2 + t \|F\|_{2,\infty} + \|F\|_{1,\infty} \right).
$$
Substituting the above inequality in \eqref{eq-intermediate-sch}, we get the result.
\end{proof}
Using the last result, we derive an error approximation for the semigroups $\T^\pm$ and for the operator $\Lb_t=\T_t^-\T_t^+$.
\begin{proposition}\label{prop-L-tau-h-sch}
Under the assumptions of Proposition \ref{prop-pihq-qh-sch}, the following assertions hold true
\begin{enumerate}
\item There exist $M>0$ and $h^*>0$ such that for all $t\in(0,\tau)$ and all $h\in(0,h^*)$ 
\begin{equation}\label{eq-cor-pihq-qh-forward-sch}
\left\|\pi_h\T^+_tq_0 - \T^+_{h,t} q_0\right\| \leq M t h^\theta \|q_0\|_2.
\end{equation}
\begin{equation}\label{eq-cor-pihq-qh-backward-sch}
\left\|\pi_h\T^-_tq_0 - \T^-_{h,t} q_0\right\| \leq M (\tau-t) h^\theta \|q_0\|_2.
\end{equation}

\item There exist $M>0$ and $h^*>0$ such that for all $n\in\mathbb{N}$, all $t\in[0,\tau]$ and all $h\in(0,h^*)$, we have 
\begin{equation}\label{eq-cor-pihq-qh-sch2}
\| \Lb_t^n q_0 - \Lb_{h,t}^n q_0 \| \leq M(1 + n \tau ) h^\theta \| q_0 \|_2.
\end{equation}
\end{enumerate}
\end{proposition}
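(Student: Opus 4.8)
The first assertion is a direct reading of Proposition~\ref{prop-pihq-qh-sch} and the second follows from it by telescoping. By definition $\T^+_{h,t}q_0=q_h(t)$ and $\T^+_tq_0=q(t)$, where $q$ and $q_h$ solve \eqref{eq-general-q-FV-sch} and \eqref{eq-general-qh-FV-sch} with the $+$ sign, $F=F_h=0$ and $q_{0,h}=\pi_hq_0$. Inserting these choices into Proposition~\ref{prop-pihq-qh-sch} kills every term on the right-hand side except $Mth^\theta\|q_0\|_2$, since $\|\pi_hq_0-q_{0,h}\|=0$ and $F\equiv0$; this is precisely \eqref{eq-cor-pihq-qh-forward-sch}. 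For the backward semigroup, $\T^-_{h,t}q_0=q_h(\tau-t)$ and $\T^-_tq_0=q(\tau-t)$ with the $-$ sign, so the same computation read at time $\tau-t$ gives \eqref{eq-cor-pihq-qh-backward-sch}. The only point to check is that Proposition~\ref{prop-pihq-qh-sch} is insensitive to the sign, which is clear from its proof: the term $\pm i\langle\pi_hq-q_h,\pi_hq-q_h\rangle_{\frac12}$ is purely imaginary and disappears when one takes the real part in the energy identity.

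\emph{One-step estimate.} The engine of the second assertion is the bound $\|\pi_h\Lb_tq_0-\Lb_{h,t}q_0\|\leq M\tau h^\theta\|q_0\|_2$. To obtain it I would split, using $\Lb_t=\T^-_t\T^+_t$ and $\Lb_{h,t}=\T^-_{h,t}\T^+_{h,t}$,
\[
\pi_h\Lb_tq_0-\Lb_{h,t}q_0=\left(\pi_h\T^-_t-\T^-_{h,t}\right)\T^+_tq_0+\T^-_{h,t}\left(\T^+_tq_0-\T^+_{h,t}q_0\right).
\]
The first summand is estimated by the (already proved) backward bound \eqref{eq-cor-pihq-qh-backward-sch} applied to $w=\T^+_tq_0\in\Dzs$, giving $M(\tau-t)h^\theta\|\T^+_tq_0\|_2$; here I use the single-step regularity $\|\T^+_tq_0\|_2\leq M\|q_0\|_2$ on $[0,\tau]$, which is the $F=0$ case of Lemma~\ref{lem-annexe2}. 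For the second summand, $\T^-_{h,t}$ starts by projecting, so $\|\T^-_{h,t}v\|\leq\|\pi_hv\|$; applied to $v=\T^+_tq_0-\T^+_{h,t}q_0$ and using $\pi_h\T^+_{h,t}q_0=\T^+_{h,t}q_0$, this gives $\|\pi_h\T^+_tq_0-\T^+_{h,t}q_0\|\leq Mth^\theta\|q_0\|_2$ by the forward bound \eqref{eq-cor-pihq-qh-forward-sch}. Adding the two contributions produces the factor $(\tau-t)+t=\tau$, which is why the time $t$ drops out.

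\emph{Telescoping.} Keeping the projection inside and using $\Lb_{h,t}\pi_h=\Lb_{h,t}$ (the discrete flow begins with a projection), I would write
\[
\pi_h\Lb_t^n-\Lb_{h,t}^n=\sum_{k=1}^{n}\Lb_{h,t}^{\,n-k}\left(\pi_h\Lb_t-\Lb_{h,t}\pi_h\right)\Lb_t^{\,k-1}.
\]
Each inner factor evaluated at $q_0$ equals $\pi_h\Lb_tw-\Lb_{h,t}w$ with $w=\Lb_t^{k-1}q_0$, hence is bounded by $M\tau h^\theta\|\Lb_t^{k-1}q_0\|_2$ through the one-step estimate, and it lies in $X_h$. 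Since the discrete semigroups are contractions on $X_h$ (the discrete energy identity gives $\frac{d}{dt}\|q_h\|^2=-2\|Cq_h\|^2\leq0$, so $\|\Lb_{h,t}^{\,n-k}\|_{\mathcal L(X_h)}\leq1$), the outer power is harmless, and summation yields $\|\pi_h\Lb_t^nq_0-\Lb_{h,t}^nq_0\|\leq M\tau h^\theta\sum_{k=0}^{n-1}\|\Lb_t^kq_0\|_2$. Passing from $\pi_h\Lb_t^nq_0$ to $\Lb_t^nq_0$ costs one projection error $\|\Lb_t^nq_0-\pi_h\Lb_t^nq_0\|\leq Mh^\theta\|\Lb_t^nq_0\|_{\frac12}\leq Mh^\theta\|\Lb_t^nq_0\|_2$ by \eqref{eq-est-X}, which accounts for the additive constant $1$ in $(1+n\tau)$.

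\emph{Main difficulty.} Everything now rests on the uniform-in-$n$ regularity bound $\|\Lb_t^nq_0\|_2\leq M\|q_0\|_2$, and this is the step I expect to be hard. A direct graph-norm energy estimate only yields $\|\Lb_t^nq_0\|_2\leq M^n\|q_0\|_2$, since the commutator $[A_0^2,C^*C]$ destroys the dissipativity that makes $\Lb_t$ an $X$-contraction; such exponential growth in $n$ would be incompatible with the linear factor $(1+n\tau)$ and with the $N_h^2$ term of Theorem~\ref{th-main-sch}. To get a constant independent of $n$ I would instead exploit the self-adjoint structure $\Lb_\tau=\left(\T^+_\tau\right)^*\T^+_\tau$ with $0\leq\Lb_\tau\leq\eta I<I$, together with the hypothesis $C^*C\in\mathcal L\left(\Dzs\right)$, to establish power-boundedness (ideally contractivity) of $\Lb_\tau$ on $\Dzs$ — the natural route being to rerun the stabilization argument behind $\eta<1$ in the graph norm so that $\|\Lb_\tau\|_{\mathcal L\left(\Dzs\right)}\leq1$. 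This is presumably the content of one of the technical lemmas of the Appendix, and it is the real source of the clean linear dependence on $n$.
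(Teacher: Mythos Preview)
Your proof is correct and follows the paper's argument closely: Part~1 is identical, and for Part~2 the paper also splits off the projection error $\|\Lb_t^n q_0-\pi_h\Lb_t^n q_0\|$ (giving the additive $1$) and then bounds $\|\pi_h\Lb_t^n q_0-\Lb_{h,t}^n q_0\|$ by $Mn\tau h^\theta\|q_0\|_2$ via induction rather than your telescoping sum, which is equivalent. You correctly isolate the only nontrivial input --- power-boundedness of $\Lb_t$ on $\Dzs$ --- and this is precisely Lemma~\ref{lem-contraction} of the Appendix; its proof, however, is simpler than your sketch: the assumption $C^*C\in\mathcal{L}(\Dzs)\cap\mathcal{L}(\Dz)$ gives $\mathcal{D}((A-C^*C)^k)=\mathcal{D}(A^k)$ for $k=1,2$, and then the general fact that a contraction semigroup restricts to a contraction semigroup on the generator's domain (with the graph norm) yields $\|\T^\pm_t\|_{\mathcal{L}(\Dzs)}\leq 1$ directly, so no stabilization or commutator argument is required.
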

\begin{proof}\ 

1. It suffices to take $F=F_h=0$ and $q_{0,h}=\pi_hq_0$ in Proposition \ref{prop-pihq-qh-sch}.

2. We first note that 
\begin{equation}\label{eq-Lt-ineq-tri-sch}
\| \Lb^n_tq_0 - \Lb^n_{h,t} q_0 \| \leq \| \Lb^n_tq_0 - \pi_h\Lb^n_tq_0 \| + \| \pi_h\Lb^n_t q_0 - \Lb^n_{h,t} q_0 \|.
\end{equation}
Using \eqref{eq-est-X} and the fact that $\|\Lb_t\|_{\mathcal{L}({\cal D}(A))} \leq 1$ proved in Lemma \ref{lem-contraction} of the Appendix, the first term in the above relation can be estimated as follows 
\begin{equation}\label{eq-Lt-first-term-sch}
\| \Lb^n_tq_0 - \pi_h\Lb^n_tq_0 \| \leq Mh^\theta\|q_0\|_2, \quad \forall h \in (0,h^*).
\end{equation}
For the second term in \eqref{eq-Lt-ineq-tri-sch}, we prove by induction that for all $n\in\mathbb{N}$
\begin{equation}\label{eq-Lt-second-term-sch}
\| \pi_h\Lb^n_t q_0 - \Lb^n_{h,t} q_0 \| \leq M n\tau h^\theta \| q_0 \|_2, \quad \forall h \in (0,h^*).
\end{equation}
By definition, we have 
$$
\begin{array}{ll}
\| \pi_h\Lb_tq_0 - \Lb_{h,t} q_0 \| &= \| \pi_h\T^-_t\T^+_tq_0 - \T_{h,t}^-\T_{h,t}^+ q_0 \|, \\
&\leq \| \pi_h\T^-_t\T^+_tq_0 - \T_{h,t}^-\T_t^+ q_0 \| + \| \T_{h,t}^-(\T_t^+ q_0 - \T_{h,t}^+ q_0 ) \|.
\end{array}
$$
By Lemma \ref{lem-contraction} of the Appendix and equation \eqref{eq-cor-pihq-qh-backward-sch}, we get 
$$
\| \pi_h\T^-_t\T^+_tq_0 - \T_{h,t}^-\T_t^+ q_0 \| \leq M (\tau-t) h^\theta \|q_0\|_2, \quad \forall h \in (0,h^*).
$$
Obviously $\|\T^-_h\|_{\mathcal{L}(X)}$ is uniformly bounded with respect to $h$ (this follows for example from \eqref{eq-cor-pihq-qh-backward-sch}), and thus by \eqref{eq-est-X} and equation \eqref{eq-cor-pihq-qh-forward-sch}, we have 
$$
\begin{array}{rcl}
\dsp \| \T_{h,t}^-(\T_t^+ q_0 - \T_{h,t}^+ q_0 ) \| &\leq& \dsp \| \T_t^+ q_0 - \pi_h\T_t^+ q_0 \| + \|\pi_h\T_t^+ q_0- \T_{h,t}^+ q_0  \| \\
 & \leq &\dsp  M t h^\theta \|q_0\|_2, \quad \forall h \in (0,h^*).
\end{array}
$$
Consequently
\begin{equation}\label{eq-pih-Lt-sch}
\| \pi_h\Lb_tq_0 - \Lb_{h,t} q_0 \| \leq M \tau h^\theta \|q_0\|_2, \quad \forall h \in (0,h^*),
\end{equation}
which shows that \eqref{eq-Lt-second-term-sch} holds for $n=1$. Suppose now that for a given $n\geq 2$, there holds 
\begin{equation}\label{eq-pih-Lt-rec-sch}
\| \pi_h\Lb^{n-1}_t q_0 - \Lb^{n-1}_{h,t} q_0 \| \leq M (n-1)\tau h^\theta \| q_0 \|_2.
\end{equation}
We write 
$$
\| \pi_h\Lb^n_t q_0 - \Lb^n_{h,t} q_0 \| \leq \| \pi_h \Lb_t \Lb^{n-1}_t q_0 - \Lb_{h,t} \Lb^{n-1}_t q_0 \| + \| \Lb_{h,t}(\Lb^{n-1}_t q_0 - \Lb^{n-1}_{h,t} q_0) \|.
$$
Thanks to Lemma \ref{lem-contraction} and to the uniform boundedness of $\|\Lb_{h,t}\|_{\mathcal{L}(X)}$ with respect to $h$ (which follows from the uniform boundedness of $\|\T_{h,t}^\pm\|$) and using \eqref{eq-pih-Lt-sch} and \eqref{eq-pih-Lt-rec-sch}, we obtain 
$$
\| \pi_h\Lb^n_t q_0 - \Lb^n_{h,t} q_0 \| \leq M ( \tau + (n-1)\tau ) h^\theta \| q_0 \|_2,
$$
which is exactly \eqref{eq-Lt-second-term-sch}. Substituting \eqref{eq-Lt-first-term-sch} and \eqref{eq-Lt-second-term-sch} in \eqref{eq-Lt-ineq-tri-sch}, we obtain the result.
\end{proof}
We are now able to prove Theorem \ref{th-main-sch}. 
\begin{proof}[of Theorem \ref{th-main-sch}]
Introducing the term $\dsp \sum_{n=0}^{N_h} \Lb_{h,\tau}^n z^-(0) $, we rewrite $z_0 - z_{0,h}$ in the following form
$$
\begin{array}{lll}
z_0 - z_{0,h} &= \dsp \sum_{n=0}^\infty \Lb_\tau^n z^-(0) -\sum_{n=0}^{N_h} \Lb_{h,\tau}^n z_{h}^-(0),\\
&= \dsp \sum_{n>N_h} \Lb_\tau^n z^-(0) + \sum_{n=0}^{N_h}\left(\Lb_\tau^n - \Lb_{h,\tau}^n\right) z^-(0)+ \sum_{n=0}^{N_h} \Lb_{h,\tau}^n \left(z^-(0)- z_h^-(0)\right).
\end{array} 
$$
Therefore, we have 
\begin{equation}\label{eq-diff-z0-z0h-sch}
\| z_0 - z_{0,h} \| \leq S_1+S_2+S_3,
\end{equation}
where we have set 
$$
\left\{
\begin{array}{lll}
\dsp S_1= \sum_{n>N_h} \left\| \Lb_\tau^n z^-(0) \right\|,\\
\dsp S_2=\sum_{n=0}^{N_h} \left\| \left(\Lb_\tau^n - \Lb_{h,\tau}^n\right) z^-(0) \right\|,\\
\dsp S_3=\left(\sum_{n=0}^{N_h} \left\| \Lb_{h,\tau}^n \right\|_{\mathcal{L}(X)} \right) \left\| z^-(0) - z_h^-(0) \right\|.
\end{array} 
\right.
$$
Note that the term $S_1$ is the truncation error of the tail of the infinite sum \eqref{eq-z0-neumann}, the term $S_2$ represents the cumulated error due to the approximation of the semigroups $\T^\pm$ while the term $S_3$ comes from the approximation of the first iterate $z^-(0)$ of the algorithm. 

Since $\eta=\|\Lb_\tau\|_{\mathcal{L}(X)} < 1$, using relation \eqref{eq-z0-Ltau}, the first term can be estimated very easily 
\begin{equation}\label{eq-first-term-th-sch}
S_1\leq M\, \frac{\eta^{N_h+1}}{1-\eta} \| z_0 \|_2.
\end{equation}
The term $S_2$ can be estimated using the estimate \eqref{eq-cor-pihq-qh-sch2} from Proposition \ref{prop-L-tau-h-sch} 
$$
S_2 \leq M \left(\sum_{n=0}^{N_h} (1+n \tau)\right) h^\theta \|z^-(0)\|_2, \quad \forall h \in (0,h^*).
$$
Therefore, using \eqref{eq-z0-Ltau} and the fact that $\|\Lb_\tau\|_{\Ds}<1$ (see Lemma \ref{lem-contraction}) in the above relation, we finally get that 
\begin{equation}\label{eq-second-term-th-sch}
S_2 \leq M\Big[1+(1+\tau)N_h+N_h^2\tau \Big] h^\theta \|z_0\|_2, \quad \forall h \in (0,h^*).
\end{equation}
It remains to estimate the term $S_3$. As $\eta=\|\Lb_\tau\|_{\mathcal{L}(X)} < 1$, \eqref{eq-cor-pihq-qh-sch2} implies that $\|\Lb_{h,\tau}\|_{\mathcal{L}(X)}$ is also uniformly with respect to $h$  bounded by 1, provided $h$ is small enough. Hence, we have 
\begin{equation}\label{eq-S3-num1}
\begin{array}{rcl}
S_3 &\leq & \dsp M N_h \left\| z^-(0) - z_h^-(0) \right\|\\
 & \leq &MN_h \dsp\left(\left\| z^-(0) - \pi_h z^-(0)\right\|+ \left\|\pi_h z^-(0)- z_h^-(0)\right\|\right).
\end{array}
\end{equation}
By using \eqref{eq-est-X} and \eqref{eq-z0-Ltau}, we immediately obtain that
\begin{equation}\label{eq-S3-num2}
\left\| z^-(0) - \pi_h z^-(0)\right\| \leq Mh^\theta \|z_0\|_2.
\end{equation}
To estimate the second term $\pi_h z^-(0)- z_h^-(0)$, we apply twice Proposition \ref{prop-pihq-qh-sch} first for the time reversed backward observer $z^-(\tau-\cdot)$ and then for the forward observer $z^+$ (the time reversal step is introduced  as in the formulation of Proposition \ref{prop-pihq-qh-sch}, only initial value Cauchy problems can be considered). After straightforward calculation we obtain that for all $h\in(0,h^*)$ 
\begin{multline}\label{eq-intermediate-main-1-sch}
\left\| \pi_h z^-(0) - z_h^-(0) \right\| \leq M h^\theta\Big[ \tau(\|z^+(\tau)\|_2+ \|C^*y\|_{1, \infty}) + \tau^2\|C^*y\|_{2,\infty} \Big]\\
+ \int_0^\tau\|C^*\left(y(\tau-s)-y_h(\tau-s)\right)\|ds + \int_0^\tau\|C^*\left(y(s)-y_h(s)\right)\|ds.
\end{multline}
Applying \eqref{eq-q-norm-1-2} of Lemma \ref{lem-annexe2} of the Appendix with zero initial data, we obtain that 
$$
\|z^+(\tau)\|_2 \leq \tau \|C^*y\|_{2,\infty}.
$$
Therefore \eqref{eq-intermediate-main-1-sch} also reads
$$
\left\| \pi_h z^-(0) - z_h^-(0) \right\| \leq M h^\theta(\tau + \tau^2)\|C^*y\|_{2,\infty} + 2\int_0^\tau\|C^*\left(y(s)-y_h(s)\right)\|ds.
$$
As $C^*C \in \mathcal{L}\left(\Dzs\right) \cap \mathcal{L}\left(\Dz\right)$ and $\|z\|_{2,\infty}=\|z_0\|_2$ (since $iA_0$ is skew-adjoint), the last relation becomes 
$$
\left\| \pi_h z^-(0) - z_h^-(0) \right\| \leq M h^\theta(\tau + \tau^2)\|z_0\|_2 + 2\int_0^\tau\|C^*\left(y(s)-y_h(s)\right)\|ds.
$$
Substituting the above relation and \eqref{eq-S3-num2} in \eqref{eq-S3-num1}, we get
\begin{equation}\label{eq-third-term-th-sch}
S_3 \leq M N_h \left(h^\theta (1+\tau + \tau^2)\|z_0\|_2 + \int_0^\tau\|C^*\left(y(s)-y_h(s)\right)\|ds \right).
\end{equation}
Substituting \eqref{eq-first-term-th-sch}, \eqref{eq-second-term-th-sch} and \eqref{eq-third-term-th-sch} in \eqref{eq-diff-z0-z0h-sch}, we get for all $h\in(0,h^*)$ 
\begin{multline*}
\| z_0 - z_{0,h} \| \leq M \Bigg[ \left(\frac{\eta^{N_h+1}}{1-\eta} +h^\theta \left[1+(1+\tau+\tau^2)N_h+\tau N_h^2 \right] \right)\| z_0 \|_2 \\
+ N_h \int_0^\tau \| C^*\left(y(s)-y_h(s)\right) \|ds \Bigg],
\end{multline*}
which leads to the result (with possibly reducing the value of $h^*$).
\end{proof}

\subsection{Full Discretization}
\label{Subsct_Schr2}
\subsubsection{Statement of the main result}
In order to approximate \eqref{eq-general-q-FV-sch}, we use a finite difference scheme in time combined with the previous Galerkin approximation in space. In others words, we discretize the time interval $[0,\tau]$ using a time step $\Delta t>0$. We obtain a discretization $t_k = k\Delta t$, where $0\le k \le K$ and where we assumed, without loss of generality, that $\tau =K\Delta t$. Given a continuously differentiable function of time $f$, we approximate its derivative at time $t_k$ by the formula
$$
f'(t_k) \simeq D_tf(t_k) := \frac{f(t_k)-f(t_{k-1})}{\Delta t}.
$$
We suppose that $q_{0,h}\in X_h$ and $F^k_{h}$, for $0\le k \le K$, are given approximations of $q_{0}$ and $F(t_k)$ in the space $X$. We define $(q_{h}^k)$, for $0\le k\le K$, as the solution of the following problem: for all $\varphi_h \in X_h$:
\begin{equation}\label{eq-general-qhk-FVDF-sch}
\left\{\begin{array}{ll}
\left\langle D_tq_{h}^k,\varphi_h\right\rangle = \pm i\left\langle q_{h}^k,\varphi_h \right\rangle_\frac{1}{2} - \left\langle C^*Cq_{h}^k,\varphi_h \right\rangle + \left\langle F_{h}^k,\varphi_h \right\rangle,\\
q_{h}^0=q_{0,h}.
\end{array}\right.
\end{equation}
Note that the above procedure leads to a natural approximation $\T^\pm_{h,\Delta t,k}$ of the continuous semigroup $\T^\pm_{t_k}$ by setting
$$
\T^+_{t_k} q_0 \simeq \T^+_{h,\Delta t,k}q_0 := q^k_{h},\qquad\qquad
\T^-_{t_k} q_0 \simeq \T^-_{h,\Delta t,k}q_0 := q^{K-k}_{h},
$$
where $q^k_{h}$ solves \eqref{eq-general-qhk-FVDF-sch} with $F^k_{h}= 0$ for all $0\le k \le K$  and for $q_{0,h}=\pi_h q_0$. Obviously, this also leads to an approximation of $\Lb_\tau=\T^-_\tau\T^+_\tau$ by setting
$$
\Lb_{h,\Delta t,K} = \T^-_{h,\Delta t,K}\T^+_{h,\Delta t,K}.
$$

Assume that for all $0\le k \le K$, $y_{h}^k$ is a given approximation of $y(t_k)$ in $Y$ and  let $\left(z_{h}^+\right)^k$ and $\left(z_{h}^-\right)^k$ be respectively  the approximations of \eqref{eq-sys-obs-forward-schrodinger} and \eqref{eq-sys-obs-backward-schrodinger}  obtained via \eqref{eq-general-qhk-FVDF-sch} as follows:
\begin{itemize}
\item For all $0\le k \le K$, $\left(z_{h}^+\right)^k=q^k_{h}$ where $q^k_{h}$ solves \eqref{eq-general-qhk-FVDF-sch} with  $F^k_{h} = C^*y_{h}^k$ and $q_{h}^0=0$,
\item For all $0\le k \le K$, $\left(z_{h}^-\right)^k=q^{K-k}_{h}$ where $q^k_{h}$ solves \eqref{eq-general-qhk-FVDF-sch} with $F^k_{h} = C^*y^{K-k}_h $ and $q_{h}^0=(z_h^+)^K$.
\end{itemize}
Then, our main result (which is the fully discrete counterpart of Theorem \ref{th-main-sch}) reads as follows 
\begin{theorem}\label{th-main-full-sch}
Let $A_0 : \Dz \rightarrow X$ be a strictly positive self-adjoint operator and $C \in \mathcal{L}(X,Y)$ such that $C^*C \in \mathcal{L}\left(\Dzs\right)\cap\mathcal{L}\left(\Dz\right)$. We assume that the pair $(iA_0,C)$ is exactly observable in time $\tau>0$. Let $z_0\in \Dzs$ be the initial value of \eqref{eq-sys-initial-schrodinger}. With the above notation, let $z_{0,h,\Delta t}$ be defined by \eqref{eq-z0hdelta-neumann} and denote $\eta:=\|\Lb_\tau\|_{\mathcal{L}(X)} < 1$. Then there exist $M > 0$, $h^* > 0$ and $\Delta t^*>0$ such that for all $h \in (0, h^*)$ and all $\Delta t\in(0,\Delta t^*)$ we have
\begin{multline*}
\| z_0 - z_{0,h,\Delta t} \| \leq M \Bigg[ \left(\frac{\eta^{N_{h,\Delta t}+1}}{1-\eta} +  (h^\theta+\Delta t)(1+\tau) N_{h,\Delta t}^2 \right)\| z_0 \|_2\\
+ N_{h,\Delta t}\Delta t \sum_{\ell=0}^{K} \big\|C^*(y(t_\ell)-y_h^\ell)\big\| \Bigg].
\end{multline*}
\end{theorem}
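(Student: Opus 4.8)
The plan is to follow verbatim the three–term decomposition used in the proof of Theorem \ref{th-main-sch}, replacing each continuous/semi-discrete estimate by its fully discrete counterpart. Inserting $\sum_{n=0}^{N_{h,\Delta t}}\Lb_{h,\Delta t,K}^n z^-(0)$ between $z_0=\sum_{n=0}^\infty\Lb_\tau^n z^-(0)$ and $z_{0,h,\Delta t}$, I would write $z_0-z_{0,h,\Delta t}=\sum_{n>N_{h,\Delta t}}\Lb_\tau^n z^-(0)+\sum_{n=0}^{N_{h,\Delta t}}(\Lb_\tau^n-\Lb_{h,\Delta t,K}^n)z^-(0)+\sum_{n=0}^{N_{h,\Delta t}}\Lb_{h,\Delta t,K}^n(z^-(0)-(z_h^-)^0)$ and bound the three resulting sums $S_1,S_2,S_3$ separately. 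The tail $S_1$ is treated exactly as before: since $\eta=\|\Lb_\tau\|_{\mathcal L(X)}<1$, relation \eqref{eq-z0-Ltau} gives $S_1\le M\,\eta^{N_{h,\Delta t}+1}(1-\eta)^{-1}\|z_0\|_2$, which contributes the first term of the claimed bound.

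The core new ingredient, and the step I expect to be the main obstacle, is a fully discrete analogue of Proposition \ref{prop-pihq-qh-sch} controlling $\|\pi_h q(t_k)-q_h^k\|$ for the scheme \eqref{eq-general-qhk-FVDF-sch}. I would subtract \eqref{eq-general-qhk-FVDF-sch} from the continuous variational formulation evaluated at $t_k$, use $\langle\pi_h q-q,\varphi_h\rangle_{1/2}=0$, and test the error equation against $e_h^k:=\pi_h q(t_k)-q_h^k$, introducing the discrete energy $\mathcal E_h^k=\tfrac12\|e_h^k\|^2$. The purely imaginary term $\pm i\langle e_h^k,e_h^k\rangle_{1/2}$ has zero real part and the term $-\langle C^*C e_h^k,e_h^k\rangle\le0$ is dissipative, which is precisely what makes this implicit (backward-Euler-type) scheme unconditionally stable, so that no CFL condition between $h$ and $\Delta t$ is needed; using $\mathrm{Re}\,\langle e_h^k-e_h^{k-1},e_h^k\rangle\ge\tfrac12(\|e_h^k\|^2-\|e_h^{k-1}\|^2)$ turns the tested identity into a discrete recursion. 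The remaining source terms are the spatial projection errors $\|\pi_h\dot q-\dot q\|$ and $\|\pi_h q-q\|$, already controlled by \eqref{eq-est-X} and the regularity bounds of Lemma \ref{lem-annexe2}, plus the time-consistency error $\|D_t q(t_k)-\dot q(t_k)\|$. The latter is $O(\Delta t)$ by Taylor expansion, but only because $q\in C^2([0,\tau],X)$, a regularity that holds thanks to $z_0\in\Dzs$ and $C^*C\in\mathcal L(\Dzs)\cap\mathcal L(\Dz)$; this is exactly the reason the analysis is restricted to $\Dzs$ (cf. the Remark following Theorem \ref{th-main-sch}). Summing the recursion over the $K=\tau/\Delta t$ steps by a discrete Gr\"onwall argument, the per-step local error of order $\Delta t\,(h^\theta+\Delta t)$ accumulates to a global bound of order $h^\theta+\Delta t$, yielding $\|\pi_h q(t_k)-q_h^k\|\le\|\pi_h q_0-q_{0,h}\|+M(h^\theta+\Delta t)\big[t_k(\|q_0\|_2+\|F\|_{1,\infty})+t_k^2\|F\|_{2,\infty}\big]+\Delta t\sum_{\ell}\|F(t_\ell)-F_h^\ell\|$, the discrete Riemann sum replacing the integral of Proposition \ref{prop-pihq-qh-sch}.

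From this I would derive the operator estimate mirroring Proposition \ref{prop-L-tau-h-sch}. Taking $F=F_h=0$ and $q_{0,h}=\pi_h q_0$ gives $\|\pi_h\T^\pm_{t_k}q_0-\T^\pm_{h,\Delta t,k}q_0\|\le M t_k(h^\theta+\Delta t)\|q_0\|_2$, and then the same two-step triangle inequality and induction on $n$ — using the uniform boundedness of the $\T^\pm_{h,\Delta t,k}$ (a consequence of the stability just established) and the contraction $\|\Lb_t\|_{\mathcal L(\D)}\le1$ from Lemma \ref{lem-contraction} — produce the fully discrete analogue of \eqref{eq-cor-pihq-qh-sch2}, namely $\|\Lb_t^n q_0-\Lb_{h,\Delta t,K}^n q_0\|\le M(1+n\tau)(h^\theta+\Delta t)\|q_0\|_2$. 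This bounds $S_2\le M\big(\sum_{n=0}^{N_{h,\Delta t}}(1+n\tau)\big)(h^\theta+\Delta t)\|z^-(0)\|_2\le M(h^\theta+\Delta t)(1+\tau)N_{h,\Delta t}^2\|z_0\|_2$, after using \eqref{eq-z0-Ltau} and $\|\Lb_\tau\|_{\Ds}<1$.

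For $S_3$, the same estimate shows $\|\Lb_{h,\Delta t,K}\|_{\mathcal L(X)}<1$ once $h$ and $\Delta t$ are small, so $S_3\le M N_{h,\Delta t}\|z^-(0)-(z_h^-)^0\|$. Applying the discrete approximation estimate of the second paragraph twice — first to the forward observer, then to the time-reversed backward observer, exactly as in Theorem \ref{th-main-sch} — yields $\|z^-(0)-(z_h^-)^0\|\le M(h^\theta+\Delta t)(1+\tau+\tau^2)\|z_0\|_2+M\Delta t\sum_{\ell=0}^K\|C^*(y(t_\ell)-y_h^\ell)\|$, where I would again invoke $\|z^+(\tau)\|_2\le\tau\|C^*y\|_{2,\infty}$ together with $C^*C\in\mathcal L(\Dzs)\cap\mathcal L(\Dz)$ and $\|z\|_{2,\infty}=\|z_0\|_2$ to convert the source norms into $\|z_0\|_2$. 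Collecting $S_1$, $S_2$ and $S_3$ in \eqref{eq-diff-z0-z0h-sch}-style and absorbing the lower-order powers of $N_{h,\Delta t}$ into the $N_{h,\Delta t}^2$ term (shrinking $h^*$ and $\Delta t^*$ if necessary) gives precisely the stated bound, with the data-error Riemann sum $N_{h,\Delta t}\Delta t\sum_{\ell=0}^K\|C^*(y(t_\ell)-y_h^\ell)\|$ arising from the $S_3$ estimate.
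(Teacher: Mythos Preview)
Your proposal is correct and follows essentially the same route as the paper: the identical three-term decomposition $S_1+S_2+S_3$, the same discrete-energy argument for the fully discrete analogue of Proposition~\ref{prop-pihq-qh-sch} (which the paper states as Proposition~\ref{prop-pihq-qhk-sch}), and the same induction for the operator estimate (Proposition~\ref{prop-L-tau-hk-sch}). The only detail you glossed over is that the Taylor residual $\tfrac{1}{\Delta t}\|r_1(t_k)\|\le \Delta t\sup\|\ddot q\|$ also produces a $\|\dot F\|_\infty$ contribution in the intermediate bound (since $\ddot q$ involves $\dot F$), but as you correctly anticipate this is absorbed into $\|z_0\|_2$ in the final application via $C^*C\in\mathcal L(\Dz)$ and the isometry of the group generated by $iA_0$.
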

\begin{corollary}
Under the assumptions of Theorem \ref{th-main-full-sch}, we set 
$$
N_{h,\Delta t} = \frac{\ln (h^\theta + \Delta t)}{\ln \eta}
$$
Then, there exist $ M_\tau > 0 $, $ h^* > 0 $ and $\Delta t^*>0$ such that for all $h \in (0, h^*)$ and $\Delta t\in(0,\Delta t^*)$ 
\begin{multline}\label{eqz0z0hk}
\| z_0 - z_{0,h,\Delta t} \| \leq M_\tau \bigg[(h^\theta+\Delta t) \ln^2 (h^\theta+\Delta t) \| z_0 \|_2\\
+ \left|\ln (h^\theta + \Delta t)\right| \Delta t \sum_{\ell=0}^{K}\big\|C^*(y(t_\ell)-y_h^\ell)\big\|  \bigg].
\end{multline}
\end{corollary}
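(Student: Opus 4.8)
The plan is to reproduce, at the fully discrete level, the three-term architecture of the proof of Theorem \ref{th-main-sch}. Using the Neumann expansion \eqref{eq-z0-neumann} and the definition \eqref{eq-z0hdelta-neumann}, I would introduce the intermediate partial sum $\dsp\sum_{n=0}^{N_{h,\Delta t}}\Lb_{h,\Delta t,K}^n z^-(0)$ and split
$$
z_0 - z_{0,h,\Delta t} = \sum_{n>N_{h,\Delta t}}\Lb_\tau^n z^-(0) + \sum_{n=0}^{N_{h,\Delta t}}\left(\Lb_\tau^n - \Lb_{h,\Delta t,K}^n\right)z^-(0) + \sum_{n=0}^{N_{h,\Delta t}}\Lb_{h,\Delta t,K}^n\left(z^-(0)-(z_h^-)^0\right),
$$
so that $\|z_0 - z_{0,h,\Delta t}\| \leq S_1 + S_2 + S_3$ exactly as in \eqref{eq-diff-z0-z0h-sch}. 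The truncation term $S_1$ is unchanged and is again controlled by $M\,\eta^{N_{h,\Delta t}+1}/(1-\eta)\,\|z_0\|_2$ via \eqref{eq-z0-Ltau}. The whole difficulty is therefore to obtain the fully discrete analogues of Proposition \ref{prop-pihq-qh-sch} (single Cauchy solve) and of the iterate estimate \eqref{eq-cor-pihq-qh-sch2}, now with $h^\theta$ replaced by $h^\theta + \Delta t$, which feed $S_2$ and $S_3$ respectively.

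The crux is the fully discrete one-step energy estimate. I would set $e^k = \pi_h q(t_k) - q_h^k$, subtract \eqref{eq-general-qhk-FVDF-sch} from the projected variational identity for the exact solution at $t_k$, and insert $\pi_h$ as in \eqref{eq-tech-pihq-qh-sch}; using $\langle \pi_h q - q,\varphi_h\rangle_\frac{1}{2}=0$ this produces a residual $\rho^k = D_t(\pi_h q)(t_k)-\dot q(t_k)$ that splits into the \emph{spatial} projection error $\pi_h\dot q(t_k)-\dot q(t_k)$ (treated by \eqref{eq-est-X} and Lemma \ref{lem-annexe2} as in the semi-discrete case) and the \emph{temporal} consistency error $\pi_h\big(D_t q(t_k)-\dot q(t_k)\big)$. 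Testing with $\varphi_h=e^k$ and taking real parts annihilates the skew term $\pm i\langle e^k, e^k\rangle_\frac{1}{2}$, while the $C^*C$ term is handled as in the continuous case (its diagonal part $-\|Ce^k\|^2$ being dissipative). Since $\mathrm{Re}\,\langle e^k-e^{k-1}, e^k\rangle \geq \|e^k\|\big(\|e^k\|-\|e^{k-1}\|\big)$ for the backward-Euler difference, this plays the role of the continuous identity $\dot{\mathcal{E}}_h/\sqrt{2\mathcal{E}_h}=\frac{d}{dt}\sqrt{2\mathcal{E}_h}$ and yields $\|e^k\|-\|e^{k-1}\| \leq \Delta t\big(\|\rho^k\| + M\|\pi_h q(t_k)-q(t_k)\| + \|F(t_k)-F_h^k\|\big)$. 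Summing over $k$ gives the discrete Gronwall bound, with the source error producing the Riemann sum $\Delta t\sum_{\ell}\|F(t_\ell)-F_h^\ell\|$ in place of the integral of Proposition \ref{prop-pihq-qh-sch}.

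The point where the hypothesis $z_0\in\Dzs$ becomes essential, and where I expect the main obstacle, is the control of the temporal part of $\rho^k$. A Taylor expansion gives a bound of order $\Delta t\,\dsp\sup_{t\in[0,\tau]}\|\ddot q(t)\|$, so one must estimate $\ddot q$ in $X$. Differentiating \eqref{eq-general-q-sch} twice shows $\ddot q = \pm i A_0\dot q - C^*C\dot q + \dot F$, which lives in $X$ only if $\dot q(t)\in\Dz$, i.e. if $q(t)\in\Dzs$; this is precisely the regularity afforded by $z_0\in\Dzs$ together with $C^*C\in\mathcal{L}(\Dz)$, and it is what forces the restriction announced in the Remark. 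I would therefore complement Lemma \ref{lem-annexe2} with an estimate of the form $\dsp\sup_{t\in[0,\tau]}\|\ddot q(t)\| \leq M\big(\|q_0\|_2 + \|F\|_{1,\infty} + \|F\|_{2,\infty}\big)$ to close this step, producing the $\Delta t$-contribution of the error.

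Once the one-step estimate is in hand, the discrete semigroup bounds $\|\pi_h\T^\pm_{t_k}q_0 - \T^\pm_{h,\Delta t,k}q_0\| \leq M\,t_k\,(h^\theta + \Delta t)\|q_0\|_2$ follow by taking null source and $q_{0,h}=\pi_h q_0$, and the iterate estimate $\|\Lb_\tau^n q_0 - \Lb_{h,\Delta t,K}^n q_0\| \leq M(1+n\tau)(h^\theta+\Delta t)\|q_0\|_2$ follows by the same telescoping induction as in Proposition \ref{prop-L-tau-h-sch}, using Lemma \ref{lem-contraction} and the uniform contractivity $\|\Lb_{h,\Delta t,K}\|_{\mathcal{L}(X)}\leq 1$ valid for $h,\Delta t$ small. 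I would then assemble the pieces: $S_2$ is bounded by $M(h^\theta+\Delta t)\sum_{n=0}^{N_{h,\Delta t}}(1+n\tau)\|z_0\|_2$, giving the $N_{h,\Delta t}^2$ growth, and $S_3$ is bounded, after applying the one-step estimate twice to the discrete forward and time-reversed backward observers as in \eqref{eq-intermediate-main-1-sch} and using $\|z\|_{2,\infty}=\|z_0\|_2$, by $M N_{h,\Delta t}\big[(h^\theta+\Delta t)(1+\tau+\tau^2)\|z_0\|_2 + \Delta t\sum_{\ell=0}^{K}\|C^*(y(t_\ell)-y_h^\ell)\|\big]$. Collecting $S_1+S_2+S_3$ and absorbing lower-order powers of $N_{h,\Delta t}$ into the quadratic term yields the stated estimate.
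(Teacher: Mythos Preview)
Your write-up is a faithful and essentially correct sketch of the paper's proof of \emph{Theorem \ref{th-main-full-sch}}, not of the Corollary. The three-term splitting, the fully discrete one-step energy estimate with the backward-Euler trick $\mathrm{Re}\,\langle e^k-e^{k-1},e^k\rangle \geq \|e^k\|(\|e^k\|-\|e^{k-1}\|)$, the control of $\ddot q$ via $z_0\in\Dzs$, and the telescoping induction for the iterates all match the paper's Propositions \ref{prop-pihq-qhk-sch}--\ref{prop-L-tau-hk-sch} and the assembly in the proof of Theorem \ref{th-main-full-sch}. (One small slip: your proposed bound for $\sup_t\|\ddot q(t)\|$ omits the $\|\dot F\|_\infty$ term that necessarily appears from $\ddot q = \pm iA_0\dot q - C^*C\dot q + \dot F$; the paper keeps it, cf.\ Proposition \ref{prop-pihq-qhk-sch}.)

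However, the Corollary itself is not this argument at all: in the paper it is a one-line consequence of Theorem \ref{th-main-full-sch}, obtained by substituting the prescribed value $N_{h,\Delta t}=\ln(h^\theta+\Delta t)/\ln\eta$ into the theorem's bound. You never use this choice. The whole content of the Corollary is precisely that substitution: $\eta^{N_{h,\Delta t}} = h^\theta+\Delta t$ kills the tail term, $N_{h,\Delta t}^2$ becomes a constant times $\ln^2(h^\theta+\Delta t)$, and $N_{h,\Delta t}$ becomes $|\ln(h^\theta+\Delta t)|/|\ln\eta|$, after which the three contributions combine (absorbing the $\tau$-dependence into $M_\tau$) into \eqref{eqz0z0hk}. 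Your final sentence ``yields the stated estimate'' stops at the conclusion of the Theorem, with $N_{h,\Delta t}$ still a free parameter; the step that actually proves the Corollary is missing.
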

\begin{remark}
Contrarily to the semi-discrete case, we have not been able to extend our results for $z_0$ in a larger space than $\Dzs$. 
\end{remark}
\subsubsection{Proof of Theorem \ref{th-main-full-sch}}
The proof of Theorem \ref{th-main-full-sch} goes along the same lines as the one of Theorem \ref{th-main-sch} in the semi-discrete case and uses energy estimates similar to those developed in Fujita and Suzuki \cite[p.\, 865]{FujSuz91}. The main ingredient for the convergence analysis is the following result (the counterpart of Proposition \ref{prop-pihq-qh-sch}) which gives the error estimate for the approximation (in space and time) of system \eqref{eq-general-q-FV-sch} by \eqref{eq-general-qhk-FVDF-sch}.
\begin{proposition}\label{prop-pihq-qhk-sch}
Given initial states $q_0\in\Dzs$ and $q_{0,h} \in X_h$, let $q$ and $q_{h}^k$, for $0\le k \le K$, be respectively the solutions of \eqref{eq-general-q-FV-sch} and \eqref{eq-general-qhk-FVDF-sch}. Assume that $C^*C \in \mathcal{L}\left(\Dz\right)$. Then, there exist $M>0$, $h^*>0$ and $\Delta t^*>0$ such that for all $h\in(0,h^*)$, all $\Delta t\in(0,\Delta t^*)$ and all  $0\le k \le K$:
\begin{multline*}
\| \pi_hq(t_k)-q_{h}^k \| \leq \| \pi_hq_0 - q_{0,h} \| + M \bigg\{\Delta t \sum_{\ell=1}^{k}\|F(t_\ell)-F_{h}^\ell\|\\
+\left(h^\theta + \Delta t \right)\Big[ t_k\big(\|q_0\|_2 + \|F\|_{1, \infty} 
+ \|\dot{F}\|_\infty\big) + t_k^2\|F\|_{2,\infty} \Big] \bigg\}.
\end{multline*}
\end{proposition}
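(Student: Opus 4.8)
The plan is to follow the semi-discrete argument of Proposition~\ref{prop-pihq-qh-sch}, replacing the continuous energy identity by its backward-difference analogue in the spirit of Fujita and Suzuki \cite{FujSuz91}. Writing $e^k=\pi_hq(t_k)-q_h^k\in X_h$ for the error at the discrete times, I would first derive the error equation: evaluating the exact variational formulation \eqref{eq-general-q-FV-sch} at $t=t_k$, testing against $\varphi_h\in X_h$, using the Galerkin orthogonality $\langle \pi_hq(t_k)-q(t_k),\varphi_h\rangle_\frac{1}{2}=0$ and subtracting the scheme \eqref{eq-general-qhk-FVDF-sch} yields, for all $\varphi_h\in X_h$,
$$\langle D_te^k,\varphi_h\rangle = \pm i\langle e^k,\varphi_h\rangle_\frac{1}{2} - \langle C^*Ce^k,\varphi_h\rangle + \langle R^k,\varphi_h\rangle,$$
where the consistency residual $R^k$ is the sum of four terms: a spatial projection part $\pi_hD_tq(t_k)-D_tq(t_k)$, a time-truncation part $D_tq(t_k)-\dot q(t_k)$, a zeroth-order projection part $-C^*C(q(t_k)-\pi_hq(t_k))$, and the data part $F(t_k)-F_h^k$.

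Next I would run the discrete energy method. Taking $\varphi_h=e^k$ and real parts annihilates the skew term, since $\langle e^k,e^k\rangle_\frac{1}{2}=\|e^k\|_\frac{1}{2}^2$ is real, and leaves the dissipative contribution $\mbox{Re}\,\langle C^*Ce^k,e^k\rangle=\|Ce^k\|^2\geq0$, exactly as in the continuous case. The crucial discrete identity is
$$2\,\mbox{Re}\,\langle e^k-e^{k-1},e^k\rangle = \|e^k\|^2-\|e^{k-1}\|^2+\|e^k-e^{k-1}\|^2 \geq \|e^k\|^2-\|e^{k-1}\|^2,$$
which gives $\|e^k\|^2-\|e^{k-1}\|^2\leq 2\Delta t\,\|R^k\|\,\|e^k\|$ and, after dividing by $\|e^k\|+\|e^{k-1}\|$, the telescoping bound $\|e^k\|\leq\|e^0\|+M\Delta t\sum_{\ell=1}^k\|R^\ell\|$. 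Here the implicit (backward) character of \eqref{eq-general-qhk-FVDF-sch} is an asset: the surplus term $\|e^k-e^{k-1}\|^2\geq0$ makes the scheme unconditionally stable, so no CFL condition is required and $\Delta t^*$ enters only through the consistency estimates.

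It then remains to bound $\Delta t\sum_{\ell=1}^k\|R^\ell\|$ termwise. The data part contributes $\Delta t\sum_{\ell=1}^k\|F(t_\ell)-F_h^\ell\|$ directly. For the two projection parts I would invoke \eqref{eq-est-X} and the continuous embeddings ${\cal D}(A_0^\alpha)\hookrightarrow{\cal D}(A_0^\beta)$ for $\alpha>\beta$, obtaining $\|(\pi_h-I)D_tq(t_\ell)\|\leq Mh^\theta\|D_tq(t_\ell)\|_\frac{1}{2}\leq Mh^\theta\|\dot q\|_{1,\infty}$ and $\|C^*C(q(t_\ell)-\pi_hq(t_\ell))\|\leq Mh^\theta\|q(t_\ell)\|_2$, both controlled through the regularity estimates \eqref{eq-q-norm-1-2} and \eqref{eq-dotq-norm-1} of Lemma~\ref{lem-annexe2}. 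For the time-truncation part, a Taylor expansion gives $\|D_tq(t_\ell)-\dot q(t_\ell)\|\leq M\Delta t\|\ddot q\|_\infty$. Summing over $\ell$ and using $\Delta t\,k=t_k$ turns these pointwise-in-time bounds into the announced $t_k$ and $t_k^2$ prefactors, the factor $(h^\theta+\Delta t)$ arising by absorbing $\Delta t\leq h^\theta+\Delta t$.

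The genuinely new difficulty, absent from Proposition~\ref{prop-pihq-qh-sch}, is the control of $\|\ddot q\|_\infty$. I would obtain it by differentiating \eqref{eq-general-q-sch} once in time, which gives $\ddot q=\pm iA_0\dot q-C^*C\dot q+\dot F$, hence $\|\ddot q\|\leq M\|\dot q\|_1+\|\dot F\|$; feeding in the bound on $\|\dot q\|_{1,\infty}$ from Lemma~\ref{lem-annexe2} produces precisely the term $\|\dot F\|_\infty$ that appears in the statement but not in the semi-discrete case. Collecting all contributions, bounding $\|e^0\|=\|\pi_hq_0-q_{0,h}\|$, and possibly shrinking $h^*$ and $\Delta t^*$, then yields the claimed inequality. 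I expect the main technical work to lie in this second-order temporal consistency estimate, while the discrete energy argument, thanks to the favourable sign structure inherited from the continuous problem, should go through essentially as in the semi-discrete proof.
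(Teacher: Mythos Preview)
Your proposal is correct and follows essentially the same route as the paper: the backward-difference energy identity \`a la Fujita--Suzuki, testing with $e^k$ to kill the skew term and drop the dissipation, telescoping after dividing by $\|e^k\|+\|e^{k-1}\|$, and bounding the residual via the projection estimate \eqref{eq-est-X}, Lemma~\ref{lem-annexe2}, and a Taylor expansion whose $\|\ddot q\|_\infty$ control is obtained by differentiating \eqref{eq-general-q-sch}. The only cosmetic difference is bookkeeping: the paper keeps the Taylor remainder $r_1(t_k)$ as a single object and bounds it separately in $\|\cdot\|$ and $\|\cdot\|_\frac{1}{2}$ (the latter because it writes $D_tq=\dot q+\frac{1}{\Delta t}r_1$ before estimating $\|D_tq\|_\frac{1}{2}$), whereas you bound $\|D_tq(t_\ell)\|_\frac{1}{2}\le\|\dot q\|_{\frac{1}{2},\infty}$ directly via the integral mean, which is slightly cleaner but amounts to the same thing.
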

\begin{proof}
Let $r_1(t_k)$ denote the residual term in the first order Taylor expansion of $q$ around $t_{k-1}$, so that
\begin{equation}\label{eq-taylor-sch}
\dot{q}(t_k) = \frac{q(t_k)-q(t_{k-1})}{\Delta t} - \frac{1}{\Delta t}r_1(t_k) = D_tq(t_k) - \frac{1}{\Delta t}r_1(t_k),
\end{equation}
Subtracting \eqref{eq-general-qhk-FVDF-sch} from the continuous weak formulation \eqref{eq-general-q-FV-sch} applied for $t=t_k$ and for an arbitrary test function $\varphi=\varphi_h\in X_h$, we immediately get by using \eqref{eq-taylor-sch} that for all $1\le k \le K$ 
\begin{multline*}
\left\langle D_t\left(q(t_k)-q_{h}^k\right),\varphi_h\right\rangle = 
\pm i\left\langle \pi_h q(t_k),\varphi_h\right\rangle_\frac{1}{2} - \left\langle C^*C\left(q(t_k)-q_{h}^k\right),\varphi_h\right\rangle \\
+ \frac{1}{\Delta t} \left\langle r_1(t_k),\varphi_h\right\rangle + \left\langle F(t_k)-F_{h}^k,\varphi_h\right\rangle.
\end{multline*}
The above relation implies that
\begin{multline}\label{eq-tech-pihq-qhk-sch}
\left\langle D_t\left(\pi_h q(t_k)-q_{h}^k\right),\varphi_h\right\rangle = \left\langle D_t\left(\pi_h q(t_k)-q(t_k)\right),\varphi_h\right\rangle \\
\pm i\left\langle \pi_h q(t_k)-q_{h}^k,\varphi_h\right\rangle_\frac{1}{2} - \left\langle C^*C\left(q(t_k)-q_{h}^k\right),\varphi_h\right\rangle \\
+ \frac{1}{\Delta t} \left\langle r_1(t_k),\varphi_h\right\rangle + \left\langle F(t_k)-F_{h}^k,\varphi_h\right\rangle .
\end{multline}
Now, for all $1\le k \le K$, let
$$
\mathcal{E}_{h}^k = \frac{1}{2} \|\pi_h q(t_k)-q_{h}^k\|^2.
$$
Using the identity 
$$
\frac{1}{2} \left( \|u\|^2 - \|v\|^2 + \|u-v\|^2 \right) = \mbox{Re} \, \left\langle u-v,u \right\rangle, \quad \forall u,v \in X,
$$
one easily obtains that for all $1\le k \le K$
$$
D_t\mathcal{E}_{h}^k \leq \mbox{Re} \, \left\langle D_t\left(\pi_h q(t_k)-q_{h}^k\right),\pi_h q(t_k)-q_{h}^k\right\rangle.
$$
Substituting \eqref{eq-tech-pihq-qhk-sch} with $\varphi_h = \pi_h q(t_k)-q_{h}^k$ in the above inequality and using the boundedness of $C$, we obtain the existence of $M>0$ such that for all $1\le k \le K$
\begin{multline}\label{eq-intermediate-full-1-sch}
D_t\mathcal{E}_{h}^k \leq \big[\|D_t\left(\pi_hq(t_k)-q(t_k)\right)\| + M \|\pi_hq(t_k)-q(t_k)\| \\
+\frac{1}{\Delta t} \|r_1(t_k)\| + \|F(t_k)-F_{h}^k\|\big]\|\pi_hq(t_k)-q_{h}^k\|.
\end{multline}
Using the straightforward relations
\begin{equation}\label{eq-DtId-1}
D_t\mathcal{E}_{h}^k = \left(D_t\sqrt{\mathcal{E}_{h}^k}\right)\, \left(\sqrt{\mathcal{E}_{h}^k}+\sqrt{\mathcal{E}_{h}^{k-1}}\right),
\end{equation}
and 
\begin{equation}\label{eq-DtId-2}
\|\pi_h q(t_k)-q_{h}^k\| \leq \sqrt{2}\left(\sqrt{\mathcal{E}_{h}^k}+\sqrt{\mathcal{E}_{h}^{k-1}}\right),
\end{equation}
we obtain from \eqref{eq-est-X} and \eqref{eq-intermediate-full-1-sch} that for all $h\in(0,h^*)$ 
$$
D_t\sqrt{\mathcal{E}_{h}^k} \leq M\left\{h^\theta \left( \|D_tq(t_k)\|_\frac{1}{2} + \|q(t_k)\|_\frac{1}{2} \right) + \frac{1}{\Delta t} \|r_1(t_k)\| + \|F(t_k)-F_{h}^k\|\right\}.
$$
By \eqref{eq-taylor-sch} and relations \eqref{eq-q-norm-1-2} and \eqref{eq-dotq-norm-1} in Lemma \ref{lem-annexe2} of the Appendix, the last estimate yields
\begin{multline}\label{eq-intermediate-full-2-sch}
\hspace{2cm}D_t\sqrt{\mathcal{E}_{h}^k} \leq M\bigg\{h^\theta \left( \|q_0\|_2 + t_k\|F\|_{2,\infty} + \|F\|_{1,\infty} \right)  \\
+ \|F(t_k)-F_{h}^k\|+ \frac{h^\theta}{\Delta t}\|r_1(t_k)\|_\frac{1}{2} + \frac{1}{\Delta t}\|r_1(t_k)\|\bigg\}.
\end{multline}
To conclude, it remains to bound the two last terms in the above estimate. By definition of $r_1$, we have
$$
r_1(t_k) = q(t_{k-1})-q(t_k) + \Delta t \,\dot{q}(t_k),
$$
in $\Dzdemi$, and thus by the mean value theorem, we get
$$
\|r_1(t_k)\|_\frac{1}{2} \leq \Delta t \sup_{s\in\left[t_{k-1},t_k\right]}\|\dot{q}(s)\|_\frac{1}{2} + \Delta t\|\dot{q}(t_k)\|_\frac{1}{2}.
$$
Using once again \eqref{eq-dotq-norm-1}, we obtain that there exists $M>0$ such that
\begin{equation}\label{eq-intermediate-full-3-sch}
\|r_1(t_k)\|_\frac{1}{2} \leq M \Delta t \left( \|q_0\|_2 + t_k\|F\|_{2,\infty} + \|F\|_{1,\infty} \right).
\end{equation}
Now by the regularity of $q$ (see Lemma \ref{lem-annexe2}), the residual $r_1$ can be expressed via the integral 
$$
r_1(t_k) = \int_{t_{k-1}}^{t_k} \ddot{q}(s)\left(t_{k-1}-s\right)ds,
$$
in $X$, and thus 
$$
\|r_1(t_k)\| \leq \Delta t^2 \sup_{s\in\left[t_{k-1},t_k\right]}\|\ddot{q}(s)\|.
$$
Using equation \eqref{eq-general-q-sch} verified by $q$ and the boundedness of $C$, we have
$$
\begin{array}{lll}
\|\ddot{q}(t)\| = \Big\|\dfrac{d\dot{q}}{dt}(t) \Big\|&= \Big\|\dfrac{d}{dt}\Big\{\pm i A_0 q(t) -C^*Cq(t) + F(t) \Big\} \Big\|, \\
&\leq \|\dot{q}(t)\|_1 + M \|\dot{q}(t)\| + \|\dot{F}(t)\|.
\end{array}
$$
Hence, once again by \eqref{eq-dotq-norm-1}, we get 
\begin{equation}\label{eq-intermediate-full-4-sch}
\|r_1(t_k)\| \leq \Delta t^2 \left( \|q_0\|_2 + t_k\|F\|_{2,\infty} + \|F\|_{1,\infty} + \|\dot{F}\|_\infty\right).
\end{equation}
Substituting inequalities \eqref{eq-intermediate-full-3-sch} and \eqref{eq-intermediate-full-4-sch} in relation \eqref{eq-intermediate-full-2-sch} provides estimates for $D_t\sqrt{\mathcal{E}_{h}^k}=\dfrac{\sqrt{\mathcal{E}_{h}^k}-\sqrt{\mathcal{E}_{h}^{k-1}}}{\Delta t}$, for $k=1,\dots,K$, that can be added together to get  the desired inequality (since $\|\pi_hq(t_k)-q^k_h\|=\sqrt{2\mathcal{E}_{h}^k} $).
\end{proof}
Using this Proposition, we can derive an error estimate for the semigroup $\T^\pm_{t_k}$ (for all $1\le k\le K$) and for the operator $\Lb_\tau = \T^-_\tau\T^+_\tau$ (the counterpart of Proposition \ref{prop-L-tau-h-sch}).
\begin{proposition}\label{prop-L-tau-hk-sch}
Under the assumptions of Proposition \ref{prop-pihq-qhk-sch}, the following assertions hold true
\begin{enumerate}
\item There exist $M>0$, $h^*>0$ and $\Delta t^*>0$ such that for all $h\in(0,h^*)$, all $\Delta t\in(0,\Delta t^*)$ and all $0\le k\le K$
\begin{equation}\label{eq-cor-pihq-qhk-forward-sch}
\left\|\pi_h\T^+_{t_k} q_0 - \T^+_{h,\Delta t, k} q_0\right\| \leq M t_k(h^\theta + \Delta t) \|q_0\|_2.
\end{equation}
\begin{equation}\label{eq-cor-pihq-qhk-backward-sch}
\left\|\pi_h\T^-_{t_k} q_0 - \T^-_{h,\Delta t, k} q_0\right\| \leq M (\tau-t_k)(h^\theta + \Delta t) \|q_0\|_2.
\end{equation}

\item There exist $M>0$, $h^*>0$ and $\Delta t^*>0$ such that for all $n\in\mathbb{N}$, all $h\in(0,h^*)$, all $\Delta t\in(0,\Delta t^*)$ and all $0\le k\le K$
\begin{equation}\label{eq-cor-pihq-qhk-sch2}
\| (\Lb_{t_k}^n - \Lb_{h,\Delta t,k}^n) q_0 \| \leq M \left[ h^\theta + n \tau\left(h^\theta+\Delta t\right) \right] \| q_0 \|_2.
\end{equation}

\end{enumerate}
\end{proposition}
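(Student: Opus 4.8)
The plan is to follow the proof of Proposition~\ref{prop-L-tau-h-sch} almost verbatim, the role of the purely spatial estimate \eqref{eq-cor-pihq-qh-sch2} being now played by the space--time estimate of Proposition~\ref{prop-pihq-qhk-sch}; the only new feature is the extra $\Delta t$ contribution produced by the finite difference scheme in time. For Part~1, I would obtain \eqref{eq-cor-pihq-qhk-forward-sch} and \eqref{eq-cor-pihq-qhk-backward-sch} directly from Proposition~\ref{prop-pihq-qhk-sch} by taking $F=F_h^k=0$ and $q_{0,h}=\pi_hq_0$. All source terms $\|F\|_{1,\infty}$, $\|\dot F\|_\infty$ and $\|F\|_{2,\infty}$ then vanish, leaving $\|\pi_hq(t_k)-q_h^k\|\le Mt_k(h^\theta+\Delta t)\|q_0\|_2$. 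Since $\T^+_{h,\Delta t,k}q_0=q_h^k$ this is \eqref{eq-cor-pihq-qhk-forward-sch}, while for the backward semigroup $\T^-_{h,\Delta t,k}q_0=q_h^{K-k}$ corresponds to the discrete time $t_{K-k}=\tau-t_k$, which produces the factor $(\tau-t_k)$ in \eqref{eq-cor-pihq-qhk-backward-sch}.

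For Part~2, I would start from the triangle inequality
$$
\|(\Lb_{t_k}^n-\Lb_{h,\Delta t,k}^n)q_0\|\le\|\Lb_{t_k}^nq_0-\pi_h\Lb_{t_k}^nq_0\|+\|\pi_h\Lb_{t_k}^nq_0-\Lb_{h,\Delta t,k}^nq_0\|.
$$
The first term is bounded by $Mh^\theta\|q_0\|_2$ using \eqref{eq-est-X} and the contraction $\|\Lb_{t_k}\|_{\mathcal{L}(\Ds)}\le1$ of Lemma~\ref{lem-contraction}; notice that no $\Delta t$ enters here because only the exact operator and its projection appear, which is precisely why the term $h^\theta$ sits isolated in \eqref{eq-cor-pihq-qhk-sch2}. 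For the second term I would prove by induction on $n$ that it is at most $Mn\tau(h^\theta+\Delta t)\|q_0\|_2$. The base case $n=1$ follows by inserting $\T^-_{h,\Delta t,k}\T^+_{t_k}q_0$ and splitting
$$
\|\pi_h\Lb_{t_k}q_0-\Lb_{h,\Delta t,k}q_0\|\le\|\pi_h\T^-_{t_k}\T^+_{t_k}q_0-\T^-_{h,\Delta t,k}\T^+_{t_k}q_0\|+\|\T^-_{h,\Delta t,k}(\T^+_{t_k}q_0-\T^+_{h,\Delta t,k}q_0)\|,
$$
bounding the first piece by \eqref{eq-cor-pihq-qhk-backward-sch} applied to $\T^+_{t_k}q_0$ (whose $\Ds$-norm is controlled by $M\|q_0\|_2$, again by Lemma~\ref{lem-contraction}) and the second by \eqref{eq-est-X} together with \eqref{eq-cor-pihq-qhk-forward-sch}; since $t_k+(\tau-t_k)=\tau$, this gives the bound $M\tau(h^\theta+\Delta t)\|q_0\|_2$. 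The induction step is identical to the semi-discrete one: I would write $\Lb^n=\Lb\,\Lb^{n-1}$, apply the case $n=1$ to $\Lb_{t_k}^{n-1}q_0$ and push $\Lb_{h,\Delta t,k}$ through the induction hypothesis, and then sum the two terms to obtain \eqref{eq-cor-pihq-qhk-sch2}.

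The only ingredient that must be controlled uniformly in both $h$ and $\Delta t$ is the $\mathcal{L}(X)$-boundedness of $\T^\pm_{h,\Delta t,k}$, hence of $\Lb_{h,\Delta t,k}$, which is what allows the discrete operator to be pushed through the induction. Fortunately this is unconditional for the implicit scheme \eqref{eq-general-qhk-FVDF-sch}: testing with $\varphi_h=q_h^k$ when $F_h^k=0$ and taking real parts, the term $\pm i\langle q_h^k,q_h^k\rangle_{\frac12}$ is purely imaginary while $-\langle C^*Cq_h^k,q_h^k\rangle=-\|Cq_h^k\|^2\le0$, so the identity $\frac12(\|u\|^2-\|v\|^2+\|u-v\|^2)=\mbox{Re}\,\langle u-v,u\rangle$ with $u=q_h^k$ and $v=q_h^{k-1}$ forces $\|q_h^k\|\le\|q_h^{k-1}\|\le\cdots\le\|\pi_hq_0\|\le\|q_0\|$, whence $\|\T^\pm_{h,\Delta t,k}\|_{\mathcal{L}(X)}\le1$ and $\|\Lb_{h,\Delta t,k}\|_{\mathcal{L}(X)}\le1$ for every $h$ and $\Delta t$. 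I therefore expect the main difficulty to be not conceptual but purely a matter of bookkeeping: carrying the $(h^\theta+\Delta t)$ factor cleanly through the induction, and checking the regularity fact that $\T^+_{t_k}q_0$ remains in $\Ds$ with norm controlled by $\|q_0\|_2$, so that the backward estimate \eqref{eq-cor-pihq-qhk-backward-sch} may legitimately be applied to it.
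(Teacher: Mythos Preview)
Your proposal is correct and follows essentially the same route as the paper's own proof: Part~1 is obtained from Proposition~\ref{prop-pihq-qhk-sch} with vanishing source and projected initial data, and Part~2 proceeds by the same triangle-inequality splitting followed by induction on $n$. The only cosmetic difference is that in the base case $n=1$ the paper inserts the intermediate term $\T^-_{h,\Delta t,k}\pi_h\T^+_{t_k}q_0$ (with the projection explicit) whereas you insert $\T^-_{h,\Delta t,k}\T^+_{t_k}q_0$; since $\T^-_{h,\Delta t,k}$ is defined to project its input onto $X_h$ anyway, the two decompositions coincide. Your direct energy argument for the uniform $\mathcal{L}(X)$-contractivity of $\T^\pm_{h,\Delta t,k}$ is a welcome addition --- the paper merely asserts this boundedness as ``obvious''.
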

\begin{proof}\ 

1. It suffices to apply Proposition \ref{prop-pihq-qhk-sch} with $F(t_k)=F_{h}^k=0$ for all $0\le k\le K$ and $q_{0,h,\Delta t}=\pi_hq_0$.

2. First, we note that 
\begin{equation}\label{eq-Lt-ineq-tri-full-sch}
\| \Lb^n_{t_k}q_0 - \Lb^n_{h,\Delta t,k} q_0 \| \leq \| \Lb^n_{t_k}q_0 - \pi_h\Lb^n_{t_k}q_0 \| + \| \pi_h\Lb^n_{t_k} q_0 - \Lb^n_{h,\Delta t, k} q_0 \|.
\end{equation}
Using \eqref{eq-est-X}, the fact that $\|\Lb^n_t\|_{\mathcal{L}({\cal D}(A))} \leq 1$ (proved in Lemma \ref{lem-contraction} of the Appendix), the first term in the above relation can be estimated as follows 
\begin{equation}\label{eq-Lt-first-term-full-sch}
\| \Lb^n_{t_k}q_0 - \pi_h\Lb^n_{t_k}q_0 \| \leq Mh^\theta\|q_0\|_2, \quad \forall h \in (0,h^*).
\end{equation}
For the second term in \eqref{eq-Lt-ineq-tri-full-sch}, we prove by induction that for all $n\in\mathbb{N}$, all $h \in (0,h^*)$ and all $\Delta t\in(0,\Delta t^*)$ (for some $\Delta t^*>0$)
\begin{equation}\label{eq-Lt-second-term-full-sch}
\| \pi_h\Lb^n_{t_k}q_0 - \Lb^n_{h,\Delta t, k} q_0 \| \leq M n \tau \left(h^\theta + \Delta t \right) \| q_0 \|_2.
\end{equation}
By definition, we have 
$$
\begin{array}{lll}
\left\| \pi_h \Lb_{t_k}q_0 - \Lb_{h,\Delta t, k} q_0 \right\| &= \big\| \pi_h \T^-_{t_k} \T^+_{t_k}q_0 - \T_{h,\Delta t, k}^- \T_{h,\Delta t, k}^+ q_0\big\|, \\
&\leq \left\| \left(\pi_h \T^-_{t_k} - \T_{h,\Delta t, k}^-\right) \pi_h \T_{t_k}^+ q_0 \right\|\\
&\hspace{1cm} + \left\| \T_{h,\Delta t, k}^-\left( \pi_h \T_{t_k}^+ - \T_{h,\Delta t, k}^+ \right) q_0 \right\|.
\end{array}
$$
Using \eqref{eq-cor-pihq-qhk-backward-sch} and Lemma \ref{lem-contraction}, we get 
$$
\left\| \left( \pi_h \T^-_{t_k} - \T_{h,\Delta t, k}^- \right) \pi_h \T_{t_k}^+ q_0 \right\| \leq M(\tau- t_k) \left( h^\theta + \Delta t \right) \| q_0 \|_2.
$$
Obviously $\| \T^-_{h,\Delta t,k} \|_{\mathcal{L}(X)}$ is uniformly bounded (with respect to $h$ and $\Delta t$), and thus again by \eqref{eq-cor-pihq-qhk-forward-sch} we have
$$
\left\| \T_{h,\Delta t,k}^- \left( \pi_h \T_{t_k}^+ - \T_{h,\Delta t,k}^+ \right) q_0 \right\| \leq M t_k\left( h^\theta + \Delta t \right) \|q_0\|_2.
$$
So, by adding the two last inequalities, we obtain that
\begin{equation}\label{eq-pih-Lt-full-sch}
\left\| \pi_h\Lb_{t_k}q_0 - \Lb_{h,\Delta t,k} q_0 \right\| \leq M \tau\left( h^\theta + \Delta t\right) \|q_0\|_2,
\end{equation}
showing that \eqref{eq-Lt-second-term-full-sch} holds for $n=1$. Suppose now that for some $n\geq 2$ 
\begin{equation}\label{eq-pih-Lt-rec-full-sch}
\| \pi_h\Lb^{n-1}_{t_k} q_0 - \Lb^{n-1}_{h,\Delta t,k} q_0 \| \leq M (n-1)
 \tau \left( h^\theta + \Delta t\right) \| q_0 \|_2.
\end{equation}
Writing 
\begin{multline*}
\| \pi_h\Lb^n_{t_k} q_0 - \Lb^n_{h,\Delta t,k} q_0 \| \leq \| \pi_h\Lb_{t_k}\Lb^{n-1}_{t_k} q_0 - \Lb_{h,\Delta t,k}\pi_h\Lb^{n-1}_{t_k} q_0 \|\\
+ \| \Lb_{h,\Delta t,k}(\pi_h\Lb^{n-1}_{t_k} q_0 - \Lb^{n-1}_{h,\Delta t,k} q_0) \|,
\end{multline*}
we get by using Lemma \ref{lem-contraction}, the uniform boundedness of $\|\Lb_{h,\Delta t,k}\|_{\mathcal{L}(X)}$ with respect to $h$ and $\Delta t$, \eqref{eq-pih-Lt-full-sch} and \eqref{eq-pih-Lt-rec-full-sch} that
$$
\| \pi_h\Lb^n_{t_k} q_0 - \Lb^n_{h,\Delta t,k} q_0 \| \leq M \left[(1+(n-1)) \tau \left( h^\theta + \Delta t\right) \right]\| q_0 \|_2,
$$
which is exactly \eqref{eq-Lt-second-term-full-sch}. Substituting \eqref{eq-Lt-first-term-full-sch} and \eqref{eq-Lt-second-term-full-sch} in \eqref{eq-Lt-ineq-tri-full-sch}, we obtain the result.
\end{proof}
We are now able to prove Theorem \ref{th-main-full-sch}.
\begin{proof}[of Theorem \ref{th-main-full-sch}]
We first introduce the term $\dsp \sum_{n=0}^{N_{h,\Delta t}} \Lb_{h,\Delta t,K}^n z^-(0)$ to rewrite the approximation error $z_0 - z_{0,h,\Delta t}$ in the following form:
$$
\begin{array}{lll}
z_0 - z_{0,h,\Delta t} &= \dsp \sum_{n=0}^\infty \Lb_\tau^n z^-(0) -\sum_{n=0}^{N_{h,\Delta t}} \Lb_{h,\Delta t,K}^n \left(z_{h}^-\right)^0\\
&= \dsp \sum_{n>N_{h,\Delta t}} \Lb_\tau^n z^-(0) + \sum_{n=0}^{N_{h,\Delta t}}\left(\Lb_\tau^n - \Lb_{h,\Delta t,K}^n \right) z^-(0)\\
& \dsp \hspace{5cm} + \sum_{n=0}^{N_{h,\Delta t}} \Lb_{h,\Delta t,K}^n \left( z^-(0) - \left(z_{h}^-\right)^0\right).
\end{array} 
$$
Therefore, we have 
\begin{equation}\label{eq-diff-z0-z0hk-sch}
\| z_0 - z_{0,h,\Delta t} \| \leq S_1+S_2+S_3,
\end{equation}
where we have set 
$$
\left\{
\begin{array}{lll}
\dsp S_1= \sum_{n>N_{h,\Delta t}} \left\| \Lb_\tau^n z^-(0) \right\|,\\
\dsp S_2= \sum_{n=0}^{N_{h,\Delta t}} \left\| \left(\Lb_\tau^n - \Lb_{h,\Delta t,K}^n \right) z^-(0) \right\|,\\
\dsp S_3= \left(\sum_{n=0}^{N_{h,\Delta t}} \left\| \Lb_{h,\Delta t,K}^n \right\|_{\mathcal{L}(X)} \right) \Big\| z^-(0) - \left(z_{h}^-\right)^0 \Big\|.
\end{array} 
\right.
$$
Since $\eta =\|\Lb_\tau\|_{\mathcal{L}(X)} < 1$, the first term can be estimated very easily 
\begin{equation}\label{eq-first-term-th-full-sch}
S_1\leq M \frac{\eta^{N_{h,\Delta t}+1}}{1-\eta} \| z_0 \|_2.
\end{equation}
The second term $S_2$ can be estimated using the estimate \eqref{eq-cor-pihq-qhk-sch2} from Proposition \ref{prop-L-tau-hk-sch}
$$
S_2 \leq M \Bigg\{\sum_{n=0}^{N_{h,\Delta t}} \left(h^\theta + n \tau ( h^\theta + \Delta t)\right)\Bigg\} \|z^-(0)\|_2, \quad \forall h \in (0,h^*), \Delta t \in (0,\Delta t^*).
$$
Therefore, using \eqref{eq-z0-Ltau}, the fact that $\|\Lb_\tau\|_{\Ds}<1$ (see Lemma \ref{lem-contraction}) in the above relation, we get that for all $h \in (0,h^*)$ and $\Delta t\in(0,\Delta t^*)$ 
\begin{equation}\label{eq-second-term-th-full-sch}
S_2 \leq M\Big[1+(1+\tau)N_{h,\Delta t}+(1+\tau)N_{h,\Delta t}^2 \Big] \left(h^\theta + \Delta t\right) \|z_0\|_2.
\end{equation}
It remains to estimate the term $S_3$. As for the semi-discrete case, on can easily show that $\|\Lb_{h,\Delta t,K}\|_{\mathcal{L}(X)}$ is uniformly bounded by 1 (with respect to $h$ and $\Delta t$), and thus we have 
\begin{equation}\label{eq-S3-full-num1}
\begin{array}{rcl}
S_3 &\leq & \dsp M N_{h,\Delta t} \left\| z^-(0) - (z_h^-)^0 \right\|\\
 & \leq & M N_{h,\Delta t} \dsp\left(\left\| z^-(0) - \pi_h z^-(0)\right\|+ \left\|\pi_h z^-(0)- (z_{h}^-)^0\right\|\right).
\end{array}
\end{equation}
By using \eqref{eq-est-X} and \eqref{eq-z0-Ltau}, we immediately obtain that
\begin{equation}\label{eq-S3-full-num2}
\left\| z^-(0) - \pi_h z^-(0)\right\| \leq Mh^\theta \|z_0\|_2.
\end{equation}
To estimate the second term $\pi_h z^-(0)- (z_{h}^-)^0$, we apply twice Proposition \ref{prop-pihq-qhk-sch} first for the time reversed backward observer $z^-(\tau-\cdot)$ and then for the forward observer $z^+$ (the time reversal step is introduced simply because Proposition \ref{prop-pihq-qhk-sch} is written for initial (and not final) value Cauchy problems). After straightforward calculation we obtain that for all $h\in(0,h^*)$ and all $\Delta t\in(0,\Delta t^*)$
\begin{multline}\label{eq-intermediate-main-1-full-sch}
\left\| \pi_h z^-(0) - (z_{h}^-)^0 \right\| \leq M (h^\theta + \Delta t) \Big[ \tau(\|z^+(\tau)\|_2+ \|C^*y\|_{1, \infty} + \|C^*\dot{y}\|_\infty) \\
 + \tau^2\|C^*y\|_{2,\infty} \Big] + \Delta t \sum_{\ell=1}^K \|C^*\left(y(\tau-t_\ell)-y_{h}^{K-\ell}\right)\| + \Delta t \sum_{\ell=1}^K \|C^*\left(y(t_\ell)-y_{h}^\ell\right)\|.
\end{multline}
Applying \eqref{eq-q-norm-1-2} of Lemma \ref{lem-annexe2} of the Appendix with zero initial data, we obtain that 
$$
\|z^+(\tau)\|_2 \leq \tau \|C^*y\|_{2,\infty}.
$$
As $C^*C \in \mathcal{L}\left(\Dzs\right) \cap \mathcal{L}\left(\Dz\right)$ and $\|z\|_{2,\infty}=\|z_0\|_2$ (since $iA_0$ is skew-adjoint), \eqref{eq-intermediate-main-1-full-sch} also reads
$$
\left\| \pi_h z^-(0) - (z_{h}^-)^0 \right\| \leq M (h^\theta + \Delta t) (\tau + \tau^2)\|z_0\|_2 + 2\Delta t \sum_{\ell=0}^K \|C^*\left(y(t_\ell)-y_{h}^\ell\right)\|.
$$
Substituting the above relation and \eqref{eq-S3-full-num2} in \eqref{eq-S3-full-num1}, we get
\begin{equation}\label{eq-third-term-th-full-sch}
S_3 \leq M N_{h,\Delta t} \left\{(h^\theta + \Delta t) (1+\tau + \tau^2)\|z_0\|_2 + \Delta t \sum_{\ell=0}^K \|C^*\left(y(t_\ell)-y_{h}^\ell\right)\| \right\}.
\end{equation}
Substituting \eqref{eq-first-term-th-full-sch}, \eqref{eq-second-term-th-full-sch} and \eqref{eq-third-term-th-full-sch} in \eqref{eq-diff-z0-z0hk-sch}, we get for all $h\in(0,h^*)$ and all $\Delta t\in(0,\Delta t^*)$
\begin{multline*}
\| z_0 - z_{0,h,\Delta t} \| \leq M 
\left\{ N_{h,\Delta t} \Delta t \sum_{\ell=0}^K \left\|C^*\left(y(t_\ell)-y_{h}^\ell\right) \right\|
+ \frac{\eta^{N_{h,\Delta t}+1}}{1-\eta} \| z_0 \|_2 \right.\\
\left.+   (h^\theta + \Delta t) \Big[1+(1+\tau+\tau^2)N_{h,\Delta t} +(1+\tau) N_{h,\Delta t}^2 \Big] \| z_0 \|_2 \right\},
\end{multline*}
which leads to the result (with possibly reducing the value of $h^*$ and $\Delta t^*$).
\end{proof}

\section{The wave equation}
\setcounter{equation}{0}
\label{Sect_Wave}
Let $H$ be a Hilbert space endowed with the inner product $\left\langle\cdot,\cdot\right\rangle$. The corresponding norm of $H$ is denoted by $\|\cdot\|$. Let $A_0 : \Dz \rightarrow H$ be a strictly positive self-adjoint operator and $C_0 \in \mathcal{L}(H,Y)$ a bounded observation operator, where $Y$ is an other Hilbert space. The norm in ${\cal D}(A_0^\alpha)$ will be denoted by $\|\cdot\|_\alpha$. Given $\tau>0$, we deal with the general wave type system 
\begin{equation}\label{eq-sys-initial-wave}
\left\lbrace
\begin{array}{ll}
\ddot{w}(t) + A_0 w(t) = 0, \quad \forall t \geqslant 0,\\
y(t) = C_0 \dot w(t), \quad \forall t \in [0,\tau],
\end{array}\right.
\end{equation}
and we want to reconstruct the initial value $(w_0,w_1)=(w(0),\dot w(0))$ of \eqref{eq-sys-initial-wave} knowing $y(t)$ for $t\in [0,\tau]$.
In order to use the general iterative algorithm described in the introduction, we first rewrite \eqref{eq-sys-initial-wave} as a first order system of the form \eqref{eq-sys-initial}. To achieve this, it suffices to introduce the following notation:
\begin{eqnarray}
z(t) = \left[ \begin{matrix} w(t) \\ \dot{w}(t) \end{matrix} \right],& \quad X = \Dzdemi \times H, \nonumber \\
A = \left( \begin{matrix} 0 & I \\ -A_0 & 0 \end{matrix} \right),& \quad \D = \Dz \times \Dzdemi, \label{eq-def-A} \\
C \in \mathcal{L}(X,Y) ,& \quad C = \left[ \begin{matrix} 0 & C_0 \end{matrix} \right]. \label{eq-def-C}
\end{eqnarray}
The space $X$ is endowed with the norm
$$
\left\| z\right\| =\sqrt{\| z_1\|_{\frac{1}{2}}^2+\|z_2\|^2}, \FORALL z=\left[ \begin{matrix} z_1 \\ z_2 \end{matrix} \right] \in X.
$$
Note that the operator $iA$ is selfadjoint but has no sign so that the problem studied here does not fit into the framework of Section \ref{Sect_Schr}. We  assume that the pair $(A,C)$ is exactly observable in time $\tau>0$. Thus, according to Liu \cite[Theorem 2.3.]{Liu97}, $A^+=A-C^*C$ (resp. $A^-=-A-C^*C$) is the generator of an exponentially stable $C_0$-semigroup $\T^+$ (resp. $\T^-$). We set as usually 
$$
\Lb_\tau = \T_\tau^-\T_\tau^+.
$$
Throughout this section we always assume that $(w_0,w_1) \in \Ds = \Dztdemi \times \Dz$. Thus by applying Theorem 4.1.6 of Tucsnak and Weiss \cite{TucWei09}, we have $$w \in C\left([0,\tau],\Dztdemi\right) \cap C^1\left([0,\tau],\Dz\right) \cap C^2\left([0,\tau],\Dzdemi\right).$$
The forward and backward observers \eqref{eq-sys-obs-forward} and \eqref{eq-sys-obs-backward} read then as follows (as second-order systems) 
\begin{equation}\label{eq-sys-obs-forward-wave}
\left\lbrace \begin{array}{ll}
\ddot{w}^+(t) + A_0w^+(t) + C_0^*C_0\dot{w}^+(t) = C_0^*y(t), \quad \forall t \in [0,\tau], \\
w^+(0) = 0, \quad \dot{w}^+(0) = 0,
\end{array} \right.
\end{equation}
\begin{equation}\label{eq-sys-obs-backward-wave}
\left\lbrace \begin{array}{ll}
\ddot{w}^-(t) + A_0w^-(t) - C_0^*C_0\dot{w}^-(t) = -C_0^*y(t), \quad \forall t \in [0,\tau], \\
w^-(\tau) = w^+(\tau), \quad \dot{w}^-(\tau) = \dot{w}^+(\tau).
\end{array} \right.
\end{equation}
Clearly, the above two systems can be written as a general initial value Cauchy problem of the same form (simply by using a time reversal for the second system)
\begin{equation}\label{eq-general-q-wave}
\left\{\begin{array}{ll}
\ddot{p}(t) + A_0p(t) + C_0^*C_0 \dot p(t) = f(t), \qquad \forall t \in [0,\tau],\\
p(0) = p_0, \quad \dot p(0) = p_1
\end{array}\right.
\end{equation}
where we have set 
\begin{itemize}
\item for the forward observer \eqref{eq-sys-obs-forward-wave} : $f(t) = C_0^*y (t)= C_0^*C_0 \dot w(t)$ and $(p_0,p_1)=(0,0)$,
\item for the backward observer \eqref{eq-sys-obs-backward-wave} : $f(t) = -C_0^*y(\tau-t) = -C_0^*C_0 \dot w(\tau-t)$ and $(p_0,p_1)=(w^+(\tau),-\dot w^+(\tau)) \in \Ds = \Dztdemi \times \Dz$.
\end{itemize}
Let us emphasize that with these notation, the semigroups $\T^\pm$ are given by the relations
\begin{equation}\label{eq-Tpm-wave}
\T^+_t \left[\begin{matrix} p_0 \\ p_1 \end{matrix}\right] = \left[\begin{matrix} p(t) \\ \dot p(t) \end{matrix}\right]
\qquad\qquad
\T^-_t \left[\begin{matrix} p_0 \\ p_1 \end{matrix}\right] = \left[\begin{matrix} p(\tau-t) \\ -\dot p(\tau-t) \end{matrix}\right] 
\end{equation}
where $p$ solves \eqref{eq-general-q-wave} with $f=0$.

In the next two subsections, we propose a convergence analysis of semi-discretized and fully discretized approximation schemes for the forward and backward observers \eqref{eq-sys-obs-forward-wave} and \eqref{eq-sys-obs-backward-wave}. Our proof is based on the convergence analysis of the semi and fully discretizations of \eqref{eq-general-q-wave}.  As far as we know, the existing literature on the convergence analysis of full discretizations of wave-type systems concern only the particular cases of conservative systems (i.e. without damping), see e.g. Raviart and Thomas \cite[p.\, 197]{RavTho98} or Dautray and Lions \cite[p.\, 921]{DauLio85} and systems with constant damping coefficients Geveci and Kok \cite{GevKok85}. For a recent review of numerical approximation issues related to  the control and the observation of waves, we refer the reader to the review paper of Zuazua \cite{Zua05}.

\subsection{Space Semi-Discretization}
\label{Subsct_Wave1}
\subsubsection{Statement of the main result}
We use a Galerkin method to approximate system \eqref{eq-general-q-wave}. More precisely, consider a family $(H_h)_{h>0}$ of finite-dimensional subspaces of $\Dzdemi$ endowed with the norm in $H$. We denote $\pi_h$ the orthogonal projection from $\Dzdemi$ onto $H_h$. We assume that there exist $M>0$, $\theta>0$ and $h^*>0$ such that we have for all $h\in(0,h^*)$ 
\begin{equation}\label{eq-est-H}
\left\| \pi_h\varphi-\varphi \right\| \leq Mh^\theta \left\| \varphi \right\|_\frac{1}{2}, \qquad \forall \varphi \in \Dzdemi.
\end{equation}
Given $(p_0,p_1) \in \Ds$, the variational formulation of \eqref{eq-general-q-wave} reads for all $t\in[0,\tau]$ and all $\varphi \in \Dzdemi$ as follows
\begin{equation}\label{eq-general-q-FV-wave}
\left\{\begin{array}{ll}
\left\langle\ddot{p}(t),\varphi \right\rangle + \left\langle p(t),\varphi \right\rangle_\frac{1}{2} + \left\langle C_0^* C_0 \dot p(t), \varphi \right\rangle = \left\langle f(t),\varphi \right\rangle, \qquad \forall t \in [0,\tau],\\
p(0) = p_0, \quad \dot p(0) = p_1.
\end{array}\right.
\end{equation}
Suppose that $(p_{0,h},p_{1,h}) \in H_h \times H_h$ and $f_h$ are given approximations of $(p_0,p_1)$ and $f$ respectively in the spaces $X$ and $L^1\left([0,\tau],H\right)$. We define $p_h(t)$ as the solution of the variational problem 
\begin{equation}\label{eq-general-qh-FV-wave}
\left\{\begin{array}{ll}
\left\langle\ddot{p}_h(t),\varphi_h \right\rangle + \left\langle p_h(t),\varphi_h \right\rangle_\frac{1}{2} + \left\langle C_0^* C_0 \dot p_h(t), \varphi_h \right\rangle = \left\langle f_h(t),\varphi_h \right\rangle, \qquad \forall t \in [0,\tau],\\
p_h(0) = p_{0,h}, \quad \dot p_h(0) = p_{1,h}.
\end{array}\right.
\end{equation}
for all $t \in [0,\tau]$ and all $\varphi_h \in H_h$.

The above approximation procedure leads in particular to the definition of the semi-discretized versions $\T^\pm_h$ of the semigroups $\T^\pm$ that we will use. Indeed, we simply set 
\begin{equation}\label{eq-Tpm-h-wave}
\T^+_{h,t} \left[\begin{matrix} p_0 \\ p_1 \end{matrix}\right] = \left[\begin{matrix} p_h(t) \\ \dot p_h(t) \end{matrix}\right]
\qquad\qquad
\T^-_{h,t} \left[\begin{matrix} p_0 \\ p_1 \end{matrix}\right] = \left[\begin{matrix} p_h(\tau-t) \\ -\dot p_h(\tau-t) \end{matrix}\right] 
\end{equation}
where $p_h$ solves \eqref{eq-general-qh-FV-wave} for $f_h=0$ and $(p_{0,h},p_{1,h})=(\pi_hp_0, \pi_hp_1)$.
The semi-discretized counterpart of $\Lb_\tau = \T_\tau^-\T_\tau^+$ is then given by 
$$
\Lb_{h,\tau} = \T^-_{h,\tau}\T^+_{h,\tau}.
$$ 

Assume that $y_h$ is an approximation of the output $y$ in $L^1([0,\tau],Y)$ and let $w_h^+$ and $w_h^-$ denote the Galerkin approximations of the solutions of systems \eqref{eq-sys-obs-forward-wave} and \eqref{eq-sys-obs-backward-wave}, satisfying for all $t \in [0,\tau]$ and all $\varphi_h \in H_h$
\begin{equation}\label{eq-sys-obs-forward-h-wave}
\left\lbrace \begin{array}{ll}
\left\langle\ddot{w}_h^+(t),\varphi_h\right\rangle + \left\langle w_h^+(t),\varphi_h\right\rangle_\frac{1}{2} + \left\langle C_0^*C_0\dot{w}_h^+(t),\varphi_h\right\rangle = \left\langle C_0^*y_h(t),\varphi_h\right\rangle, \\
w_h^+(0) = 0, \quad \dot{w}_h^+(0) = 0,
\end{array} \right.
\end{equation}
\begin{equation}\label{eq-sys-obs-backward-h-wave}
\left\lbrace \begin{array}{ll}
\left\langle\ddot{w}_h^-(t),\varphi_h\right\rangle + \left\langle w_h^-(t),\varphi_h\right\rangle_\frac{1}{2} - \left\langle C_0^*C_0\dot{w}_h^-(t),\varphi_h\right\rangle = -\left\langle C_0^*y_h(t),\varphi_h\right\rangle, \\
w_h^-(\tau) = w_h^+(\tau), \quad \dot{w}_h^-(\tau) = \dot{w}_h^+(\tau).
\end{array} \right.
\end{equation}

With the above notation, the main result of this section reads as follows.
\begin{theorem}\label{th-main-wave}
Let $A_0 : \Dz \rightarrow H$ be a strictly positive self-adjoint operator and $C_0 \in \mathcal{L}(H,Y)$ such that $C_0^*C_0 \in \mathcal{L}\left(\Dztdemi\right)\cap\mathcal{L}\left(\Dzdemi\right)$. Define $(A,C)$ by \eqref{eq-def-A} and \eqref{eq-def-C}. Assume that the pair $(A,C)$ is exactly observable in time $\tau>0$ and set $\eta:=\|\Lb_\tau\|_{\mathcal{L}(X)} < 1$. Let $(w_0,w_1) \in \Dztdemi \times \Dz$ be the initial value of \eqref{eq-sys-initial-wave} and let $(w_{0,h},w_{1,h})$ be defined by 
\begin{equation}\label{eq-w0hw1h-neumann}
\left[\begin{matrix} w_{0,h} \\ w_{1,h} \end{matrix}\right] = \sum_{n=0}^{N_h} \Lb_{h,\tau}^{n} \left[\begin{matrix} w_h^-(0) \\ \dot w_h^-(0) \end{matrix}\right].
\end{equation}
Then there exist $M > 0$ and $h^* > 0$ such that for all $h \in (0, h^*)$ 
\begin{multline*}
\| w_0 - w_{0,h} \|_\frac{1}{2} + \| w_1 - w_{1,h} \| \leq M \Bigg[ \left(\frac{\eta^{N_h+1}}{1-\eta} + h^\theta \tau N_h^2 \right)\left(\| w_0 \|_\frac{3}{2} + \| w_1 \|_1\right) \\
 + N_h \int_0^\tau \| C_0^*\left(y(s)-y_h(s)\right) \|ds \Bigg].
\end{multline*}
\end{theorem}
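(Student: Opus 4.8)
The plan is to follow \emph{verbatim} the three-step strategy used for the Schr\"odinger equation in Theorem \ref{th-main-sch}, the only structural change being that the reconstruction error is now measured in the energy norm of $X=\Dzdemi\times H$: the quantity $\|w_0-w_{0,h}\|_\frac{1}{2}+\|w_1-w_{1,h}\|$ is controlled, up to a universal constant, by $\left\|(w_0,w_1)-(w_{0,h},w_{1,h})\right\|_X$. Inserting the intermediate sum $\sum_{n=0}^{N_h}\Lb_{h,\tau}^n(w^-(0),\dot w^-(0))$ and using the exact identity \eqref{eq-z0-neumann}, I would split the error into the truncation term $S_1=\sum_{n>N_h}\|\Lb_\tau^n(\cdot)\|$, the semigroup-approximation term $S_2=\sum_{n=0}^{N_h}\|(\Lb_\tau^n-\Lb_{h,\tau}^n)(\cdot)\|$ and the first-iterate term $S_3$. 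Granting the wave analogues of Propositions \ref{prop-pihq-qh-sch} and \ref{prop-L-tau-h-sch} described below, the estimates of $S_1,S_2,S_3$ are then obtained exactly as in the proof of Theorem \ref{th-main-sch}, with $\|z_0\|_2$ replaced by $\|(w_0,w_1)\|_{\Ds}\simeq\|w_0\|_\frac{3}{2}+\|w_1\|_1$.

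The genuinely new ingredient is the wave counterpart of Proposition \ref{prop-pihq-qh-sch}, i.e. an energy estimate for the Galerkin scheme \eqref{eq-general-qh-FV-wave} approximating \eqref{eq-general-q-FV-wave}. Here I would set
$$
\mathcal{E}_h(t)=\tfrac12\left(\|\pi_hp(t)-p_h(t)\|_\frac{1}{2}^2+\|\pi_h\dot p(t)-\dot p_h(t)\|^2\right),
$$
subtract \eqref{eq-general-qh-FV-wave} from \eqref{eq-general-q-FV-wave}, and test the resulting error equation with the \emph{velocity} error $\varphi_h=\pi_h\dot p-\dot p_h=\frac{d}{dt}(\pi_hp-p_h)$. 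Two facts keep the computation close to the first-order case. First, Galerkin orthogonality $\left\langle\pi_hp-p,\varphi_h\right\rangle_\frac{1}{2}=0$ replaces $p-p_h$ by $\pi_hp-p_h$ in the stiffness term, and the latter then exactly cancels the term $\mathrm{Re}\,\left\langle\pi_h\dot p-\dot p_h,\pi_hp-p_h\right\rangle_\frac{1}{2}$ produced by differentiating $\|\pi_hp-p_h\|_\frac{1}{2}^2$. Second, writing $\dot p-\dot p_h=(\dot p-\pi_h\dot p)+(\pi_h\dot p-\dot p_h)$ in the damping term, the diagonal part $-\left\langle C_0^*C_0(\pi_h\dot p-\dot p_h),\pi_h\dot p-\dot p_h\right\rangle=-\|C_0(\pi_h\dot p-\dot p_h)\|^2\le0$ \emph{has the right sign and is simply dropped}. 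This dissipativity, which holds because the reduced system \eqref{eq-general-q-wave} always carries $+C_0^*C_0\dot p$ (even for the backward observer, after the time reversal), is precisely what lets me integrate $\frac{d}{dt}\sqrt{2\mathcal{E}_h}\le \|\pi_h\ddot p-\ddot p\|+M\|\pi_h\dot p-\dot p\|+\|f-f_h\|$ directly, \emph{without invoking Gr\"onwall}; this is essential, since a Gr\"onwall factor $e^{M\tau}$ would destroy the required $\tau$-independence of $M$.

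The remaining consistency terms $\|\pi_h\ddot p-\ddot p\|\le Mh^\theta\|\ddot p\|_\frac{1}{2}$ and $\|\pi_h\dot p-\dot p\|\le Mh^\theta\|\dot p\|_\frac{1}{2}\le Mh^\theta\|\dot p\|_1$ are controlled by \eqref{eq-est-H}, while the norms $\|\dot p(t)\|_1$ and $\|\ddot p(t)\|_\frac{1}{2}$ are bounded in terms of $\|(p_0,p_1)\|_{\Ds}$ and $\|f\|_{1,\infty},\|f\|_{2,\infty}$ using the regularity of $p$ (the wave version of Lemma \ref{lem-annexe2}); note that $(p_0,p_1)\in\Ds=\Dztdemi\times\Dz$ is exactly what guarantees $p\in C^2([0,\tau],\Dzdemi)$, so that $\|\ddot p\|_\frac{1}{2}$ makes sense. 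With this wave analogue of Proposition \ref{prop-pihq-qh-sch} in hand, the semigroup error bounds $\|\pi_h\T^\pm_tq_0-\T^\pm_{h,t}q_0\|\le M t\, h^\theta\|q_0\|_{\Ds}$ (read off through \eqref{eq-Tpm-h-wave}) and the power estimate $\|\Lb_t^nq_0-\Lb_{h,t}^nq_0\|\le M(1+n\tau)h^\theta\|q_0\|_{\Ds}$ follow by the \emph{same induction} as in Proposition \ref{prop-L-tau-h-sch}, the only external inputs being \eqref{eq-est-H}, the uniform boundedness of $\|\T_{h,t}^\pm\|$ and the contraction property $\|\Lb_t\|_{\mathcal{L}(\D)}\le1$ from Lemma \ref{lem-contraction}.

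Finally I would assemble the pieces: $S_1\le M\frac{\eta^{N_h+1}}{1-\eta}\|(w_0,w_1)\|_{\Ds}$ from \eqref{eq-z0-Ltau}; $S_2\le M\left(1+(1+\tau)N_h+\tau N_h^2\right)h^\theta\|(w_0,w_1)\|_{\Ds}$ by summing the power estimate; and $S_3\le MN_h\left(h^\theta\|(w_0,w_1)\|_{\Ds}+\int_0^\tau\|C_0^*(y-y_h)\|\,ds\right)$, where the last bound is obtained by applying the wave Proposition twice — first to the time-reversed backward observer, then to the forward observer — exactly as for $S_3$ in the Schr\"odinger case. I expect the main obstacle to be the energy estimate of the second step: arranging the cancellation of the $\left\langle\cdot,\cdot\right\rangle_\frac{1}{2}$ cross-terms and, above all, exploiting the sign of the damping term so as to bypass Gr\"onwall and preserve the $\tau$-independence of the constants. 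The regularity bootstrapping needed to bound $\|\ddot p\|_\frac{1}{2}$ is the secondary difficulty; everything else transfers mechanically from the Schr\"odinger analysis.
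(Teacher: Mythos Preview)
Your proposal is correct and matches the paper's proof essentially line by line: the same $S_1+S_2+S_3$ splitting, the same energy $\mathcal{E}_h=\tfrac12\|\pi_h\dot p-\dot p_h\|^2+\tfrac12\|\pi_hp-p_h\|_\frac{1}{2}^2$ tested with the velocity error $\varphi_h=\pi_h\dot p-\dot p_h$, the same Galerkin-orthogonality cancellation in the stiffness term, the same use of the sign of the damping contribution to obtain $\frac{d}{dt}\sqrt{2\mathcal{E}_h}\le\|\pi_h\ddot p-\ddot p\|+M\|\pi_h\dot p-\dot p\|+\|f-f_h\|$ without Gr\"onwall, the same induction on powers of $\Lb_{h,\tau}$ (Proposition \ref{prop-L-tau-h-wave}), and the same double application of the wave error proposition to bound $S_3$. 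One small slip: in the regularity step the relevant norms are $\|f\|_{\frac{1}{2},\infty}$ and $\|f\|_{1,\infty}$, not $\|f\|_{2,\infty}$, since for the first-order unknown $q=(p,\dot p)$ with right-hand side $F=(0,f)$ one has $\|F\|_{\D}=\|f\|_\frac{1}{2}$ and $\|F\|_{\Ds}=\|f\|_1$; this is exactly what Lemma \ref{lem-annexe2} (applied to the first-order formulation) delivers, and it is all that is needed.
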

\begin{corollary}
Under the assumptions of Theorem \ref{th-main-wave}, we set 
$$
N_h = \theta \frac{\ln h}{\ln \eta}.
$$
Then, there exist $ M_\tau > 0 $ and $ h^* > 0 $ such that for all $h \in (0, h^*)$
\begin{multline}\label{eq-w0w0h-w1w1h}
\| w_0 - w_{0,h} \|_\frac{1}{2} + \| w_1 - w_{1,h} \| \leq M_\tau \Bigg[h^\theta \ln^2 h\, \left(\| w_0 \|_\frac{3}{2} + \| w_1 \|_1\right) \\
+ |\ln h| \int_0^\tau \| C_0^*\left(y(s)-y_h(s)\right) \|ds \Bigg].
\end{multline}
\end{corollary}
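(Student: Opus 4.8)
The proof runs in complete parallel with that of Theorem \ref{th-main-sch}; the only genuinely new input is a space-discretization error estimate for the damped second-order system \eqref{eq-general-q-FV-wave}, playing the role of Proposition \ref{prop-pihq-qh-sch}. The plan is therefore to first establish this estimate, then deduce from it the approximation estimates for the semigroups $\T^\pm$ and for $\Lb_\tau$ (the wave analog of Proposition \ref{prop-L-tau-h-sch}), and finally assemble the conclusion through the same three-term splitting $S_1+S_2+S_3$ used in the Schr\"odinger case, after observing that the left-hand side of the statement is, by the definition of the $X$-norm, equivalent up to a constant to $\|z_0-z_{0,h}\|_X$ with $z_0=\left[\begin{matrix}w_0\\ w_1\end{matrix}\right]$.

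The key and most delicate step is the energy estimate, which I expect to be the main obstacle precisely because the generator $A$ defined in \eqref{eq-def-A} is indefinite (only $iA$ is self-adjoint), so no scalar energy of the form used in Proposition \ref{prop-pihq-qh-sch} is available. I would instead work in the natural $X$-energy of the first-order formulation. Setting $e_h=\pi_h p-p_h$ and
\[
\mathcal{E}_h = \frac{1}{2}\left(\|\dot e_h\|^2 + \|e_h\|_\frac{1}{2}^2\right),
\]
I subtract \eqref{eq-general-qh-FV-wave} from \eqref{eq-general-q-FV-wave}, insert $\pi_h p$ using the Galerkin orthogonality $\left\langle p-\pi_h p,\varphi_h\right\rangle_\frac{1}{2}=0$, and test with $\varphi_h=\dot e_h\in H_h$. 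Taking real parts, the damping term contributes $-\|C_0\dot e_h\|^2\leq 0$ and can be dropped, so that the indefinite principal part disappears from the identity and one is left with
\[
\dot{\mathcal{E}}_h \leq \left(\|\pi_h\ddot p - \ddot p\| + M\|\pi_h \dot p - \dot p\| + \|f - f_h\|\right)\sqrt{2\mathcal{E}_h}.
\]
Dividing by $\sqrt{2\mathcal{E}_h}$, integrating in time, and then bounding $\|\pi_h\ddot p-\ddot p\|\leq Mh^\theta\|\ddot p\|_\frac{1}{2}$ and $\|\pi_h\dot p-\dot p\|\leq Mh^\theta\|\dot p\|_\frac{1}{2}$ via \eqref{eq-est-H} together with the wave version of the regularity bounds of Lemma \ref{lem-annexe2} yields the desired estimate. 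The two facts that make this go through are the favorable sign of the damping and the regularity $w\in C^2\left([0,\tau],\Dzdemi\right)$, which is exactly what supplies the control of $\|\ddot p\|_\frac{1}{2}$ in terms of the data $\|p_0\|_\frac{3}{2}+\|p_1\|_1$ and the relevant norms of $f$.

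From this estimate, the wave analogs of \eqref{eq-cor-pihq-qh-forward-sch}--\eqref{eq-cor-pihq-qh-backward-sch} follow by taking $f=f_h=0$ and $(p_{0,h},p_{1,h})=(\pi_hp_0,\pi_hp_1)$ in \eqref{eq-Tpm-wave}--\eqref{eq-Tpm-h-wave}, and the iterate estimate (the analog of \eqref{eq-cor-pihq-qh-sch2}) then follows by the same induction on $n$ as in Proposition \ref{prop-L-tau-h-sch}, invoking $\|\Lb_t\|_{\mathcal{L}(\D)}\leq 1$ from Lemma \ref{lem-contraction} and the uniform boundedness of $\|\T^\pm_{h,\tau}\|_{\mathcal{L}(X)}$ and $\|\Lb_{h,\tau}\|_{\mathcal{L}(X)}$.

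Finally I would assemble the error as $\|z_0-z_{0,h}\|_X\leq S_1+S_2+S_3$ exactly as in the proof of Theorem \ref{th-main-sch}: $S_1$ is the Neumann tail, bounded by $M\,\eta^{N_h+1}/(1-\eta)$ times the $\Ds$-norm of $z^-(0)$ via \eqref{eq-z0-Ltau}; $S_2$ is controlled by summing the wave analog of \eqref{eq-cor-pihq-qh-sch2} over $0\leq n\leq N_h$, producing the $h^\theta\tau N_h^2$ contribution; and $S_3\leq MN_h\|z^-(0)-z_h^-(0)\|$, the first-iterate error being estimated by applying the second-order energy estimate twice (first to the time-reversed backward observer, then to the forward observer), which is where the data term $\int_0^\tau\|C_0^*(y(s)-y_h(s))\|ds$ enters. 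Throughout, the role played by $\|z_0\|_2$ in the Schr\"odinger case is here played by $\|w_0\|_\frac{3}{2}+\|w_1\|_1$, since $\Ds=\Dztdemi\times\Dz$. Collecting the three bounds and reducing $h^*$ if necessary gives the stated inequality.
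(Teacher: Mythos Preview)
Your proposal is a correct and complete outline of the proof of \emph{Theorem \ref{th-main-wave}}, and it matches the paper's own argument essentially line by line: the energy $\mathcal{E}_h=\tfrac12(\|\dot e_h\|^2+\|e_h\|_{1/2}^2)$ with test function $\varphi_h=\dot e_h$ is exactly Proposition \ref{prop-pihq-qh-wave}; the semigroup and iterate estimates you describe are Proposition \ref{prop-L-tau-h-wave}; and the $S_1+S_2+S_3$ splitting is carried out verbatim in the proof of Theorem \ref{th-main-wave}.

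However, the statement you were asked to prove is the \emph{Corollary}, not the Theorem. The Corollary assumes Theorem \ref{th-main-wave} and is obtained from it by a one-line substitution that you never perform: with $N_h=\theta\,\ln h/\ln\eta$ one has $\eta^{N_h}=h^\theta$ and $N_h=O(|\ln h|)$, so the three terms $\eta^{N_h+1}/(1-\eta)$, $h^\theta\tau N_h^2$, and $N_h\int_0^\tau\|C_0^*(y-y_h)\|$ in the Theorem's bound become respectively $O(h^\theta)$, $O(h^\theta\ln^2 h)$, and $O(|\ln h|)\int_0^\tau\|C_0^*(y-y_h)\|$, which after absorbing $\tau$-dependent constants into $M_\tau$ is precisely \eqref{eq-w0w0h-w1w1h}. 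The paper gives no proof of this Corollary (its Schr\"odinger analogue is explicitly ``left to the reader because of its simplicity''). So your argument is not wrong, but it reproves the hard part that is already available by hypothesis and omits the only step the Corollary actually requires.
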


\subsubsection{Proof of Theorem \ref{th-main-wave}}
The next Proposition provides the error estimate for the approximation of \eqref{eq-general-q-FV-wave} by using the Galerkin scheme \eqref{eq-general-qh-FV-wave}.
\begin{proposition}\label{prop-pihq-qh-wave}
Given $(p_0,p_1) \in \Dztdemi \times \Dz$ and $(p_{0,h},p_{1,h}) \in H_h \times H_h$, let $p$ and $p_h$ be the solutions of \eqref{eq-general-q-FV-wave} and \eqref{eq-general-qh-FV-wave} respectively. Assume that $C_0^*C_0 \in \mathcal{L}\left(\Dzdemi\right)$. Then, there exist $M>0$ and $h^*>0$ such that for all $t\in[0,\tau]$ and all $h\in(0,h^*)$ 
\begin{multline*}
\| \pi_hp(t)-p_h(t) \|_\frac{1}{2} + \| \pi_h\dot p(t)-\dot p_h(t) \| \leq M\bigg\{\| \pi_hp_0-p_{0,h} \|_\frac{1}{2} + \| \pi_hp_1-p_{1,h} \|\\
 + h^\theta \left[ t\left( \| p_0 \|_\frac{3}{2} + \| p_1 \|_1 + \| f \|_{\frac{1}{2},\infty} \right) + t^2 \| f \|_{1,\infty} \right]\bigg\} + \int_0^t \| f(s)-f_h(s) \| ds.
\end{multline*}
\end{proposition}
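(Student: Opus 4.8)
The plan is to mimic the energy argument used for the Schr\"odinger case in Proposition \ref{prop-pihq-qh-sch}, adapting it to the second-order structure. First I would subtract the discrete weak formulation \eqref{eq-general-qh-FV-wave} from the continuous one \eqref{eq-general-q-FV-wave} tested against an arbitrary $\varphi_h \in H_h$, obtaining for the global error the identity
$$\langle \ddot p - \ddot p_h, \varphi_h\rangle + \langle p - p_h, \varphi_h\rangle_{\frac12} + \langle C_0^*C_0(\dot p - \dot p_h), \varphi_h\rangle = \langle f - f_h, \varphi_h\rangle.$$
Writing $e_h := \pi_h p - p_h \in H_h$ and invoking the Galerkin orthogonality $\langle p - \pi_h p, \varphi_h\rangle_{\frac12} = 0$ (which holds since $\pi_h$ is the orthogonal projection for $\langle\cdot,\cdot\rangle_{\frac12}$) together with $\ddot p - \ddot p_h = \ddot e_h - (\pi_h \ddot p - \ddot p)$, this becomes an equation for $e_h$ whose right-hand side gathers the two projection errors $\pi_h \ddot p - \ddot p$ and, after splitting the damping term, $\pi_h \dot p - \dot p$, plus the data error $f - f_h$. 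The term $\pi_h \ddot p$ makes sense because $\ddot p \in C([0,\tau], \Dzdemi)$ by the regularity of $p$.

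Next I would introduce the discrete error energy
$$\mathcal{E}_h(t) = \frac12\left(\|\pi_h \dot p(t) - \dot p_h(t)\|^2 + \|\pi_h p(t) - p_h(t)\|_{\frac12}^2\right),$$
so that $\dot{\mathcal{E}}_h = \mathrm{Re}\,\langle \ddot e_h, \dot e_h\rangle + \mathrm{Re}\,\langle e_h, \dot e_h\rangle_{\frac12}$, and then test the error equation with the velocity error $\varphi_h = \dot e_h = \pi_h \dot p - \dot p_h$. The crucial algebraic point, exactly as in the Schr\"odinger case, is the favourable sign of the dissipation: splitting $\dot p - \dot p_h = \dot e_h - (\pi_h \dot p - \dot p)$ produces the term $-\mathrm{Re}\,\langle C_0^*C_0 \dot e_h, \dot e_h\rangle = -\|C_0 \dot e_h\|^2 \le 0$, which can simply be discarded, while the remaining contribution $\mathrm{Re}\,\langle C_0^*C_0(\pi_h \dot p - \dot p), \dot e_h\rangle$ is controlled by the boundedness of $C_0$. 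After applying Cauchy--Schwarz this yields the differential inequality
$$\dot{\mathcal{E}}_h \le \left(M\|\pi_h \dot p - \dot p\| + \|\pi_h \ddot p - \ddot p\| + \|f - f_h\|\right)\sqrt{2\mathcal{E}_h},$$
and dividing by $\sqrt{2\mathcal{E}_h}$ (using $\frac{d}{dt}\sqrt{2\mathcal{E}_h} = \dot{\mathcal{E}}_h/\sqrt{2\mathcal{E}_h}$) and integrating from $0$ to $t$ bounds $\sqrt{2\mathcal{E}_h(t)}$, hence both $\|\pi_h p(t)-p_h(t)\|_{\frac12}$ and $\|\pi_h \dot p(t)-\dot p_h(t)\|$, by $\sqrt{2\mathcal{E}_h(0)}$ plus the time integral of the three terms above.

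Finally I would estimate the projection errors in the integrand. Using \eqref{eq-est-H} gives $\|\pi_h \dot p - \dot p\| \le M h^\theta \|\dot p\|_{\frac12}$ and $\|\pi_h \ddot p - \ddot p\| \le M h^\theta \|\ddot p\|_{\frac12}$, while $\sqrt{2\mathcal{E}_h(0)}$ reduces to $\|\pi_h p_0 - p_{0,h}\|_{\frac12} + \|\pi_h p_1 - p_{1,h}\|$. It then remains to bound $\|\dot p(s)\|_{\frac12}$ and $\|\ddot p(s)\|_{\frac12}$ in terms of the data; this is where the continuous regularity estimates for \eqref{eq-general-q-wave} (the wave counterpart of the bounds established in the Appendix) enter, yielding roughly $\|\dot p(s)\|_{\frac12} \le M(\|p_0\|_{\frac32} + \|p_1\|_1 + \|f\|_{\frac12,\infty})$ and an $s$-linear growth for $\|\ddot p(s)\|_{\frac12}$ that, after integration, produces the $t^2\|f\|_{1,\infty}$ term. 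I expect the main obstacle to be precisely these regularity bounds for $\ddot p$ in $\Dzdemi$: via the equation $\ddot p = f - A_0 p - C_0^*C_0 \dot p$ one has $\|\ddot p\|_{\frac12} \le \|f\|_{\frac12} + \|p\|_{\frac32} + M\|\dot p\|_{\frac12}$, so controlling $\|\ddot p\|_{\frac12}$ forces the full regularity $(p_0,p_1)\in\Dztdemi\times\Dz$ and the hypothesis $C_0^*C_0\in\mathcal{L}(\Dzdemi)$, and must be obtained from a higher-order energy estimate for the continuous damped wave system rather than from the energy identity above.
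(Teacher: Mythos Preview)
Your proposal is correct and follows essentially the same route as the paper's proof: subtract the two weak formulations, exploit the Galerkin orthogonality $\langle p-\pi_h p,\varphi_h\rangle_{\frac12}=0$, test the resulting error equation with $\varphi_h=\pi_h\dot p-\dot p_h$, drop the nonpositive damping contribution, integrate the inequality $\dot{\mathcal E}_h\le(\cdots)\sqrt{2\mathcal E_h}$, and then control $\|\pi_h\dot p-\dot p\|$ and $\|\pi_h\ddot p-\ddot p\|$ via \eqref{eq-est-H} and the continuous regularity estimate \eqref{eq-dotq-norm-1} of Lemma~\ref{lem-annexe2} applied to the first-order unknown $q=(p,\dot p)^T$. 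The only cosmetic difference is that the paper bounds $\|\dot p\|_{\frac12}$ crudely by $\|\dot p\|_1$ so that both projection errors are handled by a single application of \eqref{eq-dotq-norm-1} with $\alpha=1$, whereas you sketch separate bounds; either way the final estimate is the same.
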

\begin{proof}
First, we substract \eqref{eq-general-qh-FV-wave} from \eqref{eq-general-q-FV-wave} to obtain (we omit the time dependence for the sake of clarity) for all $\varphi_h\in H_h$ 
$$
\left\langle \ddot p - \ddot{p}_h,\varphi_h \right\rangle + \left\langle p - p_h,\varphi_h \right\rangle_\frac{1}{2} + \left\langle C_0^* C_0\left( \dot p - \dot p_h \right), \varphi_h \right\rangle = \left\langle f-f_h,\varphi_h \right\rangle.
$$
Noting that $\left\langle \pi_hp-p,\varphi_h \right\rangle_\frac{1}{2} = 0$ for all $\varphi_h\in H_h$ and that $\pi_h\ddot{p}$ makes sense by the regularity of $p$ (this is a direct consequence of relation \eqref{eq-reg-q} from Lemma \ref{lem-annexe2} used with $q=\left[\begin{matrix}p\\\dot p\end{matrix}\right]$
), we obtain from the above equality that for all $\varphi_h\in H_h$ 
\begin{multline}\label{eq-tech-pihq-qh-wave}
\left\langle \pi_h\ddot{p}-\ddot{p}_h,\varphi_h \right\rangle + \left\langle \pi_hp-p_h,\varphi_h \right\rangle_\frac{1}{2} = \left\langle \pi_h\ddot{p}-\ddot{p},\varphi_h \right\rangle + \left\langle C_0^*C_0\left(\dot p_h - \dot p\right),\varphi_h \right\rangle + \left\langle f-f_h,\varphi_h \right\rangle.
\end{multline}
On the other hand, setting 
$$
\mathcal{E}_h = \frac{1}{2} \|\pi_h \dot p- \dot p_h\|^2 + \frac{1}{2} \|\pi_hp-p_h\|_\frac{1}{2}^2,
$$
we have 
$$
\dot{\mathcal{E}}_h = \left\langle \pi_h\ddot{p}-\ddot{p}_h,\pi_h\dot{p}-\dot{p}_h \right\rangle + \left\langle \pi_hp-p_h,\pi_h\dot{p}-\dot{p}_h \right\rangle_\frac{1}{2}.
$$
Applying \eqref{eq-tech-pihq-qh-wave} with $\varphi_h=\pi_h \dot p- \dot p_h$ and substituting the result in the above relation, we obtain by using Cauchy-Schwarz inequality and the boundedness of $C_0$ that there exists $M>0$ such that 
$$
\dot{\mathcal{E}}_h \leq \Big( \|\pi_h\ddot{p}-\ddot{p}\| + M \|\pi_h \dot p - \dot p\| + \|f-f_h\| \Big) \underbrace{\|\pi_h \dot p- \dot p_h\|}_{\leq \sqrt{2\mathcal{E}_h}}.
$$
Since $ \dfrac{\dot{\mathcal{E}}_h}{\sqrt{2\mathcal{E}_h}}=\dfrac{d}{dt}\sqrt{2\mathcal{E}_h}$, the integration of the above inequality from $0$ to $t$ yields 
\begin{multline}\label{eq-intermediate-wave}
\|\pi_h p(t)-p_h(t)\|_\frac{1}{2} + \|\pi_h \dot p(t)-\dot p_h(t)\| \leq M\Bigg\{\|\pi_hp_0-p_{0,h}\|_\frac{1}{2} + \|\pi_hp_1-p_{1,h}\| \\
+ \int_0^t \left(\|\pi_h\ddot{p}(s)-\ddot{p}(s)\| + \|\pi_h\dot p(s)- \dot p(s)\|\right)ds +\int_0^t\|f(s)-f_h(s)\|ds\Bigg\}.
\end{multline}
Thus, it remains to bound $\|\pi_h\ddot{p}(t)-\ddot{p}(t)\|$ and $\|\pi_h \dot p(t)- \dot p(t)\|$ for all $t\in [0,\tau]$. Using \eqref{eq-est-H} and the classical continuous embedding from ${\cal D}(A^\alpha)$ to ${\cal D}(A^\beta)$ for $\alpha>\beta$, we get that 
$$
\left\{
\begin{array}{l}
\|\pi_h\ddot{p}(t)-\ddot{p}(t)\| \leq M h^\theta \|\ddot{p}(t)\|_\frac{1}{2},\\
\|\pi_h \dot p(t)- \dot p(t)\| \leq M h^\theta \|\dot p(t)\|_\frac{1}{2} \leq M h^\theta \| \dot p(t)\|_1,
\end{array}
\right.
\qquad \forall t \in [0,\tau], \; h \in (0,h^*).
$$
Using relations \eqref{eq-dotq-norm-1} proved in Lemma \ref{lem-annexe2} of the Appendix for the first order unknown $q=\left[\begin{matrix}p \\ \dot p\end{matrix}\right]$ and the right-hand side $F=\left[\begin{matrix}0 \\ f\end{matrix}\right]$, we get for all $t \in [0,\tau]$ and all $h \in (0,h^*)$ 
$$
\|\pi_h\ddot{p}(t)-\ddot{p}(t)\| + \|\pi_h \dot p(t)- \dot p(t)\| \leq M h^\theta \left( \|p_0\|_\frac{3}{2} + \|p_1\|_1 + t \|f\|_{1,\infty} + \|f\|_{\frac{1}{2},\infty} \right).
$$
Substituting the above inequality in \eqref{eq-intermediate-wave}, we get the result.
\end{proof}
Thanks to the last result, we are now in position to derive an error approximation for the semigroups $\T^\pm$ and for the operator $\Lb_t=\T_t^-\T_t^+$. This result has been recently proved in the preprint \cite{CinMicTuc10} but we prefer to include the proof for the sake of completeness and clarity.
\begin{proposition}\label{prop-L-tau-h-wave}
Let $\Pi_h = \left[\begin{matrix}\pi_h & 0\\0 & \pi_h\end{matrix}\right]$. Under the assumptions of Proposition \ref{prop-pihq-qh-wave}, the following assertions hold true
\begin{enumerate}
\item There exist $M>0$ and $h^*>0$ such that for all $t\in(0,\tau)$ and all $h\in(0,h^*)$ 
\begin{equation}\label{eq-cor-pihq-qh-forward-wave}
\left\|(\Pi_h\T^+_t - \T^+_{h,t}) \Pz  \right\| \leq M t h^\theta \left(\|p_0\|_\frac{3}{2} + \|p_1\|_1\right),
\end{equation}
\begin{equation}\label{eq-cor-pihq-qh-backward-wave}
\left\|(\Pi_h\T^-_t - \T^-_{h,t} ) \Pz \right\| \leq M (\tau-t) h^\theta \left(\|p_0\|_\frac{3}{2} + \|p_1\|_1\right).
\end{equation}

\item There exist $M>0$ and $h^*>0$ such that for all $n\in\mathbb{N}$, all $t\in[0,\tau]$ and all $h\in(0,h^*)$, we have 
\begin{equation}\label{eq-cor-pihq-qh-wave2}
\left\| (\Lb_t^n - \Lb_{h,t}^n) \Pz \right\| \leq M(1 + n \tau ) h^\theta \left(\|p_0\|_\frac{3}{2} + \|p_1\|_1\right).
\end{equation}
\end{enumerate}
\end{proposition}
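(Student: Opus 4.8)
The plan is to transcribe, \emph{mutatis mutandis}, the proof of Proposition \ref{prop-L-tau-h-sch} to the product setting $X=\Dzdemi\times H$, the only genuinely new features being the coupled position/velocity structure, the time reversal encoded in \eqref{eq-Tpm-wave}--\eqref{eq-Tpm-h-wave}, and the fact that the first component of the $X$-norm is measured in the energy norm $\|\cdot\|_\frac12$. Throughout I write $z_0=\Pz$ and use that, up to equivalence of norms, $\|p_0\|_\frac32+\|p_1\|_1$ is the graph norm of $z_0$ in $\Ds$, together with $\sqrt{a^2+b^2}\le a+b$ to pass from the $X$-norm to the sum of the two component norms. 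For Part 1, the forward estimate \eqref{eq-cor-pihq-qh-forward-wave} is just Proposition \ref{prop-pihq-qh-wave} with $f=f_h=0$ and $(p_{0,h},p_{1,h})=(\pi_hp_0,\pi_hp_1)$. For the backward estimate \eqref{eq-cor-pihq-qh-backward-wave} I use \eqref{eq-Tpm-wave}--\eqref{eq-Tpm-h-wave}: the difference $(\Pi_h\T^-_t-\T^-_{h,t})z_0$ has components $\pi_hp(\tau-t)-p_h(\tau-t)$ and $-(\pi_h\dot p(\tau-t)-\dot p_h(\tau-t))$, so its $X$-norm is exactly the left-hand side of Proposition \ref{prop-pihq-qh-wave} evaluated at time $\tau-t$, which produces the factor $(\tau-t)$.

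For Part 2, I split as in \eqref{eq-Lt-ineq-tri-sch},
\[
\left\|(\Lb_t^n-\Lb_{h,t}^n)z_0\right\|\le \left\|(I-\Pi_h)\Lb_t^n z_0\right\|+\left\|\Pi_h\Lb_t^n z_0-\Lb_{h,t}^n z_0\right\|,
\]
and treat the second (consistency) term by induction on $n$, exactly as in the derivation of \eqref{eq-Lt-second-term-sch}. The case $n=1$ follows by composing the two Part-1 estimates: writing $\Pi_h\Lb_t z_0-\Lb_{h,t}z_0=(\Pi_h\T^-_t-\T^-_{h,t})\T^+_t z_0+\T^-_{h,t}(\T^+_t-\T^+_{h,t})z_0$, I bound the first summand by \eqref{eq-cor-pihq-qh-backward-wave} applied to $\T^+_t z_0$ (whose $\Ds$-norm is $\le\|z_0\|_{\Ds}$ by Lemma \ref{lem-contraction}), and the second by the uniform-in-$h$ boundedness of $\T^-_{h,t}$ on $X$ (which follows from the discrete energy estimate as in the Schr\"odinger case) times \eqref{eq-cor-pihq-qh-forward-wave}, the incidental projection error $\|(I-\Pi_h)\T^+_t z_0\|$ being handled as in the next paragraph. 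The inductive step uses $\|\Lb_{h,t}\|_{\mathcal L(X)}\le 1$ and the contraction $\|\Lb_t^{n-1}z_0\|_{\Ds}\le\|z_0\|_{\Ds}$ of Lemma \ref{lem-contraction}, giving the factor $n\tau$.

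For the first (projection) term $\|(I-\Pi_h)\Lb_t^n z_0\|$, the velocity component is controlled directly by \eqref{eq-est-H}: since $(\Lb_t^n z_0)_2\in\Dz$, one has $\|(I-\pi_h)(\Lb_t^n z_0)_2\|\le Mh^\theta\|(\Lb_t^n z_0)_2\|_\frac12\le Mh^\theta\|z_0\|_{\Ds}$ after Lemma \ref{lem-contraction}. The position component, however, is measured in $\|\cdot\|_\frac12$, where \eqref{eq-est-H} is \emph{not} sufficient: one needs the energy-norm approximation property $\|(I-\pi_h)\varphi\|_\frac12\le Mh^\theta\|\varphi\|_1$. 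Because $\pi_h$ is the $\Dzdemi$-orthogonal projection, $\|(I-\pi_h)\varphi\|_\frac12=\inf_{\psi_h\in H_h}\|\varphi-\psi_h\|_\frac12$, so this reduces to the best-approximation order of the family $(H_h)$ in the energy norm, the natural companion of \eqref{eq-est-H} for the finite element spaces considered. Applying it to $(\Lb_t^n z_0)_1\in\Dztdemi$ and using Lemma \ref{lem-contraction} yields the $n$-independent contribution $Mh^\theta\|z_0\|_{\Ds}$. Adding the two terms gives \eqref{eq-cor-pihq-qh-wave2}.

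The hard part, relative to the Schr\"odinger case where the state norm is the plain $H$-norm, is precisely this control of the \emph{position} component of the projection error in the energy norm $\|\cdot\|_\frac12$: it forces the use of an energy-norm approximation estimate for $(H_h)$ rather than the $L^2$-type bound \eqref{eq-est-H} alone (the energy estimate underlying Proposition \ref{prop-pihq-qh-wave} itself needs only \eqref{eq-est-H}, so this difficulty is localized to the one term above). All the remaining steps --- the time-reversal bookkeeping, the induction, and the repeated passage between the scale $\mathcal D(A_0^\alpha)$ and the product norm of $X$ via Lemma \ref{lem-contraction} --- are routine adaptations of the Schr\"odinger argument.
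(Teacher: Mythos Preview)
Your proof follows the paper's line for line --- Part~1 from Proposition~\ref{prop-pihq-qh-wave} with vanishing source and projected data, Part~2 by the same splitting $\|(I-\Pi_h)\Lb_t^nz_0\|+\|\Pi_h\Lb_t^nz_0-\Lb_{h,t}^nz_0\|$ and induction on $n$. You are in fact more careful than the paper on one point: for the projection term \eqref{eq-Lt-first-term-wave} the paper writes only ``Using \eqref{eq-est-H}\ldots'', but as you observe, the position component of the $X$-norm is $\|\cdot\|_{1/2}$, and \eqref{eq-est-H} alone gives at best $h^{\theta/2}$ there (via $\|(I-\pi_h)\varphi\|_{1/2}^2=\langle(I-\pi_h)\varphi,A_0\varphi\rangle\le Mh^\theta\|\varphi\|_{1/2}\|\varphi\|_1$); the full rate $h^\theta$ needs the energy-norm companion $\|(I-\pi_h)\varphi\|_{1/2}\le Mh^\theta\|\varphi\|_1$, which for the elliptic (Ritz) projection is actually the primary estimate and \eqref{eq-est-H} its Aubin--Nitsche consequence. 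The paper silently relies on the same estimate again in \eqref{eq-S3-num2-wave}, so your reading is the correct one. One small caveat: your use of $\|\Lb_{h,t}\|_{\mathcal L(X)}\le 1$ in the inductive step is safest read as holding on $H_h\times H_h$ (where all iterates live after the first application), since $\pi_h$ is contractive in $\|\cdot\|_{1/2}$ but not necessarily in the $H$-norm; this suffices for the induction and matches the paper's ``uniform boundedness'' usage.
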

\begin{proof}\ 

1. From relations \eqref{eq-Tpm-wave} and \eqref{eq-Tpm-h-wave} defining the continuous and the semi-discretized semigroups $\T^\pm$ and $\T^\pm_h$, \eqref{eq-cor-pihq-qh-forward-wave} and \eqref{eq-cor-pihq-qh-backward-wave}  follow immediately from Proposition \ref{prop-pihq-qh-wave}. 

2. Let $t\in [0,\tau]$ and $(p_0,p_1) \in \Dztdemi \times \Dz$. We have
\begin{equation}\label{eq-Lt-ineq-tri-wave}
\left\| (\Lb^n_t - \Lb^n_{h,t})\left[\begin{matrix}p_0 \\ p_1\end{matrix}\right] \right\| \leq \left\| (\Lb^n_t  - \Pi_h\Lb^n_t) \left[\begin{matrix}p_0 \\ p_1\end{matrix}\right] \right\| + \left\| (\Pi_h\Lb^n_t - \Lb^n_{h,t}) \left[\begin{matrix}p_0 \\ p_1\end{matrix}\right] \right\|.
\end{equation}
Using \eqref{eq-est-H} and the fact that $\|\Lb_t\|_{\mathcal{L}(\D)} \leq 1$ proved in Lemma \ref{lem-contraction} of the Appendix, the first term in the above relation can be estimated as follows 
\begin{equation}\label{eq-Lt-first-term-wave}
\left\| (\Lb^n_t  - \Pi_h\Lb^n_t) \left[\begin{matrix}p_0 \\ p_1\end{matrix}\right] \right\| \leq Mh^\theta \left(\|p_0\|_\frac{3}{2} + \|p_1\|_1\right), \quad \forall h \in (0,h^*).
\end{equation}
For the second term in \eqref{eq-Lt-ineq-tri-wave}, let us prove by induction that for all $n\in\mathbb{N}$
\begin{equation}\label{eq-Lt-second-term-wave}
\left\| (\Pi_h\Lb^n_t - \Lb^n_{h,t}) \left[\begin{matrix}p_0 \\ p_1\end{matrix}\right] \right\| \leq M n\tau h^\theta \left(\|p_0\|_\frac{3}{2} + \|p_1\|_1\right), \quad \forall h \in (0,h^*), \forall t\in[0,\tau].
\end{equation}
We have
$$
\begin{array}{rcl}
\dsp \left\| (\Pi_h\Lb_t  - \Lb_{h,t}) \Pz \right\| &= &\dsp \left\|( \Pi_h\T^-_t\T^+_t - \T_{h,t}^-\T_{h,t}^+ )\Pz \right\| \\
 &\leq & \dsp \ \left\| (\Pi_h\T^-_t  - \T_{h,t}^-)\T_t^+ \Pz \right\| + \left\| \T_{h,t}^-(\T_t^+ - \T_{h,t}^+)\Pz \right\|.
\end{array}
$$
By Lemma \ref{lem-contraction} of the Appendix and equation \eqref{eq-cor-pihq-qh-backward-wave}, we get 
$$
\left\| (\Pi_h\T^-_t - \T_{h,t}^-)\T_t^+  \Pz \right\| \leq M (\tau-t) h^\theta \left(\|p_0\|_\frac{3}{2} + \|p_1\|_1\right), \quad \forall h \in (0,h^*).
$$
Obviously $\|\T^-_h\|_{\mathcal{L}(X)}$ is uniformly bounded (with respect to $h$), and thus by \eqref{eq-est-H} and equations \eqref{eq-cor-pihq-qh-forward-wave}-\eqref{eq-cor-pihq-qh-backward-wave}, we have (by using the triangle inequality)
$$
\left\| \T_{h,t}^-(\T_t^+  - \T_{h,t}^+ ) \Pz \right\| \leq M t h^\theta \left(\|p_0\|_\frac{3}{2} + \|p_1\|_1\right), \quad \forall h \in (0,h^*).
$$
Consequently
\begin{equation}\label{eq-pih-Lt-wave}
\left\| (\Pi_h\Lb_t - \Lb_{h,t}) \Pz \right\| \leq M \tau h^\theta \left(\|p_0\|_\frac{3}{2} + \|p_1\|_1\right), \quad \forall h \in (0,h^*), \forall t\in[0,\tau]
\end{equation}
which shows that \eqref{eq-Lt-second-term-wave} holds for $n=1$. Suppose now that for a given $n\geq 2$, there holds 
\begin{equation}\label{eq-pih-Lt-rec-wave}
\left\| (\Pi_h\Lb^{n-1}_t  - \Lb^{n-1}_{h,t}) \Pz \right\| \leq M (n-1) \tau h^\theta \left(\|p_0\|_\frac{3}{2} + \|p_1\|_1\right).
\end{equation}
From
$$
\left\| (\Pi_h\Lb^n_t - \Lb^n_{h,t}) \Pz \right\| \leq \left\| (\Pi_h \Lb_t \Lb^{n-1}_t - \Lb_{h,t} \Lb^{n-1}_t) \Pz \right\| + \left\| \Lb_{h,t} (\Lb^{n-1}_t - \Lb^{n-1}_{h,t}) \Pz \right\|.
$$
we obtain by using \eqref{eq-pih-Lt-wave} and \eqref{eq-pih-Lt-rec-wave} that (thanks to Lemma \ref{lem-contraction} and the uniform boundedness of $\|\Lb_{h,t}\|_{\mathcal{L}(X)}$ with respect to $h$)
$$
\left\| (\Pi_h\Lb^n_t - \Lb^n_{h,t}) \Pz \right\| \leq M n\tau  h^\theta \left(\|p_0\|_\frac{3}{2} + \|p_1\|_1\right),
$$
which is exactly \eqref{eq-Lt-second-term-wave}. Substituting \eqref{eq-Lt-first-term-wave} and \eqref{eq-Lt-second-term-wave} in \eqref{eq-Lt-ineq-tri-wave}, we obtain the result.
\end{proof}

Now, we can turn to the proof of Theorem \ref{th-main-wave} 
\begin{proof}[of Theorem \ref{th-main-wave}]
Introducing the term $\dsp \sum_{n=0}^{N_h} \Lb_{h,\tau}^n \left[\begin{matrix}w^-(0) \\ \dot w^-(0)\end{matrix}\right] $, we first rewrite the error term $\dsp \left[\begin{matrix}w_0 \\ w_1\end{matrix}\right] - \left[\begin{matrix}w_{0,h} \\ w_{1,h}\end{matrix}\right]=\sum_{n=0}^\infty \Lb_\tau^n \left[\begin{matrix}w^-(0) \\ \dot w^-(0)\end{matrix}\right] -\sum_{n=0}^{N_h} \Lb_{h,\tau}^n \left[\begin{matrix}w_h^-(0) \\ \dot w_h^-(0)\end{matrix}\right]$ in the following form
\begin{multline*}
\left[\begin{matrix}w_0 \\ w_1\end{matrix}\right] - \left[\begin{matrix}w_{0,h} \\ w_{1,h}\end{matrix}\right] 
=
\sum_{n>N_h} \Lb_\tau^n \left[\begin{matrix}w^-(0) \\ \dot w^-(0)\end{matrix}\right] + \sum_{n=0}^{N_h}\left(\Lb_\tau^n - \Lb_{h,\tau}^n\right) \left[\begin{matrix}w^-(0) \\ \dot w^-(0)\end{matrix}\right]\\
\hspace{8cm}+\sum_{n=0}^{N_h} \Lb_{h,\tau}^n \left[\begin{matrix}w^-(0)-w_h^-(0) \\ \dot w^-(0) - \dot w_h^-(0)\end{matrix}\right].\hspace{5.5cm}
\end{multline*}
Therefore, we have 
\begin{equation}\label{eq-diff-z0-z0h-wave}
\left\|\left[\begin{matrix}w_0 \\ w_1\end{matrix}\right] - \left[\begin{matrix}w_{0,h} \\ w_{1,h}\end{matrix}\right] \right\|
\leq S_1+S_2+S_3,
\end{equation}
where we have set 
$$
\left\{
\begin{array}{lll}
\dsp S_1= \sum_{n>N_h} \left\| \Lb_\tau^n \left[\begin{matrix}w^-(0) \\ \dot w^-(0)\end{matrix}\right] \right\|,\\
\dsp S_2=\sum_{n=0}^{N_h} \left\| \left(\Lb_\tau^n - \Lb_{h,\tau}^n\right) \left[\begin{matrix}w^-(0) \\ \dot w^-(0)\end{matrix}\right] \right\|,\\
\dsp S_3=\left(\sum_{n=0}^{N_h} \left\| \Lb_{h,\tau}^n \right\|_{\mathcal{L}(X)}\right) \left\|  \left[\begin{matrix}w^-(0) \\ \dot w^-(0)\end{matrix}\right] \right\|.
\end{array} 
\right.
$$
Note that the term $S_1$ is the truncation error of the tail of the infinite sum \eqref{eq-z0-neumann}, the term $S_2$ represents the cumulated error due to the approximation of the semigroups $\T^\pm$ while the term $S_3$ comes from the approximation of the first iterate $\left[\begin{matrix}w^-(0) \\ \dot w^-(0)\end{matrix}\right]$ of the reconstruction algorithm. 

Since $\eta=\|\Lb_\tau\|_{\mathcal{L}(X)} < 1$, the first term can be estimated very easily using relation \eqref{eq-z0-Ltau}:
\begin{equation}\label{eq-first-term-th-wave}
S_1\leq M\, \frac{\eta^{N_h+1}}{1-\eta} \left( \| w_0 \|_\frac{3}{2} + \| w_1 \|_1 \right).
\end{equation}
The term $S_2$ can be estimated using the estimate \eqref{eq-cor-pihq-qh-wave2} from Proposition \ref{prop-L-tau-h-wave} 
$$
S_2 \leq M \left(\sum_{n=0}^{N_h} (1+n \tau)\right) h^\theta \left( \|w^-(0)\|_\frac{3}{2} + \|\dot w^-(0)\|_1 \right), \quad \forall h \in (0,h^*).
$$
Therefore, using \eqref{eq-z0-Ltau} and the fact that $\|\Lb_\tau\|_{\Ds}<1$ (see Lemma \ref{lem-contraction}) in the above relation, we finally get that 
\begin{equation}\label{eq-second-term-th-wave}
S_2 \leq M\Big[1+(1+\tau)N_h+N_h^2\tau \Big] h^\theta \left( \| w_0 \|_\frac{3}{2} + \| w_1 \|_1 \right), \quad \forall h \in (0,h^*).
\end{equation}
Finally, let us estimate the term $S_3$. As $\|\Lb_{h,\tau}\|_{\mathcal{L}(X)}$ is uniformly bounded by 1 with respect to $h$, we have 
\begin{equation}\label{eq-S3-num1-wave}
\begin{array}{rcl}
S_3 &\leq & \dsp M N_h \left(\| w^-(0) - w_h^-(0) \|_\frac{1}{2} + \| \dot w^-(0) - \dot w_h^-(0) \| \right)\\
 & \leq &MN_h \dsp\Big(\left\| w^-(0) - \pi_h w^-(0)\right\|_\frac{1}{2} + \left\|\pi_h w^-(0)- w_h^-(0)\right\|_\frac{1}{2} \\
& & + \left\| \dot w^-(0) - \pi_h \dot w^-(0)\right\| + \left\|\pi_h \dot w^-(0)- \dot w_h^-(0)\right\| \Big).
\end{array}
\end{equation}
By using \eqref{eq-est-H} and \eqref{eq-z0-Ltau}, we immediately obtain that
\begin{equation}\label{eq-S3-num2-wave}
\| w^-(0) - \pi_h w^-(0) \|_\frac{1}{2} + \| \dot w^-(0) - \pi_h \dot w^-(0) \| \leq Mh^\theta \left( \| w_0 \|_\frac{3}{2} + \| w_1 \|_1 \right).
\end{equation}
To estimate $\| \pi_h w^-(0)- w_h^-(0) \|_\frac{1}{2} + \| \pi_h \dot w^-(0) - \dot w_h^-(0) \|$, we apply twice Proposition \ref{prop-pihq-qh-wave} first for the time reversed backward observer $w^-(\tau-\cdot)$ and then for the forward observer $w^+$ (the time reversal is introduced just because Proposition \ref{prop-pihq-qh-wave} can only be applied to initial value Cauchy problems). After straightforward calculation we obtain that for all $h\in(0,h^*)$ 
\begin{multline}\label{eq-intermediate-main-1-wave}
\| \pi_h w^-(0) - w_h^-(0) \|_\frac{1}{2} + \| \pi_h \dot w^-(0) - \dot w_h^-(0) \| \\
 \leq M h^\theta\Big[ \tau(\|w^+(\tau)\|_\frac{3}{2} + \| \dot w^+(\tau) \|_1 + \|C_0^*y\|_{\frac{1}{2}, \infty}) + \tau^2\|C_0^*y\|_{1,\infty} \Big]\\
\hspace{6cm}+ \int_0^\tau\|C_0^*\left(y(s)-y_h(s)\right)\|ds.
\end{multline}
Applying \eqref{eq-q-norm-1-2} of Lemma \ref{lem-annexe2} of the Appendix with zero initial data, we obtain that 
$$
\|w^+(\tau)\|_\frac{3}{2} + \|\dot w^+(\tau)\|_1 \leq \tau \|C_0^*y\|_{1,\infty}.
$$
Therefore \eqref{eq-intermediate-main-1-wave} also reads
\begin{multline*}
\| \pi_h w^-(0) - w_h^-(0) \|_\frac{1}{2} + \| \pi_h \dot w^-(0) - \dot w_h^-(0) \| \leq M h^\theta(\tau + \tau^2)\|C_0^*y\|_{1,\infty} \\
+ \int_0^\tau\|C_0^*\left(y(s)-y_h(s)\right)\|ds.\hspace{0.5cm}
\end{multline*}
As $C_0^*C_0 \in \mathcal{L}\left(\Dztdemi\right) \cap \mathcal{L}\left(\Dzdemi\right)$ and $\|w\|_{\frac{3}{2},\infty} + \|\dot w\|_{1,\infty} = \|w_0\|_\frac{3}{2} + \|w_1\|_1$ (since $A$ is skew-adjoint), the last relation becomes 
\begin{multline*}
\| \pi_h w^-(0) - w_h^-(0) \|_\frac{1}{2} + \| \pi_h \dot w^-(0) - \dot w_h^-(0) \| \leq M h^\theta(\tau + \tau^2)\left(\|w_0\|_\frac{3}{2} + \|w_1\|_1\right) \\
+ \int_0^\tau\|C_0^*\left(y(s)-y_h(s)\right)\|ds.
\end{multline*}
Using the above relation and \eqref{eq-S3-num2-wave} in \eqref{eq-S3-num1-wave}, we get
\begin{equation}\label{eq-third-term-th-wave}
S_3 \leq M N_h \left(h^\theta (1+\tau + \tau^2)\left(\|w_0\|_\frac{3}{2} + \|w_1\|_1\right) + \int_0^\tau\|C_0^*\left(y(s)-y_h(s)\right)\|ds \right).
\end{equation}
Substituting \eqref{eq-first-term-th-wave}, \eqref{eq-second-term-th-wave} and \eqref{eq-third-term-th-wave} in \eqref{eq-diff-z0-z0h-wave}, we get for all $h\in(0,h^*)$ 
\begin{multline*}
\| w_0 - w_{0,h} \|_\frac{1}{2} + \| w_1 - w_{1,h} \| \leq \\
M \Bigg[ \bigg(\frac{\eta^{N_h+1}}{1-\eta} +h^\theta \left[1+(1+\tau+\tau^2)N_h+\tau N_h^2 \right] \bigg)\left(\|w_0\|_\frac{3}{2} + \|w_1\|_1\right) \\
+ N_h \int_0^\tau \| C_0^*\left(y(s)-y_h(s)\right) \|ds \Bigg],
\end{multline*}
which leads to the result (with possibly reducing the value of $h^*$).
\end{proof}
\subsection{Full Discretization}
\label{Subsct_Wave2}
\subsubsection{Statement of the main result}
In order to approximate \eqref{eq-general-q-FV-wave} in space and time, we use a finite difference scheme in time combined with the previous Galerkin approximation in space. We discretize the time interval $[0,\tau]$ using a time step $\Delta t>0$. We obtain a discretization $t_k = k\Delta t$, where $0\leq k \leq K$ and where we assumed, without loss of generality, that $\tau =K\Delta t$. Given a function of time $f$ of class ${\cal C}^2$, we approximate its first and second derivative at time $t_k$ by 
$$
f'(t_k) \simeq D_tf(t_k) := \frac{f(t_k)-f(t_{k-1})}{\Delta t}.
$$
$$
f''(t_k) \simeq D_{tt}f(t_k) := \frac{f(t_k)-2f(t_{k-1})+f(t_{k-2})}{\Delta t^2}.
$$
We suppose that $(p_{0,h,\Delta t},p_{1,h,\Delta t})\in H_h \times H_h$ and $f^k_{h}$, for $0\le k \le K$, are given approximations of $(p_0,p_1)$ and $f(t_k)$ in the space $X$ and $H$ respectively. We define the approximate solution $(p_{h}^k)_{0\leq k \leq K}$ of \eqref{eq-general-q-FV-wave} as the solution of the following problem: $p_{h}^k\in H_h$ such that for all $\varphi_h \in H_h$
\begin{equation}\label{eq-general-qhk-FVDF-wave}
\left\{\begin{array}{ll}
\left\langle D_{tt}p_h^k,\varphi_h\right\rangle + \left\langle p_h^k,\varphi_h \right\rangle_\frac{1}{2} + \left\langle C_0^*C_0D_tp_h^k,\varphi_h \right\rangle = \left\langle f_h^k,\varphi_h \right\rangle,\qquad 2\le k \le K\\
p_h^0=p_{0,h,\Delta t}, \quad p_h^1 = p_h^0 + \Delta t \, p_{1,h,\Delta t}.
\end{array}\right.
\end{equation}
Note that the above procedure leads to a natural approximation $\T^\pm_{h,\Delta t,k}$ of the continuous operators $\T^\pm_{t_k}$ by setting
\begin{equation}\label{eq-Tpm-h-Deltat-wave}
\left\{
\begin{array}{l}
\T^+_{t_k} \Pz \simeq \T^+_{h,\Delta t,k} \Pz := \left[ \begin{matrix} p^k_h \\ D_tp^k_h \end{matrix} \right]\\
\\
\T^-_{t_k} \Pz \simeq \T^-_{h,\Delta t,k} \Pz := \left[ \begin{matrix} p^{K-k}_h \\ -D_tp^{K-k}_h \end{matrix} \right]
\end{array}
\right.
\end{equation}
where $p^k_h$ solves \eqref{eq-general-qhk-FVDF-wave} with $f^k_{h}= 0$ for all $0\le k \le K$ and for $(p_{0,h,\Delta t},p_{1,h, \Delta t})=(\pi_h p_0, \pi_h p_1)$. Obviously, this also leads to a fully discretized approximation of the operator  $\Lb_\tau=\T^-_\tau\T^+_\tau$ by setting
$$
\Lb_{h,\Delta t,K} = \T^-_{h,\Delta t,K}\T^+_{h,\Delta t,K}.
$$
Assume that for all $0\le k \le K$, $y_h^k$ is a given approximation of $y(t_k)$ in $Y$ and
 let $\left(w_{h}^+\right)^k$ and $\left(w_{h}^-\right)^k$ be respectively  the approximations of \eqref{eq-sys-obs-forward-wave} and \eqref{eq-sys-obs-backward-wave}  obtained via \eqref{eq-general-qhk-FVDF-wave} as follows:
\begin{itemize}
\item For all $0\le k \le K$, $\left(w_{h}^+\right)^k=p^k_{h}$ where $p^k_{h}$ solves \eqref{eq-general-qhk-FVDF-wave} with  $f^k_{h} = C_0^*y_{h}^k$ and $(p_{0,h,\Delta t},p_{1,h,\Delta t})=(0,0)$,
\item For all $0\le k \le K$, $\left(w_{h}^-\right)^k=p^{K-k}_{h}$ where $p^k_{h}$ solves \eqref{eq-general-qhk-FVDF-wave} with  $f^k_{h} = - C_0^*y_{h}^{K-k}$ and $(p_{0,h,\Delta t},p_{1,h,\Delta t})=((w_{h}^+)^K,-D_t(w_{h}^+)^K)$.
\end{itemize}
Then, our main result (the fully discrete counterpart of Theorem \ref{th-main-wave}) reads as follows 
\begin{theorem}\label{th-main-full-wave}
Let $A_0 : \Dz \rightarrow H$ be a strictly positive self-adjoint operator and $C_0 \in \mathcal{L}(H,Y)$ such that $C_0^*C_0 \in \mathcal{L}\left(\Dztdemi\right)\cap\mathcal{L}\left(\Dzdemi\right)$. Define $(A,C)$ by \eqref{eq-def-A} and \eqref{eq-def-C}. Assume that the pair $(A,C)$ is exactly observable in time $\tau>0$ and set $\eta:=\|\Lb_\tau\|_{\mathcal{L}(X)} < 1$. Let $(w_0,w_1) \in \Dztdemi \times \Dz$ be the initial value of \eqref{eq-sys-initial-wave} and let $(w_{0,h,\Delta t},w_{1,h,\Delta t})$ be defined by 
\begin{equation}\label{eq-w0hDeltatw1hDeltat-neumann}
\left[\begin{matrix} w_{0,h,\Delta t} \\ w_{1,h,\Delta t} \end{matrix}\right] = 
\sum_{n=0}^{N_h} \Lb_{h,\Delta t,K}^{n}  \left[\begin{matrix} (w_h^-)^{0} \\ D_t(w_h^-)^{1}\end{matrix}\right],
\end{equation}
where  $\dsp D_t(w_h^-)^1 = \dfrac{(w_h^-)^{1}-(w_h^-)^{0}}{\Delta t}$.

Then there exist $M > 0$, $h^* > 0$ and $\Delta t^* > 0$ such that for all $h \in (0, h^*)$ and $\Delta t \in (0, \Delta t^*)$
\begin{multline*}
\| w_0 - w_{0,h,\Delta t} \|_\frac{1}{2} + \| w_1 - w_{1,h,\Delta t} \| \\
\leq M \Bigg[ \Bigg(\frac{\eta^{N_{h,\Delta t}+1}}{1-\eta} + \left(h^\theta + \Delta t\right) \left(1+\tau\right) N_{h,\Delta t}^2 \Bigg)\left(\| w_0 \|_\frac{3}{2} + \| w_1 \|_1\right)\\
\hspace{7cm} + N_{h,\Delta t} \Delta t\sum_{\ell=0}^K \left\| C_0^*(y(t_\ell)-y_h^\ell) \right\| \Bigg].
\end{multline*}
\end{theorem}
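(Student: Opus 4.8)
The plan is to reproduce, for the second-order fully discrete scheme \eqref{eq-general-qhk-FVDF-wave}, the three-step architecture already used for Theorem~\ref{th-main-full-sch} and Theorem~\ref{th-main-wave}. The first and decisive step is to establish the fully discrete analogue of Proposition~\ref{prop-pihq-qh-wave}, namely a bound of the form
\[
\| \pi_h p(t_k)-p_h^k \|_\frac12 + \| D_t(\pi_h p(t_k)-p_h^k) \| \le M\Big[ \|\pi_h p_0-p_{0,h,\Delta t}\|_\frac12 + \|\pi_h p_1-p_{1,h,\Delta t}\| + \Delta t\sum_{\ell=1}^k\|f(t_\ell)-f_h^\ell\| + (h^\theta+\Delta t)\,(\cdots) \Big],
\]
where $(\cdots)$ gathers the regularity terms $t_k(\|p_0\|_\frac32+\|p_1\|_1+\|f\|_{\frac12,\infty}+\|\dot f\|_\infty)+t_k^2\|f\|_{1,\infty}$. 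The proof marries the continuous wave energy of Proposition~\ref{prop-pihq-qh-wave} with the discrete energy device of Proposition~\ref{prop-pihq-qhk-sch}. Writing $e^k=\pi_h p(t_k)-p_h^k$ and $\mathcal{E}_h^k=\tfrac12\|D_t e^k\|^2+\tfrac12\|e^k\|_\frac12^2$, I would test the error equation with $\varphi_h=D_t e^k$ and apply the identity $\tfrac12(\|u\|^2-\|v\|^2+\|u-v\|^2)=\mathrm{Re}\,\langle u-v,u\rangle$ to \emph{both} the $X$-product (with $u=D_t e^k$, $v=D_t e^{k-1}$, since $D_{tt}e^k=(D_t e^k-D_t e^{k-1})/\Delta t$) and the $\tfrac12$-product (with $u=e^k$, $v=e^{k-1}$). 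Because the stiffness and damping terms of \eqref{eq-general-qhk-FVDF-wave} are taken implicitly at level $k$, the three increments $\tfrac{\Delta t}{2}\|D_{tt}e^k\|^2$, $\tfrac{\Delta t}{2}\|D_t e^k\|_\frac12^2$ and $\|C_0 D_t e^k\|^2$ all carry a favourable sign and may be discarded, leaving $D_t\mathcal{E}_h^k\le\|G^k\|\,\|D_t e^k\|$, hence $D_t\sqrt{\mathcal{E}_h^k}\le M\|G^k\|$ via \eqref{eq-DtId-1}--\eqref{eq-DtId-2}; here $G^k$ collects the projection and consistency residuals. Summing over $k$ and accounting for the two-level startup $p_h^1=p_h^0+\Delta t\,p_{1,h,\Delta t}$ then gives the estimate.

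The hard part will be the consistency analysis of the two finite differences appearing in $G^k$. The backward quotient is handled exactly as in \eqref{eq-intermediate-full-4-sch}, $\|D_t p(t_k)-\dot p(t_k)\|\le M\Delta t\sup\|\ddot p\|$. The delicate term is $D_{tt}$: since it is the symmetric quotient centred at $t_{k-1}$ whereas the stiffness and source are evaluated at $t_k$, I would deliberately \emph{not} expand symmetrically (which would require $p^{(4)}$), but instead use Taylor's formula with integral remainder about $t_k$ to write
\[
p(t_k)-2p(t_{k-1})+p(t_{k-2})-\Delta t^2\,\ddot p(t_k) = \int (\,\cdots\,)\,\dddot p(s)\,ds,
\]
which yields $\|D_{tt}p(t_k)-\ddot p(t_k)\|\le M\Delta t\sup\|\dddot p\|$. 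This sacrifices the second-order symmetry but keeps the scheme first order in time (matching the $O(h^\theta+\Delta t)$ rate) at the price of controlling $\dddot p$ in $H$ only. That control is obtained exactly as for $\ddot q$ in Proposition~\ref{prop-pihq-qhk-sch}: differentiating \eqref{eq-general-q-wave} once gives $\dddot p=-A_0\dot p-C_0^*C_0\ddot p+\dot f$, so $\|\dddot p\|\le\|\dot p\|_1+M\|\ddot p\|+\|\dot f\|$, and the norms $\|\dot p\|_1$, $\|\ddot p\|_\frac12$, $\|\ddot p\|$ are bounded through \eqref{eq-dotq-norm-1} of Lemma~\ref{lem-annexe2} applied to $q=\left[\begin{smallmatrix}p\\\dot p\end{smallmatrix}\right]$. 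I expect this regularity bookkeeping to be the genuine obstacle: it is exactly what forces $(w_0,w_1)\in\Dztdemi\times\Dz$ and cannot be relaxed, which is the point flagged in the Remark following the Schrödinger theorem.

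With this Proposition in hand, the second step is the fully discrete counterpart of Proposition~\ref{prop-L-tau-h-wave}. Taking $f=f_h^k=0$ and $(p_{0,h,\Delta t},p_{1,h,\Delta t})=(\pi_h p_0,\pi_h p_1)$ gives, through the definitions \eqref{eq-Tpm-h-Deltat-wave}, the one-step bounds $\|(\Pi_h\T^+_{t_k}-\T^+_{h,\Delta t,k})\Pz\|\le M t_k(h^\theta+\Delta t)(\|p_0\|_\frac32+\|p_1\|_1)$ and its backward analogue, with $\Pi_h$ the block projection of Proposition~\ref{prop-L-tau-h-wave}. An induction on $n$ identical to \eqref{eq-pih-Lt-full-sch}--\eqref{eq-pih-Lt-rec-full-sch}, using $\|\Lb_t\|_{\mathcal{L}(\D)}\le1$ from Lemma~\ref{lem-contraction} and the uniform $\mathcal{L}(X)$-boundedness of $\Lb_{h,\Delta t,K}$ by $1$, then yields
\[
\big\| (\Lb_{t_k}^n-\Lb_{h,\Delta t,k}^n)\Pz \big\| \le M\big[ h^\theta + n\tau(h^\theta+\Delta t) \big]\big( \|p_0\|_\frac32 + \|p_1\|_1 \big).
\]

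The final step then mirrors the proof of Theorem~\ref{th-main-wave} verbatim. I would split the error into $S_1+S_2+S_3$ as in \eqref{eq-diff-z0-z0hk-sch}: $S_1$ is the geometric tail $\tfrac{\eta^{N_{h,\Delta t}+1}}{1-\eta}(\|w_0\|_\frac32+\|w_1\|_1)$ obtained from \eqref{eq-z0-Ltau}; $S_2$ is summed from the operator estimate just proved, producing the factor $(h^\theta+\Delta t)(1+\tau)N_{h,\Delta t}^2$ after using $\|\Lb_\tau\|_{\Ds}<1$; and $S_3$ is controlled by applying the new Proposition twice, first to the time-reversed backward observer and then to the forward observer, which generates the data-misfit Riemann sum $N_{h,\Delta t}\,\Delta t\sum_{\ell=0}^K\|C_0^*(y(t_\ell)-y_h^\ell)\|$ together with an $(h^\theta+\Delta t)$ term, after bounding $\|w^+(\tau)\|_\frac32+\|\dot w^+(\tau)\|_1$ via \eqref{eq-q-norm-1-2} and invoking $C_0^*C_0\in\mathcal{L}(\Dztdemi)\cap\mathcal{L}(\Dzdemi)$ together with the isometry $\|w\|_{\frac32,\infty}+\|\dot w\|_{1,\infty}=\|w_0\|_\frac32+\|w_1\|_1$. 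Collecting the three contributions and possibly shrinking $h^*$ and $\Delta t^*$ gives the announced estimate.
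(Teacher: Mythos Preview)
Your proposal is correct and follows essentially the same approach as the paper: the same discrete energy $\mathcal{E}_h^k=\tfrac12\|D_t e^k\|^2+\tfrac12\|e^k\|_{1/2}^2$ tested against $\varphi_h=D_t e^k$, the same control of the second-difference consistency via $\dddot p=-A_0\dot p-C_0^*C_0\ddot p+\dot f$, and the same induction and $S_1+S_2+S_3$ splitting. The only cosmetic difference is that the paper packages the consistency residual as $\gamma^k=\tfrac{1}{\Delta t^2}(r_1(t_k)-r_1(t_{k-1}))+\tfrac{1}{\Delta t}r_2(t_k)$ and tracks each piece separately (in both $H$ and $\Dzdemi$), and it states the Proposition's left-hand side with $\|\pi_h\dot p(t_k)-D_t p_h^k\|$ rather than your $\|D_t e^k\|$, bridging the two via the extra $\tfrac{1}{\Delta t}\|r_1(t_k)\|$ term.
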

\begin{corollary}
Under the assumptions of Theorem \ref{th-main-full-wave}, we set 
$$
N_{h,\Delta t} = \frac{\ln (h^\theta + \Delta t)}{\ln \eta}
$$
Then, there exist $ M_\tau > 0 $, $ h^* > 0 $ and $\Delta t^*>0$ such that for all $h \in (0, h^*)$ and $\Delta t\in(0,\Delta t^*)$ 

\begin{multline}\label{eq-w0w0hk-w1w1hk}
\| w_0 - w_{0,h,\Delta t} \|_\frac{1}{2} + \| w_1 - w_{1,h,\Delta t} \| \leq M_\tau \bigg[(h^\theta+\Delta t) \ln^2 (h^\theta+\Delta t) \left(\| w_0 \|_\frac{3}{2} + \| w_1 \|_1\right) \\
+ \left|\ln (h^\theta + \Delta t)\right| \Delta t \sum_{\ell=0}^{K}\left\|C_0^*\left(y(t_\ell)-y_h^\ell\right)\right\| \bigg].
\end{multline}
\end{corollary}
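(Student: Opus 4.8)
The plan is to reproduce, step for step, the architecture that proves Theorems~\ref{th-main-sch}, \ref{th-main-full-sch} and \ref{th-main-wave}. I would first establish a fully discrete error estimate for the scheme \eqref{eq-general-qhk-FVDF-wave} approximating \eqref{eq-general-q-FV-wave} (the wave analogue of Proposition~\ref{prop-pihq-qhk-sch}), then deduce from it error bounds for the discrete operators $\T^\pm_{h,\Delta t,k}$ and for the powers $\Lb^n_{h,\Delta t,K}$ (the analogue of Propositions~\ref{prop-L-tau-hk-sch} and \ref{prop-L-tau-h-wave}), and finally assemble the three contributions $S_1+S_2+S_3$ exactly as in the proof of Theorem~\ref{th-main-wave}.

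The core is the following estimate, to be proved first: for $(p_0,p_1)\in\Ds$ there exist $M,h^*,\Delta t^*>0$ such that for all admissible $h,\Delta t$ and all $0\le k\le K$,
\[
\|\pi_hp(t_k)-p_h^k\|_\frac12+\|\pi_h\dot p(t_k)-D_tp_h^k\|\le M\big(\text{initial terms}+\text{consistency terms}\big).
\]
Writing $e^k=\pi_hp(t_k)-p_h^k$, I would subtract \eqref{eq-general-qhk-FVDF-wave} from \eqref{eq-general-q-FV-wave} taken at $t=t_k$ and tested against $\varphi_h\in H_h$, inserting a first-order Taylor residual $r_1(t_k)$ for the first difference $D_tp(t_k)$ in the damping term (as in \eqref{eq-taylor-sch}) and a residual $r_2(t_k)$ for the second difference $D_{tt}p(t_k)$, and using the Galerkin orthogonality $\langle\pi_hp-p,\varphi_h\rangle_\frac12=0$. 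This produces the discrete error equation $\langle D_{tt}e^k,\varphi_h\rangle+\langle e^k,\varphi_h\rangle_\frac12+\langle C_0^*C_0D_te^k,\varphi_h\rangle=\langle g^k,\varphi_h\rangle$, where $g^k$ gathers $(\pi_h-I)D_{tt}p(t_k)$, $(\pi_h-I)C_0^*C_0D_tp(t_k)$, the temporal residuals $\Delta t^{-2}r_2(t_k)$ and $\Delta t^{-1}C_0^*C_0r_1(t_k)$, and $f(t_k)-f_h^k$. Mirroring Fujita and Suzuki \cite[p.\,865]{FujSuz91}, I would introduce the discrete energy
\[
\mathcal E_h^k=\tfrac12\|D_te^k\|^2+\tfrac12\|e^k\|_\frac12^2,
\]
test the error equation with $\varphi_h=D_te^k$ and use the two summation-by-parts identities $\langle D_{tt}e^k,D_te^k\rangle=D_t(\tfrac12\|D_te^k\|^2)+\tfrac{1}{2\Delta t}\|D_te^k-D_te^{k-1}\|^2$ and $\langle e^k,D_te^k\rangle_\frac12=D_t(\tfrac12\|e^k\|_\frac12^2)+\tfrac{1}{2\Delta t}\|e^k-e^{k-1}\|_\frac12^2$. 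Both numerical dissipation terms are nonnegative and the damping contributes $\|C_0D_te^k\|^2\ge0$, so all may be discarded, leaving $D_t\mathcal E_h^k\le\langle g^k,D_te^k\rangle$. Exactly as in \eqref{eq-DtId-1}--\eqref{eq-DtId-2} this bounds $D_t\sqrt{\mathcal E_h^k}$ by $M\|g^k\|$, which I would then sum over $k$, the initial terms $\|\pi_hp_0-p_{0,h}\|_\frac12+\|\pi_hp_1-p_{1,h}\|$ arising from $\mathcal E_h^1$ and the starting rule $p_h^1=p_h^0+\Delta t\,p_{1,h}$.

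It remains to estimate $g^k$. The spatial projection terms are handled by \eqref{eq-est-H}, giving factors $h^\theta\|D_{tt}p(t_k)\|_\frac12$ and $h^\theta\|D_tp(t_k)\|_\frac12$; the temporal residuals are controlled, as in \eqref{eq-intermediate-full-3-sch}--\eqref{eq-intermediate-full-4-sch}, by integral remainders $\Delta t^{-1}\|r_1(t_k)\|\le M\Delta t\sup\|\ddot p\|$ and $\Delta t^{-2}\|r_2(t_k)\|\le M\Delta t\sup\|\dddot p\|$, all arising time derivatives being bounded through the regularity of $\left[\begin{matrix}p\\\dot p\end{matrix}\right]\in C([0,\tau],\Ds)$ from Lemma~\ref{lem-annexe2} together with the differentiated equation $\dddot p=-A_0\dot p-C_0^*C_0\ddot p+\dot f$, which expresses $\dddot p$ via $\dot p\in\Dz$, $\ddot p$ and $\dot f=\pm C_0^*C_0\ddot w$. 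This yields the $(h^\theta+\Delta t)$ estimate. Specializing it to $f=f_h=0$ and $(p_{0,h},p_{1,h})=(\pi_hp_0,\pi_hp_1)$ gives the analogues of \eqref{eq-cor-pihq-qhk-forward-sch}--\eqref{eq-cor-pihq-qhk-backward-sch} for $(\Pi_h\T^\pm_{t_k}-\T^\pm_{h,\Delta t,k})\Pz$, which I would propagate to $(\Lb^n_{t_k}-\Lb^n_{h,\Delta t,k})\Pz$ by the induction of Propositions~\ref{prop-L-tau-hk-sch} and \ref{prop-L-tau-h-wave}, using $\|\Lb_t\|_{\mathcal{L}(\D)}\le1$ (Lemma~\ref{lem-contraction}) and the uniform boundedness of $\|\Lb_{h,\Delta t,K}\|_{\mathcal{L}(X)}$. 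Finally I would split the vector error $\left[\begin{matrix}w_0\\w_1\end{matrix}\right]-\left[\begin{matrix}w_{0,h,\Delta t}\\w_{1,h,\Delta t}\end{matrix}\right]$ into $S_1+S_2+S_3$ as in \eqref{eq-diff-z0-z0h-wave}: $S_1$ is the Neumann tail bounded via \eqref{eq-z0-Ltau}, $S_2$ is controlled by the $\Lb^n$ estimate and $\|\Lb_\tau\|_{\Ds}<1$, and $S_3$ by applying the core proposition twice, to the forward observer $w^+$ and to the time-reversed backward observer $w^-(\tau-\cdot)$, together with \eqref{eq-est-H}.

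The hard part will be the discrete energy estimate itself: choosing the discrete energy for which the backward second difference $D_{tt}$ and the backward damping $D_t$ both telescope with dissipative signs, and then controlling the second-order consistency residual $r_2$. Since the scheme is only first-order consistent in time, $r_2$ produces the third time derivative $\dddot p$, whose boundedness forces $\dot p\in\Dz$, i.e. $(w_0,w_1)\in\Ds$; this is precisely the regularity restriction in the statement and the reason the analysis does not reach rougher data.
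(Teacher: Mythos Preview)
Your proposal is a correct and faithful sketch of the proof of Theorem~\ref{th-main-full-wave} itself --- the discrete energy argument, the residual bounds, the induction on the powers of $\Lb$, and the $S_1+S_2+S_3$ decomposition all mirror the paper's Propositions~\ref{prop-pihq-qhk-wave}, \ref{prop-L-tau-hk-wave} and the ensuing proof. But that is not what this Corollary asks you to prove. The Corollary takes Theorem~\ref{th-main-full-wave} as given (``Under the assumptions of Theorem~\ref{th-main-full-wave}'') and merely substitutes the specific value $N_{h,\Delta t}=\ln(h^\theta+\Delta t)/\ln\eta$ into its estimate: then $\eta^{N_{h,\Delta t}+1}=\eta(h^\theta+\Delta t)$, the term $(h^\theta+\Delta t)N_{h,\Delta t}^2$ becomes $(h^\theta+\Delta t)\ln^2(h^\theta+\Delta t)/\ln^2\eta$, and $N_{h,\Delta t}=|\ln(h^\theta+\Delta t)|/|\ln\eta|$; absorbing the $\tau$- and $\eta$-dependent constants into $M_\tau$ gives \eqref{eq-w0w0hk-w1w1hk}. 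The paper treats this as obvious and gives no separate proof (cf.\ the parallel Corollary after Theorem~\ref{th-main-sch}, explicitly ``left to the reader because of its simplicity''). So your plan is not wrong, just vastly more than is required here; the actual proof is two lines of substitution.
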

\subsubsection{Proof of Theorem \ref{th-main-full-wave}}
As in the semi-discrete case, the main ingredient for the convergence analysis is the following result (the counterpart of Proposition \ref{prop-pihq-qh-wave}) which gives the error estimate for the full approximation of the general system \eqref{eq-general-q-FV-wave} by \eqref{eq-general-qhk-FVDF-wave}.
\begin{proposition}\label{prop-pihq-qhk-wave}
Given $(p_0,p_1) \in \Dztdemi \times \Dz$ and $(p_{0,h,\Delta t},p_{1,h,\Delta t}) \in H_h \times H_h$, let $p$ and $(p_h^k)_k$ be the solutions of \eqref{eq-general-q-FV-wave} and \eqref{eq-general-qhk-FVDF-wave} respectively. Assume that $C_0^*C_0 \in \mathcal{L}\left(\Dzdemi\right)$. Then, there exist $M>0$, $h^*>0$ and $\Delta t^*>0$ such that for all $1\le k\le K$, all $h\in(0,h^*)$ and all $\Delta t\in(0,\Delta t^*)$
\begin{multline*}
\| \pi_hp(t_k)-p_h^k \|_\frac{1}{2} + \| \pi_h \dot p(t_k)- D_t p_h^k \| \leq M\bigg\{\| \pi_hp_0-p_{0,h,\Delta t} \|_\frac{1}{2} + \| \pi_hp_1-p_{1,h,\Delta t} \|\\
 + \left(h^\theta + \Delta t \right) \left[ t_k\left( \| p_0 \|_\frac{3}{2} + \| p_1 \|_1 + \| f \|_{\frac{1}{2},\infty} + \|\dot{f}\|_\infty \right) + t_k^2 \| f \|_{1,\infty} \right]\\
+ \Delta t \sum_{\ell=1}^{k}\|f(t_\ell)-f_{h}^\ell\|\bigg\}.
\end{multline*}
\end{proposition}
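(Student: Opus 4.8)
The plan is to follow the architecture of the proof of Proposition \ref{prop-pihq-qhk-sch}, replacing the first order energy used there by a fully discrete analogue of the wave energy appearing in Proposition \ref{prop-pihq-qh-wave}. Writing $e^k=\pi_hp(t_k)-p_h^k\in H_h$, I would first produce the error equation. Evaluating \eqref{eq-general-q-FV-wave} at $t=t_k$, substituting $\ddot p(t_k)=D_{tt}p(t_k)-\rho_2(t_k)$ and $\dot p(t_k)=D_tp(t_k)-\rho_1(t_k)$ (with $\rho_1,\rho_2$ the consistency errors of the backward differences), subtracting \eqref{eq-general-qhk-FVDF-wave}, and using the orthogonality $\langle\pi_hp(t_k)-p(t_k),\varphi_h\rangle_\frac{1}{2}=0$, one gets for $2\le k\le K$ and all $\varphi_h\in H_h$
\[
\langle D_{tt}e^k,\varphi_h\rangle+\langle e^k,\varphi_h\rangle_\frac{1}{2}+\langle C_0^*C_0D_te^k,\varphi_h\rangle=\langle G^k,\varphi_h\rangle,
\]
where $G^k$ collects the source defect $f(t_k)-f_h^k$, the time residuals $\rho_2(t_k)$ and $C_0^*C_0\rho_1(t_k)$, and the space projection residuals $(\pi_h-I)D_{tt}p(t_k)$ and $C_0^*C_0(\pi_h-I)D_tp(t_k)$; note that the stiffness projection error drops out by orthogonality, so $G^k$ is expressed purely through the $H$ inner product.

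The heart of the argument is the choice of the discrete energy $\mathcal{E}_h^k=\frac{1}{2}\|D_te^k\|^2+\frac{1}{2}\|e^k\|_\frac{1}{2}^2$ and the test function $\varphi_h=D_te^k=(e^k-e^{k-1})/\Delta t$. Applying the identity $\mbox{Re}\,\langle u-v,u\rangle=\frac{1}{2}(\|u\|^2-\|v\|^2+\|u-v\|^2)$ twice, once with $(u,v)=(D_te^k,D_te^{k-1})$ for the $D_{tt}$ term and once with $(u,v)=(e^k,e^{k-1})$ inside $\langle\cdot,\cdot\rangle_\frac{1}{2}$ for the stiffness term, yields $D_t\mathcal{E}_h^k$ plus the remainders $\frac{1}{2\Delta t}\|D_te^k-D_te^{k-1}\|^2$ and $\frac{1}{2\Delta t}\|e^k-e^{k-1}\|_\frac{1}{2}^2$, while the damping term contributes $\|C_0D_te^k\|^2\ge 0$. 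All three extra contributions are nonnegative and sit on the left-hand side, hence can be discarded: this is precisely where the implicit (backward difference) nature of \eqref{eq-general-qhk-FVDF-wave} pays off, giving an unconditionally stable estimate and therefore a $\Delta t^*$ independent of $h$. Passing to $D_t\sqrt{\mathcal{E}_h^k}$ through the identities \eqref{eq-DtId-1} and \eqref{eq-DtId-2} then gives $D_t\sqrt{\mathcal{E}_h^k}\le M\|G^k\|$.

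It remains to estimate $\|G^k\|$ and to sum. The space residuals are controlled by \eqref{eq-est-H} as $Mh^\theta(\|\ddot p(t_k)\|_\frac{1}{2}+\|\dot p(t_k)\|_\frac{1}{2})$ up to higher order terms, and $\|\ddot p(t_k)\|_\frac{1}{2}$, $\|\dot p(t_k)\|_1$ are bounded via relations \eqref{eq-q-norm-1-2} and \eqref{eq-dotq-norm-1} of Lemma \ref{lem-annexe2} applied to $q=\left[\begin{matrix}p\\ \dot p\end{matrix}\right]$ and $F=\left[\begin{matrix}0\\ f\end{matrix}\right]$, as in Proposition \ref{prop-pihq-qh-wave}. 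For the time residuals, a Taylor/integral expansion as in \eqref{eq-taylor-sch} gives $\|\rho_1(t_k)\|\le M\Delta t\,\|\ddot p\|_\infty$, while the non-centered second difference satisfies $\|\rho_2(t_k)\|\le M\Delta t\,\|\dddot p\|_\infty$; differentiating \eqref{eq-general-q-wave} yields $\dddot p=-A_0\dot p-C_0^*C_0\ddot p+\dot f$, so $\|\dddot p\|_\infty$ is bounded through Lemma \ref{lem-annexe2} by $\|p_0\|_\frac{3}{2}+\|p_1\|_1+\|f\|_{1,\infty}+\|\dot f\|_\infty$, which is exactly the origin of the $\|\dot f\|_\infty$ term in the statement. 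Summing $\sqrt{\mathcal{E}_h^k}=\sqrt{\mathcal{E}_h^1}+\Delta t\sum_{\ell=2}^k D_t\sqrt{\mathcal{E}_h^\ell}$ turns $\sum\Delta t\, t_\ell$ into the $t_k^2\|f\|_{1,\infty}$ factor, produces the term $\Delta t\sum_\ell\|f(t_\ell)-f_h^\ell\|$, and leaves $\sqrt{\mathcal{E}_h^1}$. The claimed quantity is then recovered from $\|\pi_hp(t_k)-p_h^k\|_\frac{1}{2}=\|e^k\|_\frac{1}{2}\le\sqrt{2\mathcal{E}_h^k}$ and $\|\pi_h\dot p(t_k)-D_tp_h^k\|\le\|D_te^k\|+\|\pi_h\rho_1(t_k)\|\le\sqrt{2\mathcal{E}_h^k}+M\Delta t(\cdots)$, the last piece being of lower order by \eqref{eq-est-H}.

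Two points require care and are the main obstacles. The first is the initialization: $\mathcal{E}_h^1$ involves $D_te^1=(e^1-e^0)/\Delta t$ with $e^0=\pi_hp_0-p_{0,h,\Delta t}$ and $e^1=\pi_hp(t_1)-p_h^0-\Delta t\,p_{1,h,\Delta t}$, so a first order expansion of $p(t_1)$ around $0$, combined with the starting rule $p_h^1=p_h^0+\Delta t\,p_{1,h,\Delta t}$, gives $\sqrt{\mathcal{E}_h^1}\le M\big(\|\pi_hp_0-p_{0,h,\Delta t}\|_\frac{1}{2}+\|\pi_hp_1-p_{1,h,\Delta t}\|+\Delta t(\cdots)\big)$. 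The second, genuinely delicate, point is engineering the discrete energy so that the remainders generated by the \emph{non-centered} operators $D_{tt}$ and $D_t$ retain a favourable sign; this is what makes the estimate hold with no CFL restriction, in contrast with the conservative (undamped) schemes of Raviart and Thomas \cite{RavTho98} and Dautray and Lions \cite{DauLio85}, and it is the essential new ingredient of the proof.
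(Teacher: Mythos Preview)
Your proposal is correct and follows essentially the same approach as the paper: the same discrete energy $\mathcal{E}_h^k=\frac{1}{2}\|D_te^k\|^2+\frac{1}{2}\|e^k\|_\frac{1}{2}^2$, the same test function $\varphi_h=D_te^k$, the same sign argument via the identity $\mbox{Re}\,\langle u-v,u\rangle=\frac{1}{2}(\|u\|^2-\|v\|^2+\|u-v\|^2)$ to obtain $D_t\mathcal{E}_h^k\le\langle G^k,D_te^k\rangle$, and the same passage to $D_t\sqrt{\mathcal{E}_h^k}$ via \eqref{eq-DtId-1}--\eqref{eq-DtId-2}. The only cosmetic difference is that the paper does not bundle the residuals into a single $G^k$: it writes $\ddot p(t_k)=D_{tt}p(t_k)-\gamma^k$ with $\gamma^k=\frac{1}{\Delta t^2}(r_1(t_k)-r_1(t_{k-1}))+\frac{1}{\Delta t}r_2(t_k)$ and then, after applying \eqref{eq-est-H}, tracks separately the cross terms $\frac{h^\theta}{\Delta t^2}\|r_1(t_k)-r_1(t_{k-1})\|_\frac{1}{2}$, $\frac{h^\theta}{\Delta t}\|r_i(t_k)\|_\frac{1}{2}$ arising from $\|D_{tt}p(t_k)\|_\frac{1}{2}$ and $\|D_tp(t_k)\|_\frac{1}{2}$, bounding each piece individually (their estimates (3.26)--(3.30)); your ``up to higher order terms'' sweeps these under the rug but they are indeed harmless.
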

\begin{proof}
Denote by $r_1(t_k)$ the residual term in the first order Taylor expansion of $p$ around $t_{k-1}$. Then
\begin{equation}\label{eq-taylor-wave}
\dot{p}(t_k) = \frac{p(t_k)-p(t_{k-1})}{\Delta t} - \frac{1}{\Delta t}r_1(t_k) = D_tp(t_k) - \frac{1}{\Delta t}r_1(t_k),
\end{equation}
We have 
$$
\begin{array}{rcl}
\dsp \| \pi_h \dot p(t_k)- D_t p_h^k \| &\leq & \dsp \| \pi_h \dot p(t_k)- \pi_h  D_t p(t_k) \| +\| D_t  (\pi_h p(t_k) - p_h^k)\| \\
 & \leq & \dsp   \frac{1}{\Delta t}\|r_1(t_k)\| + \| D_t  (\pi_h p(t_k) - p_h^k) \|
\end{array}
$$
Therefore, the error we need to bound satisfies 
\begin{equation}
\label{eq-ErreurEnergie-wave}
\| \pi_hp(t_k)-p_h^k \|_\frac{1}{2} + \| \pi_h \dot p(t_k)- D_t p_h^k \| \leq
 2 \sqrt{\mathcal{E}_{h}^k} + \frac{1}{\Delta t}\|r_1(t_k)\|
\end{equation}
where we have set for all $1\le k \le K$
$$
\mathcal{E}_{h}^k = \frac{1}{2}\left\{ \left\|D_t \left(\pi_h p(t_k)- p_h^k\right) \right\|^2 + \left\|\pi_h p(t_k)-p_{h}^k\right\|_\frac{1}{2}^2\right\}.
$$
On the other hand, if $r_2(t_k)$ denote the residual term first order the Taylor expansion of $\dot p$ around $t_{k-1}$, then
$$
\begin{array}{rcl}
\dsp\ddot{q}(t_k) &=& \dsp\frac{\dot p(t_k) - \dot p(t_{k-1})}{\Delta t} - \frac{1}{\Delta t}r_2(t_k), \\
&=& \dsp\frac{p(t_k) - 2 p(t_{k-1}) + p(t_{k-2})}{\Delta t^2} - \frac{1}{\Delta t^2}\left(r_1(t_{k})-r_1(t_{k-1})\right) - \frac{1}{\Delta t}r_2(t_k),
\end{array}
$$
yielding
\begin{equation}
\label{eq-taylor-dot-wave}
\ddot{p}(t_k) = D_{tt} p(t_k) - \gamma^k, 
\end{equation}
where $$\gamma^k = \frac{1}{\Delta t^2}\left(r_1(t_k)-r_1(t_{k-1})\right) + \frac{1}{\Delta t}r_2(t_k).$$
Using \eqref{eq-taylor-wave} and \eqref{eq-taylor-dot-wave}, the variational formulation \eqref{eq-general-q-FV-wave} written for $t=t_k$ and for an arbitrary test function $\varphi = \varphi_h\in H_h$ takes the form
$$
\left\langle D_{tt} p(t_k)-\gamma^k,\varphi_h \right\rangle + \left\langle \pi_h p(t_k),\varphi_h \right\rangle_\frac{1}{2} 
+ \Big\langle C_0^*C_0 \Big(D_t p(t_k) -  \frac{1}{\Delta t}r_1(t_k)\Big),\varphi_h \Big\rangle = \left\langle f(t_k),\varphi_h \right\rangle.
$$
Subtracting \eqref{eq-general-qhk-FVDF-wave} from the above relation implies that
for all $2\le k \le K$ and all $\varphi_h\in H_h$:
\begin{multline*}
\left\langle D_{tt} ( p(t_k)-p_h^k),\varphi_h \right\rangle + \left\langle \pi_h p(t_k) -p_h^k,\varphi_h \right\rangle_\frac{1}{2} = - \left\langle C_0^*C_0 D_t \left(p(t_k) - p_h^k\right),\varphi_h \right\rangle \\
+ \left\langle \gamma^k,\varphi_h \right\rangle 
+\frac{1}{\Delta t}  \left\langle C_0^*C_0 r_1(t_k),\varphi_h \right\rangle
+ \left\langle f(t_k)-f_h^k,\varphi_h \right\rangle.
\end{multline*}
From the above relation, we get that
\begin{multline}\label{eq-tech-pihq-qhk-wave}
\left\langle D_{tt} \left(\pi_h p(t_k)-p_h^k\right),\varphi_h \right\rangle + \left\langle \pi_h p(t_k) -p_h^k,\varphi_h \right\rangle_\frac{1}{2} = \left\langle D_{tt}\left(\pi_hp(t_k)-p(t_k)\right),\varphi_h \right\rangle \\
- \left\langle C_0^*C_0 D_t \left(p(t_k) - p_h^k\right),\varphi_h \right\rangle 
+ \left\langle \gamma^k,\varphi_h \right\rangle 
+\frac{1}{\Delta t}  \left\langle C_0^*C_0 r_1(t_k),\varphi_h \right\rangle
+ \left\langle f(t_k)-f_h^k,\varphi_h \right\rangle.
\end{multline}
Using the identity 
$$
\frac{1}{2} \left( \|u\|^2 - \|v\|^2 + \|u-v\|^2 \right) = \mbox{Re} \, \left\langle u-v,u \right\rangle, \quad \forall u,v \in H,
$$
one easily obtains that for all $2\le k \le K$
\begin{multline*}
D_t\mathcal{E}_{h}^k \leq \left\langle D_{tt}\left(\pi_h p(t_k)-p_{h}^k\right),D_t\left(\pi_h p(t_k)-p_h^k\right) \right\rangle \\
+ \left\langle \pi_h p(t_k)-p_{h}^k,D_t\left(\pi_h p(t_k)-p_h^k\right) \right\rangle_\frac{1}{2}.
\end{multline*}
Taking $\varphi_h = D_t\left(\pi_h p(t_k)-p_h^k\right)$ in \eqref{eq-tech-pihq-qhk-wave} 
and substituting in the above inequality and using the boundedness of $C_0$, we obtain the existence of $M>0$ such that for all $2\le k \le K$
\begin{multline}\label{eq-intermediate-full-1-wave}
D_t\mathcal{E}_{h}^k \leq M\bigg[\left\| D_{tt}\left(\pi_hp(t_k)-p(t_k)\right) \right\| + \left\| D_t \left( \pi_h p(t_k) - p(t_k)\right) \right\| + \| \gamma^k\| \\
+ \frac{1}{\Delta t}\| r_1(t_k)\|+\| f(t_k)-f_h^k\|\bigg]\left\| D_t(\pi_hp(t_k)-p_h^k) \right\|.
\end{multline}
Using relations \eqref{eq-DtId-1} and \eqref{eq-DtId-2}, 
we obtain from \eqref{eq-est-H} and \eqref{eq-intermediate-full-1-wave} that for all $h\in(0,h^*)$ 
$$
D_t\sqrt{\mathcal{E}_{h}^k} \leq M \bigg\{ h^\theta \left( \|D_{tt}p(t_k)\|_\frac{1}{2} + \|D_t p(t_k)\|_\frac{1}{2} \right) + \|\gamma^k\| + \frac{1}{\Delta t}\| r_1(t_k)\|+\|f(t_k)-f_h^k\| \bigg\}.
$$
By \eqref{eq-taylor-wave}, \eqref{eq-taylor-dot-wave} and relations \eqref{eq-q-norm-1-2} and \eqref{eq-dotq-norm-1} in Lemma \ref{lem-annexe2} of the Appendix for the first order formulation of \eqref{eq-general-q-wave}, the last estimate yields
\begin{multline}\label{eq-intermediate-full-2-wave}
D_t\sqrt{\mathcal{E}_{h}^k} \leq M \bigg\{h^\theta \left( \|p_0\|_\frac{3}{2} + \|p_1\|_1 + t_k\|f\|_{1,\infty} + \|f\|_{\frac{1}{2},\infty} \right) +\|f(t_k)-f_h^k\| \\
+ \frac{h^\theta}{\Delta t^2}\|r_1(t_k)-r_1(t_{k-1})\|_\frac{1}{2} + \frac{h^\theta}{\Delta t}\left(\|r_1(t_k)\|_\frac{1}{2} + \|r_2(t_k)\|_\frac{1}{2}\right) \\
+ \frac{1}{\Delta t^2}\|r_1(t_k)-r_1(t_{k-1})\| + \frac{1}{\Delta t} \Big(\|r_1(t_k)\| + \|r_2(t_k)\| \Big) \bigg\}.
\end{multline}
To conclude, it remains to bound the terms including the residuals $r_1$ and $r_2$ in the above estimate. By definition of $r_2$, we have
$$
r_2(t_k) = \dot p(t_k)-\dot p(t_{k-1}) - \Delta t \,\ddot{p}(t_k),
$$
in $\Dzdemi$, and thus by the mean value theorem, we get
$$
\|r_2(t_k)\|_\frac{1}{2} \leq \Delta t \sup_{s\in\left[t_{k-1},t_k\right]}\|\ddot{p}(s)\|_\frac{1}{2} + \Delta t\|\ddot{p}(t_k)\|_\frac{1}{2}.
$$
Using once again \eqref{eq-dotq-norm-1}, we obtain that there exists $M>0$ such that
\begin{equation}\label{eq-intermediate-full-3-wave}
\|r_2(t_k)\|_\frac{1}{2} \leq M \Delta t \left( \|p_0\|_\frac{3}{2} + \|p_1\|_1 + t_k\|f\|_{1,\infty} + \|f\|_{\frac{1}{2},\infty} \right).
\end{equation}
Now by the regularity of $p$ (see Lemma \ref{lem-annexe2} applied to the first order formulation of \eqref{eq-general-q-wave}), the residual $r_2$ can be expressed via the integral 
$$
r_2(t_k) = \int_{t_{k-1}}^{t_k} \frac{d^3p}{ds^3}(s)\left(t_{k-1}-s\right)ds,
$$
in $H$, and thus 
$$
\|r_2(t_k)\| \leq \Delta t^2 \sup_{s\in\left[t_{k-1},t_k\right]}\left\| \frac{d^3p}{ds^3}(s)\right\|.
$$
Using equation \eqref{eq-general-q-wave} verified by $p$ and the boundedness of $C_0$, we have
\begin{equation}\label{eq-intermediate-full-8-wave}
\begin{array}{lll}
\left\| \dfrac{d^3p}{dt^3}(t)\right\| = \Big\|\dfrac{d\ddot{p}}{dt}(t) \Big\|&= \Big\|\dfrac{d}{dt}\Big\{ -A_0 p(t) -C_0^*C_0\dot p(t) + f(t) \Big\} \Big\|, \\
&\leq \|\dot{p}(t)\|_1 + M \|\ddot{p}(t)\| + \|\dot f(t)\|.
\end{array}
\end{equation}
Hence, once again by \eqref{eq-dotq-norm-1}, we get 
\begin{equation}\label{eq-intermediate-full-4-wave}
\|r_2(t_k)\| \leq M \Delta t^2 \left( \|p_0\|_\frac{3}{2} + \|p_1\|_1 + t_k\|f\|_{1,\infty} + \|f\|_{\frac{1}{2},\infty} + \|\dot f\|_\infty\right).
\end{equation}
For the term implying $r_1$, we note that
$$
r_1(t_k) = \int_{t_{k-1}}^{t_k} \ddot p(s) (t_{k-1}-s) ds,
$$
in $\Dzdemi$. Hence, by a similar argument and \eqref{eq-dotq-norm-1},
\begin{equation}\label{eq-intermediate-full-5-wave}
\|r_1(t_k)\| \leq M \|r_1(t_k)\|_\frac{1}{2} \leq M \Delta t^2 \left( \|p_0\|_\frac{3}{2} + \|p_1\|_1 + t_k\|f\|_{1,\infty} + \|f\|_{\frac{1}{2},\infty} \right).
\end{equation}

Then, we write in $\Dzdemi$
$$
r_1(t_k)-r_1(t_{k-1}) = \int_{t_{k-2}}^{t_{k-1}} \left(\ddot p(s-\Delta t)-\ddot p(s)\right)\left(t_{k-2}-s\right)ds.
$$
Using the above relation, it comes by using once again \eqref{eq-dotq-norm-1}
\begin{eqnarray}
\|r_1(t_k)-r_1(t_{k-1})\|_\frac{1}{2} &\leq& M \Delta t^2 \sup_{s \in (t_{k-2},t_{k-1})} \|\ddot p(s) \|_\frac{1}{2}, \nonumber \\
&\leq& M \Delta t^2 \left( \|p_0\|_\frac{3}{2} + \|p_1\|_1 + t_{k-1}\|f\|_{1,\infty} + \|f\|_{\frac{1}{2},\infty} \right).\label{eq-intermediate-full-6-wave}
\end{eqnarray}
Finally 
\begin{eqnarray*}
\|r_1(t_k)-r_1(t_{k-1})\| &\leq& \Delta t \int_{t_{k-2}}^{t_{k-1}} \int_{s-\Delta t}^s \left\|\dfrac{d^3p}{d\sigma^3}(\sigma) \right\| d\sigma \, ds, \\
&\leq& M \Delta t^3 \sup_{s \in (t_{k-3},t_{k-1})} \left\|\dfrac{d^3p}{ds^3}(s) \right\| .
\end{eqnarray*}
Using \eqref{eq-intermediate-full-8-wave} and \eqref{eq-dotq-norm-1}, we get
\begin{equation}\label{eq-intermediate-full-7-wave}
\|r_1(t_k)-r_1(t_{k-1})\| \leq  M \Delta t^3 \left( \|p_0\|_\frac{3}{2} + \|p_1\|_1 + t_{k-1}\|f\|_{1,\infty} + \|f\|_{\frac{1}{2},\infty} + \|\dot f\|_\infty\right).
\end{equation}
Substituting \eqref{eq-intermediate-full-3-wave}, \eqref{eq-intermediate-full-4-wave}, \eqref{eq-intermediate-full-5-wave}, \eqref{eq-intermediate-full-6-wave} and \eqref{eq-intermediate-full-7-wave} in relation \eqref{eq-intermediate-full-2-wave} provides estimates for $D_t\sqrt{\mathcal{E}_{h}^k}=\dfrac{\sqrt{\mathcal{E}_{h}^k}-\sqrt{\mathcal{E}_{h}^{k-1}}}{\Delta t}$, for $k=1,\dots,K$. By adding all these inequalities, we immediately get an upper bound for $\sqrt{\mathcal{E}_{h}^k}$, and thus the desired inequality thanks to \eqref{eq-ErreurEnergie-wave} and \eqref{eq-intermediate-full-5-wave}.
\end{proof}
Using this Proposition, we can derive an error estimate for the semigroup $\T^\pm_{t_k}$ (for all $0\le k\le K$) and for the operator $\Lb_\tau = \T^-_\tau\T^+_\tau$ (the counterpart of Proposition \ref{prop-L-tau-h-wave}).
\begin{proposition}\label{prop-L-tau-hk-wave}
Let $\Pi_h = \left[\begin{matrix}\pi_h & 0\\0 & \pi_h\end{matrix}\right]$. Under the assumptions of Proposition \ref{prop-pihq-qhk-wave}, the following assertions hold true
\begin{enumerate}
\item There exist $M>0$, $h^*>0$ and $\Delta t^*>0$ such that for all $h\in(0,h^*)$, all $\Delta t\in(0,\Delta t^*)$ and all $0\le k\le K$
\begin{equation}\label{eq-cor-pihq-qhk-forward-wave}
\left\|(\Pi_h\T^+_{t_k}  - \T^+_{h,\Delta t, k}) \Pz \right\| \leq M t_k(h^\theta + \Delta t) \left( \|p_0\|_\frac{3}{2} + \|p_1\|_1 \right).
\end{equation}
\begin{equation}\label{eq-cor-pihq-qhk-backward-wave}
\left\|(\Pi_h\T^-_{t_k}  - \T^-_{h,\Delta t, k}) \Pz \right\| \leq M (\tau-t_k)(h^\theta + \Delta t) \left( \|p_0\|_\frac{3}{2} + \|p_1\|_1 \right).
\end{equation}
\item There exist $M>0$, $h^*>0$ and $\Delta t^*>0$ such that for all $n\in\mathbb{N}$, all $h\in(0,h^*)$, all $\Delta t\in(0,\Delta t^*)$ and all $0\le k\le K$
\begin{equation}\label{eq-cor-pihq-qhk-wave2}
\left\| (\Lb_{t_k}^n - \Lb_{h,\Delta t,k}^n) \Pz \right\| \leq M \left[ h^\theta + n \tau(h^\theta+\Delta t) \right] \left( \|p_0\|_\frac{3}{2} + \|p_1\|_1 \right).
\end{equation}

\end{enumerate}
\end{proposition}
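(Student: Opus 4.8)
The plan is to obtain both assertions exactly as in the semi-discrete wave case (Proposition~\ref{prop-L-tau-h-wave}) and the fully discrete Schr\"odinger case (Proposition~\ref{prop-L-tau-hk-sch}), taking Proposition~\ref{prop-pihq-qhk-wave} as the basic building block. For assertion~1 I would simply unfold the definitions \eqref{eq-Tpm-wave} and \eqref{eq-Tpm-h-Deltat-wave}: the vector $(\Pi_h\T^+_{t_k}-\T^+_{h,\Delta t,k})\Pz$ has first component $\pi_hp(t_k)-p^k_h$ and second component $\pi_h\dot p(t_k)-D_tp^k_h$, so by the definition of the norm on $X$ one has $\|(\Pi_h\T^+_{t_k}-\T^+_{h,\Delta t,k})\Pz\|\le\|\pi_hp(t_k)-p^k_h\|_\frac{1}{2}+\|\pi_h\dot p(t_k)-D_tp^k_h\|$, which is precisely the quantity bounded by Proposition~\ref{prop-pihq-qhk-wave}. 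Applying that proposition with $f=f^k_h=0$ and $(p_{0,h,\Delta t},p_{1,h,\Delta t})=(\pi_hp_0,\pi_hp_1)$ kills the data and source terms and gives \eqref{eq-cor-pihq-qhk-forward-wave}. The backward estimate \eqref{eq-cor-pihq-qhk-backward-wave} follows identically from the time reversal built into \eqref{eq-Tpm-h-Deltat-wave}: the relevant index is $K-k$, and $t_{K-k}=\tau-t_k$ produces the factor $\tau-t_k$.

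For assertion~2 I would argue as in \eqref{eq-Lt-ineq-tri-wave}--\eqref{eq-Lt-second-term-wave}. First split $\|(\Lb^n_{t_k}-\Lb^n_{h,\Delta t,k})\Pz\|\le\|(\Lb^n_{t_k}-\Pi_h\Lb^n_{t_k})\Pz\|+\|(\Pi_h\Lb^n_{t_k}-\Lb^n_{h,\Delta t,k})\Pz\|$. The first term is bounded by $Mh^\theta(\|p_0\|_\frac{3}{2}+\|p_1\|_1)$ using \eqref{eq-est-H} applied componentwise together with $\|\Lb_t\|_{\mathcal{L}(\D)}\le1$ from Lemma~\ref{lem-contraction}. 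The second term I would treat by induction on $n$, proving $\|(\Pi_h\Lb^n_{t_k}-\Lb^n_{h,\Delta t,k})\Pz\|\le Mn\tau(h^\theta+\Delta t)(\|p_0\|_\frac{3}{2}+\|p_1\|_1)$. For the base case $n=1$ I would use the decomposition $\Pi_h\T^-_{t_k}\T^+_{t_k}-\T^-_{h,\Delta t,k}\T^+_{h,\Delta t,k}=(\Pi_h\T^-_{t_k}-\T^-_{h,\Delta t,k})\T^+_{t_k}+\T^-_{h,\Delta t,k}(\T^+_{t_k}-\T^+_{h,\Delta t,k})$, bounding the first summand by \eqref{eq-cor-pihq-qhk-backward-wave} combined with $\|\T^+_t\|_{\mathcal{L}(\D)}\le1$ (Lemma~\ref{lem-contraction}), and the second by inserting $\Pi_h\T^+_{t_k}$ and invoking the uniform boundedness of $\T^-_{h,\Delta t,k}$ on $X$, \eqref{eq-est-H} and \eqref{eq-cor-pihq-qhk-forward-wave}; this yields the bound $M\tau(h^\theta+\Delta t)(\|p_0\|_\frac{3}{2}+\|p_1\|_1)$.

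The inductive step splits $\Pi_h\Lb^n_{t_k}-\Lb^n_{h,\Delta t,k}$ as $\big(\Pi_h\Lb_{t_k}(\Lb^{n-1}_{t_k}\Pz)-\Lb_{h,\Delta t,k}\Pi_h(\Lb^{n-1}_{t_k}\Pz)\big)+\Lb_{h,\Delta t,k}\big(\Pi_h\Lb^{n-1}_{t_k}\Pz-\Lb^{n-1}_{h,\Delta t,k}\Pz\big)$, which is legitimate because $\T^+_{h,\Delta t,k}$ only sees $\Pi_h$ of its argument, so $\Lb_{h,\Delta t,k}v=\Lb_{h,\Delta t,k}\Pi_hv$. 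To the first summand I apply the $n=1$ bound just established to the vector $\Lb^{n-1}_{t_k}\Pz$, whose $\D$-norm is controlled by Lemma~\ref{lem-contraction}; to the second I apply uniform boundedness of $\Lb_{h,\Delta t,k}$ on $X$ and the induction hypothesis. Adding the two gives $Mn\tau(h^\theta+\Delta t)(\|p_0\|_\frac{3}{2}+\|p_1\|_1)$, and combining with the projection term of the triangle inequality yields \eqref{eq-cor-pihq-qhk-wave2}.

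The step I expect to require the most care is the uniform boundedness of $\|\T^\pm_{h,\Delta t,k}\|_{\mathcal{L}(X)}$ with respect to \emph{both} $h$ and $\Delta t$, on which the whole induction rests. Unlike the semi-discrete case, where contractivity of the discrete flow on $X$ is immediate, here the second-order-in-time scheme \eqref{eq-general-qhk-FVDF-wave} must be shown to remain stable, which is why the threshold $\Delta t^*$ appears. The right way to secure this is to extract a discrete energy bound from the estimates already contained in the proof of Proposition~\ref{prop-pihq-qhk-wave} (specialised to $f=0$ and matched initial data), so that $\sqrt{\mathcal{E}^k_h}$, and hence $\|\T^\pm_{h,\Delta t,k}\Pz\|$, stays controlled uniformly for $\Delta t$ small enough. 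Once this boundedness is granted, the remaining manipulations are the routine triangle-inequality and induction bookkeeping already performed in the Schr\"odinger and semi-discrete wave proofs.
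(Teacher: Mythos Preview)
Your proposal is correct and follows essentially the same route as the paper: assertion~1 is obtained directly from Proposition~\ref{prop-pihq-qhk-wave} via the definitions \eqref{eq-Tpm-wave} and \eqref{eq-Tpm-h-Deltat-wave}, and assertion~2 is proved by the same triangle-inequality split and the same induction, with the same decompositions for the base case and the inductive step (the paper inserts $\Pi_h\T^+_{t_k}$ rather than $\T^+_{t_k}$ in the base case, but this is cosmetic since $\T^-_{h,\Delta t,k}$ absorbs $\Pi_h$). Your final paragraph on the uniform boundedness of $\|\T^\pm_{h,\Delta t,k}\|_{\mathcal{L}(X)}$ is in fact more scrupulous than the paper, which simply declares it ``obvious''; your suggested justification via the discrete energy estimate underlying Proposition~\ref{prop-pihq-qhk-wave} is the natural one.
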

\begin{proof}\ 

1. From relations \eqref{eq-Tpm-h-wave} and \eqref{eq-Tpm-h-Deltat-wave} defining the continuous and the fully discretized operators $\T^\pm_{t_k}$ and $\T^\pm_{h,\Delta t, k}$, \eqref{eq-cor-pihq-qhk-forward-wave} and \eqref{eq-cor-pihq-qhk-backward-wave}  follow immediately from Proposition \ref{prop-pihq-qhk-wave}. 


2. First, we note that 
\begin{equation}\label{eq-Lt-ineq-tri-full-wave}
\left\| (\Lb^n_{t_k} - \Lb^n_{h,\Delta t,k}) \Pz \right\| \leq \left\| (\Lb^n_{t_k} - \Pi_h\Lb^n_{t_k})\Pz \right\| + \left\| (\Pi_h\Lb^n_{t_k}  - \Lb^n_{h,\Delta t, k} )\Pz \right\|.
\end{equation}
Using \eqref{eq-est-H}, the fact that $\|\Lb^n_t\|_{\mathcal{L}({\cal D}(A))} \leq 1$ (proved in Lemma \ref{lem-contraction} of the Appendix), the first term in the above relation can be estimated as follows 
\begin{equation}\label{eq-Lt-first-term-full-wave}
\left\| (\Lb^n_{t_k} - \Pi_h\Lb^n_{t_k})\Pz \right\| \leq Mh^\theta\left( \|p_0\|_\frac{3}{2} + \|p_1\|_1 \right), \quad \forall h \in (0,h^*).
\end{equation}
For the second term in \eqref{eq-Lt-ineq-tri-full-wave}, we prove by induction that for all $n\in\mathbb{N}$, all $h \in (0,h^*)$ and all $\Delta t\in(0,\Delta t^*)$ (for some $\Delta t^*>0$)
\begin{equation}\label{eq-Lt-second-term-full-wave}
\left\| (\Pi_h\Lb^n_{t_k} - \Lb^n_{h,\Delta t, k}) \Pz \right\| \leq M n t_k(h^\theta + \Delta t) \left( \|p_0\|_\frac{3}{2} + \|p_1\|_1 \right).
\end{equation}
By definition, we have 
$$
\begin{array}{lll}
\left\| (\Pi_h \Lb_{t_k} - \Lb_{h,\Delta t, k} )\Pz \right\| &= \left\| (\Pi_h \T^-_{t_k} \T^+_{t_k} - \T_{h,\Delta t, k}^- \T_{h,\Delta t, k}^+) \Pz \right\|, \\
&\leq \left\|(\Pi_h \T^-_{t_k} - \T_{h,\Delta t, k}^-) \Pi_h \T_{t_k}^+ \Pz \right\|\\
&\hspace{1cm} + \left\| \T_{h,\Delta t, k}^-( \Pi_h \T_{t_k}^+ - \T_{h,\Delta t, k}^+) \Pz \right\|.
\end{array}
$$
Using \eqref{eq-cor-pihq-qhk-backward-wave} and Lemma \ref{lem-contraction}, we get 
$$
\left\|( \Pi_h \T^-_{t_k} - \T_{h,\Delta t, k}^-) \Pi_h \T_{t_k}^+ \Pz \right\| \leq M (\tau-t_k)( h^\theta + \Delta t ) \left( \|p_0\|_\frac{3}{2} + \|p_1\|_1 \right).
$$
Obviously $\| \T^-_{h,\Delta t,k} \|_{\mathcal{L}(X)}$ is uniformly bounded (with respect to $h$ and $\Delta t$), and thus again by \eqref{eq-cor-pihq-qhk-forward-wave} we have
$$
\left\| \T_{h,\Delta t,k}^-( \Pi_h \T_{t_k}^+ - \T_{h,\Delta t,k}^+) \Pz \right\| \leq M t_k( h^\theta + \Delta t) \left( \|p_0\|_\frac{3}{2} + \|p_1\|_1 \right).
$$
So
\begin{equation}\label{eq-pih-Lt-full-wave}
\left\| (\Pi_h\Lb_{t_k} - \Lb_{h,\Delta t,k}) \Pz \right\| \leq M \tau ( h^\theta + \Delta t) \left( \|p_0\|_\frac{3}{2} + \|p_1\|_1 \right),
\end{equation}
showing that \eqref{eq-Lt-second-term-full-wave} holds for $n=1$. Suppose now that for some $n\geq 2$ 
\begin{equation}\label{eq-pih-Lt-rec-full-wave}
\left\| (\Pi_h\Lb^{n-1}_{t_k} - \Lb^{n-1}_{h,\Delta t,k} ) \Pz \right\| \leq M (n-1) \tau( h^\theta + \Delta t) \left( \|p_0\|_\frac{3}{2} + \|p_1\|_1 \right).
\end{equation}
Writing 
\begin{multline*}
\left\| (\Pi_h\Lb^n_{t_k} - \Lb^n_{h,\Delta t,k}) \Pz \right\| \leq \left\| (\Pi_h\Lb_{t_k}\Lb^{n-1}_{t_k} - \Lb_{h,\Delta t,k}\Pi_h\Lb^{n-1}_{t_k}) \Pz \right\|\\
+ \left\| \Lb_{h,\Delta t,k}(\Pi_h\Lb^{n-1}_{t_k} p_0 - \Lb^{n-1}_{h,\Delta t,k} p_0) \right\|,
\end{multline*}
we get by using Lemma \ref{lem-contraction}, the uniform boundedness of $\|\Lb_{h,\Delta t,k}\|_{\mathcal{L}(X)}$ with respect to $h$ and $\Delta t$, \eqref{eq-pih-Lt-full-wave} and \eqref{eq-pih-Lt-rec-full-wave} that
$$
\left\| (\Pi_h\Lb^n_{t_k} - \Lb^n_{h,\Delta t,k} )\Pz \right\| \leq M \left[(1+(n-1)) \tau( h^\theta + \Delta t) \right]\left( \|p_0\|_\frac{3}{2} + \|p_1\|_1 \right),
$$
which is exactly \eqref{eq-Lt-second-term-full-wave}. Substituting \eqref{eq-Lt-first-term-full-wave} and \eqref{eq-Lt-second-term-full-wave} in \eqref{eq-Lt-ineq-tri-full-wave}, we obtain the result.
\end{proof}
We are now able to prove Theorem \ref{th-main-full-wave}.
\begin{proof}[of Theorem \ref{th-main-full-wave}]
Introducing the term $\dsp \sum_{n=0}^{N_{h,\Delta t}} \Lb_{h,\Delta t,K}^n \left[\begin{matrix}w^-(0) \\ \dot w^-(0)\end{matrix}\right] $, we can rewrite $\left[\begin{matrix}w_0 \\ w_1\end{matrix}\right] - \left[\begin{matrix}w_{0,h,\Delta t} \\ w_{1,h,\Delta t}\end{matrix}\right]$ in the following form
$$
\begin{array}{lll}
\left[\begin{matrix}w_0 \\ w_1\end{matrix}\right] - \left[\begin{matrix}w_{0,h,\Delta t} \\ w_{1,h,\Delta t}\end{matrix}\right] &= \dsp \sum_{n=0}^\infty \Lb_\tau^n \left[\begin{matrix}w^-(0) \\ \dot w^-(0)\end{matrix}\right] -\sum_{n=0}^{N_{h,\Delta t}} \Lb_{h,\Delta t,K}^n \left[\begin{matrix}(w_h^-)^0 \\ D_t (w_h^-)^1\end{matrix}\right],\\
&= \dsp \sum_{n>N_{h,\Delta t}} \Lb_\tau^n \left[\begin{matrix}w^-(0) \\ \dot w^-(0)\end{matrix}\right] + \sum_{n=0}^{N_{h,\Delta t}}\left(\Lb_\tau^n - \Lb_{h,\Delta t,K}^n\right) \left[\begin{matrix}w^-(0) \\ \dot w^-(0)\end{matrix}\right] \\
& \dsp \hspace{3cm} + \sum_{n=0}^{N_{h,\Delta t}} \Lb_{h,\Delta t,K}^n \left(\left[\begin{matrix}w^-(0)-(w_h^-)^0 \\ \dot w^-(0) - D_t (w_h^-)^1\end{matrix}\right]\right).
\end{array} 
$$
Therefore, we have 
\begin{equation}\label{eq-diff-z0-z0hk-wave}
\| w_0 - w_{0,h,\Delta t} \|_\frac{1}{2} + \| w_1 - w_{1,h,\Delta t} \| \leq S_1+S_2+S_3,
\end{equation}
where we have set 
$$
\left\{
\begin{array}{lll}
\dsp S_1= \sum_{n>N_{h,\Delta t}} \left\| \Lb_\tau^n \left[\begin{matrix}w^-(0) \\ \dot w^-(0)\end{matrix}\right] \right\|,\\
\dsp S_2=\sum_{n=0}^{N_{h,\Delta t}} \left\| \left(\Lb_\tau^n - \Lb_{h,\Delta t,K}^n\right) \left[\begin{matrix}w^-(0) \\ \dot w^-(0)\end{matrix}\right] \right\|,\\
\dsp S_3=\left(\sum_{n=0}^{N_{h,\Delta t,K}} \left\| \Lb_{h,\Delta t,K}^n \right\|_{\mathcal{L}(X)}\right) \left\|\left[\begin{matrix}w^-(0)-(w_h^-)^0 \\ \dot w^-(0) - D_t (w_h^-)^1\end{matrix}\right]\right\|.
\end{array} 
\right.
$$
Note that the term $S_1$ is the truncation error of the tail of the infinite sum \eqref{eq-z0-neumann}, the term $S_2$ represents the cumulated error due to the approximation of the semigroups $\T^\pm$ while the term $S_3$ comes from the approximation of the first iterate $\left[\begin{matrix}w^-(0) \\ \dot w^-(0)\end{matrix}\right]$ of the algorithm. 

Since $\eta=\|\Lb_\tau\|_{\mathcal{L}(X)} < 1$, using relation \eqref{eq-z0-Ltau}, the first term can be estimated very easily 
\begin{equation}\label{eq-first-term-th-full-wave}
S_1\leq M\, \frac{\eta^{N_{h+\Delta t}+1}}{1-\eta} \left( \| w_0 \|_\frac{3}{2} + \| w_1 \|_1 \right).
\end{equation}
The term $S_2$ can be estimated using the estimate \eqref{eq-cor-pihq-qhk-wave2} from Proposition \ref{prop-L-tau-hk-wave} : for all $h \in (0,h^*)$ and all $\Delta t\in (0,\Delta t^*)$
$$
S_2 \leq M \left(\sum_{n=0}^{N_{h,\Delta t}} (1+n \tau)\right) \left(h^\theta + \Delta t\right) \left( \|w^-(0)\|_\frac{3}{2} + \|\dot w^-(0)\|_1 \right),
$$
Therefore, using \eqref{eq-z0-Ltau} and the fact that $\|\Lb_\tau\|_{\Ds}<1$ (see Lemma \ref{lem-contraction}) in the above relation, we finally get that for all $h \in (0,h^*)$ and all $\Delta t\in (0,\Delta t^*)$
\begin{equation}\label{eq-second-term-th-full-wave}
S_2 \leq M\Big[1+(1+\tau)N_{h,\Delta t}+N_{h,\Delta t}^2\tau \Big] \left( h^\theta + \Delta t \right) \left( \| w_0 \|_\frac{3}{2} + \| w_1 \|_1 \right),
\end{equation}
It remains to estimate the term $S_3$. By the uniform boundedness of $\|\Lb_{h,\Delta t, K}\|_{\mathcal{L}(X)}$ by 1 with respect to $h$ and $\Delta t^*$, we have 
\begin{equation}\label{eq-S3-num1-wave-full}
\begin{array}{rcl}
S_3 &\leq & \dsp M N_{h,\Delta t} \left(\| w^-(0) - (w_h^-)^0 \|_\frac{1}{2} + \| \dot w^-(0) - D_t (w_h^-)^1 \| \right)\\
 & \leq &MN_{h,\Delta t} \dsp\Big(\left\| w^-(0) - \pi_h w^-(0)\right\|_\frac{1}{2} + \left\|\pi_h w^-(0)- (w_h^-)^0\right\|_\frac{1}{2} \\
& &\hspace{2.5cm} + \left\| \dot w^-(0) - \pi_h \dot w^-(0)\right\| + \left\|\pi_h \dot w^-(0)- D_t (w_h^-)^1\right\| \Big).
\end{array}
\end{equation}
By using \eqref{eq-est-H} and \eqref{eq-z0-Ltau}, we immediately obtain that
\begin{equation}\label{eq-S3-num2-wave-full}
\| w^-(0) - \pi_h w^-(0) \|_\frac{1}{2} + \| \dot w^-(0) - \pi_h \dot w^-(0) \| \leq M\left(h^\theta +\Delta t\right)\left( \| w_0 \|_\frac{3}{2} + \| w_1 \|_1 \right).
\end{equation}
To estimate $\| \pi_h w^-(0)- (w_h^-)^0 \|_\frac{1}{2} + \| \pi_h \dot w^-(0) - D_t (w_h^-)^1 \|$, we apply twice Proposition \ref{prop-pihq-qhk-wave} first for the time reversed backward observer $w^-(\tau-\cdot)$ and then for the forward observer $w^+$ (the time reversal is introduced because Proposition \ref{prop-pihq-qhk-wave}, as it is formulated, concerns initial value Cauchy problems). After straightforward calculation we obtain that for all $h\in(0,h^*)$ and all $\Delta t \in (0,\Delta t^*)$
\begin{multline}\label{eq-intermediate-main-1-full-wave}
\| \pi_h w^-(0) - (w_h^-)^0 \|_\frac{1}{2} + \| \pi_h \dot w^-(0) - D_t (w_h^-)^1 \| \\
 \leq M \left(h^\theta+\Delta t\right)\Big[ \tau(\|w^+(\tau)\|_\frac{3}{2} + \| \dot w^+(\tau) \|_1 + \|C_0^*y\|_{\frac{1}{2}, \infty}) + \tau^2\|C_0^*y\|_{1,\infty} \Big]\\
+ \Delta t \sum_{\ell=1}^K \|C_0^*\left(y(\tau-\ell)-y_h^{K-\ell} \right)\| + \Delta t \sum_{\ell=1}^K \|C_0^*\left(y(t_\ell)-y_h^\ell \right)\|.
\end{multline}
Applying \eqref{eq-q-norm-1-2} of Lemma \ref{lem-annexe2} of the Appendix with zero initial data, we obtain that 
$$
\|w^+(\tau)\|_\frac{3}{2} + \|\dot w^+(\tau)\|_1 \leq \tau \|C_0^*y\|_{1,\infty}.
$$
Therefore \eqref{eq-intermediate-main-1-full-wave} also reads
\begin{multline*}
\| \pi_h w^-(0) - (w_h^-)^0 \|_\frac{1}{2} + \| \pi_h \dot w^-(0) - D_t (w_h^-)^1 \| \leq M \left( h^\theta + \Delta t \right) (\tau + \tau^2)\|C_0^*y\|_{1,\infty} \\
+ 2 \Delta t \sum_{\ell=0}^K\|C_0^*\left(y(t_\ell)-y_h^\ell\right)\|.
\end{multline*}
As $C_0^*C_0 \in \mathcal{L}\left(\Dztdemi\right) \cap \mathcal{L}\left(\Dzdemi\right)$ and $\|w\|_{\frac{3}{2},\infty} + \|\dot w\|_{1,\infty} = \|w_0\|_\frac{3}{2} + \|w_1\|_1$ (since $A$ is skew-adjoint), the last relation becomes 
\begin{multline*}
\| \pi_h w^-(0) - (w_h^-)^0 \|_\frac{1}{2} + \| \pi_h \dot w^-(0) - D_t (w_h^-)^1 \| \\
\leq M \left( h^\theta + \Delta t \right) (\tau + \tau^2)\left(\|w_0\|_\frac{3}{2} + \|w_1\|_1\right) + 2 \Delta t \sum_{\ell=0}^K\|C_0^*\left(y(t_\ell)-y_h^\ell\right)\|.
\end{multline*}
Substituting the above relation and \eqref{eq-S3-num2-wave-full} in \eqref{eq-S3-num1-wave-full}, we get
\begin{multline}\label{eq-third-term-th-full-wave}
S_3 \leq M N_{h,\Delta t} \bigg[ \left(h^\theta + \Delta t\right) (1+\tau + \tau^2)\left(\|w_0\|_\frac{3}{2} + \|w_1\|_1\right) \\
+ \Delta t \sum_{\ell=0}^K\|C_0^*\left(y(t_\ell)-y_h^\ell\right)\| \bigg].
\end{multline}
Substituting \eqref{eq-first-term-th-full-wave}, \eqref{eq-second-term-th-full-wave} and \eqref{eq-third-term-th-full-wave} in \eqref{eq-diff-z0-z0hk-wave}, we get for all $h\in(0,h^*)$ and all $\Delta t\in(0,\Delta t^*)$
\begin{multline*}
\| w_0 - w_{0,h,\Delta t} \|_\frac{1}{2} + \| w_1 - w_{1,h,\Delta t} \| \leq \\
M \Bigg[ \bigg(\frac{\eta^{N_{h,\Delta t}+1}}{1-\eta} + \left(h^\theta + \Delta t\right) \left[1+(1+\tau+\tau^2)N_h+\tau N_h^2 \right] \bigg)\left(\|w_0\|_\frac{3}{2} + \|w_1\|_1\right) \\
+ N_{h,\Delta t} \Delta t \sum_{\ell=0}^K\|C_0^*\left(y(t_\ell)-y_h^\ell\right)\| \Bigg],
\end{multline*}
which leads to the result (with possibly reducing the value of $h^*$ and $\Delta t^*$).
\end{proof}

\section*{Appendix}
\setcounter{equation}{0}
\setcounter{theorem}{0}
\setcounter{section}{4}
Let $A :\D \rightarrow X$ a skew-adjoint operator and $C \in \mathcal{L}(X,Y)$ such that $C^*C \in \mathcal{L}\left(\D\right)$. Assume that $A-C^*C$ generates a $C_0$-semigroup of contractions on $X$.
\begin{lemma}\label{lem-contraction}
The operator $A-C^*C$ generates a $C_0$-semigroup of contractions on $\D$ and $\Ds$.
\end{lemma}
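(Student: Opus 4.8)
The plan is to deduce both statements from the hypothesis that $B := A - C^*C$ already generates a contraction semigroup $\mathbb{T}^+$ on $X$, by \emph{restricting} $\mathbb{T}^+$ to the smaller spaces and exploiting the fact that a generator commutes with its own semigroup. First I would identify the domains. Since $C^*C \in \mathcal{L}(X)$ we have $\mathcal{D}(B) = \mathcal{D}(A)$; and since moreover $C^*C \in \mathcal{L}(\mathcal{D}(A))$, expanding $B^2 = A^2 - AC^*C - C^*C A + (C^*C)^2$ shows that the only unbounded contribution is $A^2$, the three cross terms being controlled by $\|x\|$, $\|Ax\|$ and $\|x\|_{\mathcal{D}(A)}$; hence $\mathcal{D}(B^2) = \mathcal{D}(A^2)$. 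The same bookkeeping shows that the graph norm $\|x\|^2 + \|Bx\|^2$ (resp. $\|x\|^2 + \|Bx\|^2 + \|B^2 x\|^2$) is equivalent to the usual graph norm of $A$ on $\mathcal{D}(A)$ (resp. of $A^2$ on $\mathcal{D}(A^2)$), so these norms define the same Banach spaces $\mathcal{D}(A)$ and $\mathcal{D}(A^2)$.

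The core of the argument is the elementary observation that for $x \in \mathcal{D}(B)$ one has $\mathbb{T}^+_t x \in \mathcal{D}(B)$ with $B\mathbb{T}^+_t x = \mathbb{T}^+_t Bx$, and, iterating, for $x \in \mathcal{D}(B^2)$ one has $\mathbb{T}^+_t x \in \mathcal{D}(B^2)$ with $B^2 \mathbb{T}^+_t x = \mathbb{T}^+_t B^2 x$. Equipping $\mathcal{D}(A) = \mathcal{D}(B)$ with $\|x\|_B^2 = \|x\|^2 + \|Bx\|^2$, this gives
$$
\|\mathbb{T}^+_t x\|_B^2 = \|\mathbb{T}^+_t x\|^2 + \|\mathbb{T}^+_t Bx\|^2 \le \|x\|^2 + \|Bx\|^2 = \|x\|_B^2 ,
$$
using only $\|\mathbb{T}^+_t\|_{\mathcal{L}(X)} \le 1$. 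Thus $\mathbb{T}^+$ restricts to a semigroup of contractions on $\mathcal{D}(A)$; the identical computation with the norm $\|x\|^2 + \|Bx\|^2 + \|B^2 x\|^2$ and the relation $B^2\mathbb{T}^+_t x = \mathbb{T}^+_t B^2 x$ yields contractivity on $\mathcal{D}(A^2)$. It then remains to record the strong continuity and the generator, both inherited from $X$: for $x \in \mathcal{D}(A^2)$, continuity in the graph norm of $B^2$ amounts to $\mathbb{T}^+_t x \to x$, $\mathbb{T}^+_t Bx \to Bx$ and $\mathbb{T}^+_t B^2 x \to B^2 x$ as $t \to 0^+$, which all follow by applying the strong continuity of $\mathbb{T}^+$ on $X$ to the vectors $x, Bx, B^2 x \in X$; the semigroup law is immediate, and the generator is in each case the part of $B$ in the corresponding space. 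The same reasoning applies verbatim to $A^- = -A - C^*C$ (replacing $A$ by the skew-adjoint operator $-A$), which is the companion case used in the paper.

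The step requiring the most care — and the reason the statement is not completely automatic — is the choice of norm: contractivity is obtained for the graph norm built from the generator $B = A - C^*C$ itself, not for the bare graph norm of $A$. One must therefore justify the norm equivalences of the first paragraph in order to phrase the conclusion as a contraction semigroup ``on $\mathcal{D}(A)$ and $\mathcal{D}(A^2)$''. This detour is genuinely needed: a direct energy estimate for $\tfrac{d}{dt}\|Az\|^2$ along $\dot z = Bz$ leaves the term $2\,\mathrm{Re}\langle C^*Cz, A^2 z\rangle$, which is not signed in general, whereas in the generator's own graph norm the dissipative contribution telescopes and the estimate closes.
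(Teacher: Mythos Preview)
Your proof is correct and follows essentially the same route as the paper: identify $\mathcal{D}(A-C^*C)=\mathcal{D}(A)$ and $\mathcal{D}((A-C^*C)^2)=\mathcal{D}(A^2)$, then pass to the restriction of the semigroup. The only difference is packaging: the paper dispatches the restriction step by citing \cite[Proposition~2.10.4]{TucWei09}, whereas you prove it by hand via the commutation $B\mathbb{T}^+_t=\mathbb{T}^+_tB$ and the resulting graph-norm estimate. Your explicit remark that contractivity is obtained in the $B$-graph norm (equivalent but not identical to the $A$-graph norm) is a point the paper leaves implicit in the cited proposition; it is worth keeping, since the applications of the lemma in the paper only require uniform boundedness in any equivalent norm on $\mathcal{D}(A)$ and $\mathcal{D}(A^2)$.
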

\begin{proof}
As $C\in \mathcal{L}(X,Y)$ is bounded, we clearly have $\D=\mathcal{D}\left(A-C^*C\right)$. Moreover, $C^*C \in \mathcal{L}\left(\D\right)$ implies that $\Ds=\mathcal{D}\left(\left(A-C^*C\right)^2\right)$. The result follows then from \cite[Proposition 2.10.4]{TucWei09}. 
\end{proof}
\begin{lemma}\label{lem-annexe2}
Given $q_0 \in \Ds$ and $F \in C\left([0,\tau],\Ds\right) \cap C^1\left([0,\tau],\D\right)$, let $q$ denote the solution of the initial value problem
$$
\left\{\begin{array}{ll}
\dot{q}(t) &= A q(t) - C^*C q(t) + F(t), \quad t \in (0,\tau), \\
q(0) &= q_0.
\end{array}\right.
$$
Then, we have the following statements
\begin{enumerate}
\item Regularity:
\begin{equation}\label{eq-reg-q}
q \in C\left([0,\tau],\Ds\right) \cap C^1\left([0,\tau],\D\right) \cap C^2\left([0,\tau],X\right),
\end{equation}
\item Bound for $q$:
\begin{equation}\label{eq-q-norm-1-2}
\| q(t) \|_\alpha \leq \| q_0 \|_\alpha + t \| F \|_{\alpha,\infty}, \qquad  \mbox{ for } \alpha = 0, 1, 2,
\end{equation}
\item Bound for $\dot q$ : there exists $M>0$ such that
\begin{equation}\label{eq-dotq-norm-1}
\| \dot{q}(t) \|_\alpha \leq M \left( \| q_0 \|_{\alpha+1} + t \| F \|_{\alpha+1,\infty} \right) + \| F \|_{\alpha,\infty}, \qquad \mbox{ for } \alpha = 0, 1,
\end{equation}
\end{enumerate}
where $\dsp \| F \|_{\alpha,\infty} = \sup_{t\in[0,\tau]} \| F(t) \|_\alpha$.
\end{lemma}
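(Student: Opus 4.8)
The plan is to reduce everything to the contraction property established in Lemma~\ref{lem-contraction} together with the variation of constants formula. Write $\A = A - C^*C$ for the generator of the problem. As observed in the proof of Lemma~\ref{lem-contraction}, the boundedness of $C^*C$ on $\D$ and $\Ds$ gives $\mathcal D(\A) = \D$ and $\mathcal D(\A^2) = \Ds$, and $\A$ generates a $C_0$-semigroup $S$ that is simultaneously a contraction on $X$, on $\D$ and on $\Ds$. Since $q_0 \in \Ds$ and $F \in C([0,\tau],\Ds) \cap C^1([0,\tau],\D)$, the solution is given by the Duhamel formula $q(t) = S(t)q_0 + \int_0^t S(t-s)F(s)\,ds$, and all subsequent estimates will be read off from this representation.

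For the regularity statement \eqref{eq-reg-q}, I would first invoke the standard theory of inhomogeneous Cauchy problems (see, e.g., \cite{TucWei09}): since $q_0 \in \mathcal D(\A)$ and $F \in C^1([0,\tau],X)$, the function $q$ is a classical solution, $q \in C^1([0,\tau],X)\cap C([0,\tau],\D)$. I would then bootstrap by applying the same result to $v = \dot q$, which solves $\dot v = \A v + \dot F$ with initial value $v(0) = \A q_0 + F(0) \in \D$ (using $q_0 \in \Ds$ and $F(0)\in\D$) and source $\dot F \in C([0,\tau],\D)$; this yields $v \in C^1([0,\tau],X)\cap C([0,\tau],\D)$, i.e. $q \in C^2([0,\tau],X)\cap C^1([0,\tau],\D)$. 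The top regularity is then obtained algebraically from the equation itself: $\A q = \dot q - F$ lies in $C([0,\tau],\D)$, hence $q \in C([0,\tau],\Ds)$.

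The bound \eqref{eq-q-norm-1-2} is the immediate payoff of the simultaneous contraction property. Applying $\|\cdot\|_\alpha$ to the Duhamel formula and using that $S(t)$ is a contraction on $\mathcal D(A^\alpha)$ for $\alpha = 0,1,2$ (Lemma~\ref{lem-contraction}, the case $\alpha=0$ being the hypothesis), one gets
\[
\|q(t)\|_\alpha \le \|S(t)q_0\|_\alpha + \int_0^t \|S(t-s)F(s)\|_\alpha\,ds \le \|q_0\|_\alpha + \int_0^t \|F(s)\|_\alpha\,ds,
\]
which is bounded by $\|q_0\|_\alpha + t\|F\|_{\alpha,\infty}$; here the integral is legitimate in $\mathcal D(A^\alpha)$ because $F\in C([0,\tau],\Ds) \subset C([0,\tau],\mathcal D(A^\alpha))$.

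Finally, \eqref{eq-dotq-norm-1} follows directly from the differential equation, with no further energy argument. Taking $\|\cdot\|_\alpha$ in $\dot q = Aq - C^*Cq + F$ and using $\|Aq\|_\alpha \le M\|q\|_{\alpha+1}$ together with $C^*C \in \mathcal L(\mathcal D(A^\alpha))$ for $\alpha = 0,1$, one finds $\|\dot q(t)\|_\alpha \le M\|q(t)\|_{\alpha+1} + M\|q(t)\|_\alpha + \|F(t)\|_\alpha$; substituting the bound \eqref{eq-q-norm-1-2} (at levels $\alpha$ and $\alpha+1$) and absorbing lower order terms via the embeddings $\mathcal D(A^{\alpha+1})\hookrightarrow \mathcal D(A^\alpha)$ yields the claim. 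I expect the only genuinely delicate point to be the regularity bootstrap---specifically checking that the hypotheses on $F$ feed exactly the right classical-solution theorem at each step---whereas the two quantitative estimates are essentially formal once Lemma~\ref{lem-contraction} is in hand.
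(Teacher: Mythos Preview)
Your proof is correct and, for the two quantitative estimates \eqref{eq-q-norm-1-2} and \eqref{eq-dotq-norm-1}, is essentially identical to the paper's: Duhamel combined with the contraction property of Lemma~\ref{lem-contraction} for the first, and reading $\dot q = (A-C^*C)q + F$ pointwise for the second.

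The only difference is in the regularity argument. The paper obtains $q \in C([0,\tau],\Ds)\cap C^1([0,\tau],\D)$ in one stroke by applying the classical-solution theorem (\cite[Theorem~4.1.6]{TucWei09}) with $\D$ as the ambient space---this is legitimate precisely because Lemma~\ref{lem-contraction} says $\A$ generates a contraction semigroup on $\D$---and then gets the $C^2$ regularity in $X$ from the equation. You instead start from the base level in $X$ and bootstrap by differentiating: first $q$ is classical in $X$, then $v=\dot q$ is classical in $X$, and finally $\A q = \dot q - F \in C([0,\tau],\D)$ gives $q \in C([0,\tau],\Ds)$. Both work; the paper's route is shorter because it reuses Lemma~\ref{lem-contraction} as a structural fact (generator on $\D$) rather than only as an estimate, while your bootstrap is more self-contained and does not require viewing $\A$ as a generator on the higher space.
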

\begin{proof} \ 

1. By \cite[Theorem 4.1.6]{TucWei09}, we have $q \in C\left([0,\tau],\Ds\right) \cap C^1\left([0,\tau],\D\right)$. But since $C^*C \in \mathcal{L}\left(\D\right)$ and $F \in C\left([0,\tau],\Ds\right) \cap C^1\left([0,\tau],\D\right)$, we have $$\left(A-C^*C\right)q(t) \in C\left([0,\tau],\D\right) \cap C^1\left([0,\tau],X\right).$$
The last inclusion follows then from the fact that $\dot{q}(t) = \left(A-C^*C\right)q(t)$ in $\D$.

2. By Duhamel's formula, we have 
$$
\begin{array}{lll}
\| q(t) \|_\alpha & =\dsp \Big\| \T_tq_0 + \int_0^t \T_{t-s} F(s) ds \Big\|_\alpha, \\
& \dsp\leq \left\| \T_tq_0 \right\|_\alpha + \int_0^t \left\| \T_{t-s} F(s) \right\|_\alpha ds, \\
& \dsp \leq \| q_0 \|_\alpha + t \| F \|_{\alpha,\infty},
\end{array}
$$
where we have used Lemma \ref{lem-contraction} for the last inequality.

3. Using the estimate \eqref{eq-q-norm-1-2} obtained for $q(t)$ and the continuity of the embeddings $\Ds \hookrightarrow \D \hookrightarrow X$, we easily get
$$
\begin{array}{lll}
\| \dot{q}(t) \|_\alpha & = \| \left(A-C^*C\right)q(t) + F(t) \|_\alpha, \\
& \leq \| q(t) \|_{\alpha+1} + M \| q(t) \|_\alpha + \|F \|_{\alpha,\infty}, \\
& \leq M \left( \| q_0 \|_{\alpha+1} + t \|F \|_{\alpha+1,\infty} \right) + \| F \|_{\alpha,\infty}.
\end{array}
$$
\end{proof}

\bibliographystyle{siam}      
\bibliography{observers}   
\end{document}